\documentclass[11pt,letterpaper]{amsart}

\usepackage{mathrsfs,color,balance,bm,amsmath,amsfonts,amssymb,dsfont,amscd,extarrows,enumerate,verbatim}
\usepackage[all]{xy}
\numberwithin{equation}{section}

\newtheorem{thm}{Theorem}[section]
\newtheorem{cor}[thm]{Corollary}
\newtheorem{lem}[thm]{Lemma}
\newtheorem{remark}[thm]{Remark}
\newtheorem{prop}[thm]{Proposition}

\newcommand{\mr}{\mathbb{R}}

\newcommand{\mc}{\mathbb{C}}

\newcommand{\rw}{\rightarrow}

\DeclareMathOperator{\diam}{diam}

\makeatletter
\def\@settitle{\begin{center}%
  \baselineskip14\p@\relax
  \bfseries
  \uppercasenonmath\@title
  \@title
  \ifx\@subtitle\@empty\else
     \\[1ex]\uppercasenonmath\@subtitle
     \footnotesize\mdseries\@subtitle
  \fi
  \end{center}%
}
\def\subtitle#1{\gdef\@subtitle{#1}}
\def\@subtitle{}
\makeatother

\usepackage[pagebackref=true,colorlinks=true,bookmarksopen,bookmarksnumbered,citecolor=red, linkcolor=blue, urlcolor=cyan]{hyperref}

\newcommand{\clubtag}{\hyperref[ineq:club]{\ensuremath{\clubsuit}}}

\allowdisplaybreaks[3]
\begin{document}

\title[Estimates of heat kernels and Sobolev-type inequalities]
{Estimates of heat kernels and Sobolev-type inequalities for twisted differential forms on compact K\"ahler manifolds}

\author[F. Deng]{Fusheng Deng}
\address{Fusheng Deng: \ School of Mathematical Sciences, University of Chinese Academy of Sciences \\ Beijing 100049, P. R. China}
\email{fshdeng@ucas.ac.cn}

\author[G. Huang]{Gang Huang }
\address{Gang Huang: \ School of Mathematical Sciences, University of Chinese Academy of Sciences \\ Beijing 100049, P. R. China}
\email{huanggang21@mails.ucas.ac.cn}

\author[X. Qin]{Xiangsen Qin}
\address{Xiangsen Qin: \ School of Mathematical Sciences, University of Chinese Academy of Sciences \\ Beijing 100049, P. R. China}
\email{qinxiangsen19@mails.ucas.ac.cn}
\maketitle
\begin{abstract}
    The main goal of this paper is to   generalize  the Sobolev-type inequalities given by Guo-Phong-Song-Sturm and Guedj-Tô 
from the case of functions to the framework of twisted differential forms.
To this end, we establish certain estimates of heat kernels for differential forms with values in holomorphic vector bundles
      over compact K\"ahler manifolds. As applications of these estimates,
      we also prove a vanishing theorem and give certain $L^{q,p}$-estimates for the $\bar\partial$-operator on  twisted differential forms. 
    %Furthermore, we get an improved $L^2$-estimate of H\"ormander on a strictly pseudoconvex open subset of a K\"ahler manifold.
\end{abstract}
%\thanks{(*) The second author and the third author are both corresponding authors.}

\maketitle
\tableofcontents
\section{Introduction} 
   In the remarkable recent  works \cite{GPSS24} and \cite{GT24} (see also \cite{GPSS22}, \cite{GPSS23}, and \cite{GPS24}),
   the authors establish some Sobolev-type inequalities for functions associated with a family of K\"ahler metrics satisfying certain entropy bounds on compact K\"ahler manifolds. In the present paper, we try to generalize these inequalities to the framework of differential forms with values in Hermitian holomorphic vector bundles.
    However, in attempting to  follow the approaches in \cite{GPSS24} and \cite{GT24}, one will inevitably encounter  the problem of how to define the Monge-Amp\'ere operator for
     differential forms or sections of a holomorphic vector bundle.  
     Our strategy to circumvent this difficulty is trying to produce a heat kernel estimate for twisted differential forms 
     from a Sobolev-type inequality for functions.\\
    \par  To state the  result, we first introduce the context we are studying.
  Let $M$ be a compact K\"ahler manifold of complex dimension $n$, whose volume is denoted by $|M|$.
  Let $E$ be a Hermitian holomorphic vector bundle of rank $r$ over $M$, $\square_{p,q}$ be the $\bar\partial$-Laplacian which acts on  $C^2(M,\Lambda^{p,q} T^*M\otimes E)$ for some fixed $p,q\in\{0,\cdots,n\},$ and let $\nabla $ be the Chern connection on $\Lambda^{p,q}T^*M\otimes E$.
 Then the Weitzenb\"ock curvature operator is given by 
 $$\mathfrak{Ric}_{p,q}^E:=2\square_{p,q}-\nabla^*\nabla.$$
 We denote by $H_{p,q}(t,x,y)$ the heat kernel associated to $\square_{p,q}$.
 Let $b_{p,q}:=\dim \mathcal{H}_{p,q}(M,E)$ be the dimension of the space $\mathcal{H}_{p,q}(M,E)$ of harmonic $(p,q)$-forms on $M$ with values in $E$. 
 %and $\phi_1,\cdots,\phi_{b_{p,q}}$ be an orthonormal basis of $\mathcal{H}_{p,q}(M,E)$.
 Then the first main result can be stated as follows:
 \begin{thm}\label{thm: heat kernel}
  With the above notations, and assume that there are constants $\alpha>1,\ \beta>0,\gamma\geq 0$
    such that the following Sobolev-type inequality for functions is satisfied:
\begin{equation}\label{ineq:club}
\left( \int_{M} |f|^{2\alpha} \right)^{\frac{1}{\alpha}} \leq\beta\int_{M} |\nabla f|^{2} +\gamma\int_{M} |f|^2,\ \forall f\in C^\infty(M)
\tag{\clubtag}.
\end{equation}
We also assume $\mathfrak{Ric}_{p,q}^E\geq -K$ for some constant $K\geq 0$.
Then the following estimates hold.    
   \begin{itemize}
     \item[(i)] There is a constant $C:=C(\alpha)>0$ such that 
     $$|H_{p,q}(t,x,y)|\leq C\beta^{\frac{\alpha}{\alpha-1}}e^{Kt+\frac{\gamma}{\beta}t}t^{\frac{\alpha}{1-\alpha}}e^{-\frac{d(x,y)^2}{9t}}$$
     for all $(t,x,y)\in\mr_+\times M\times M.$
     \item[(ii)] There is a constant $C:=C(\alpha)>0$ such that 
      
      $$\left|H_{p,q}(t,x,y)-\sum_{k=1}^{b_{p,q}}\phi_k(x)\otimes \phi_k(y)\right|\leq C
      r\beta^{\frac{2\alpha}{\alpha-1}}|M|e^{\left(\frac{K}{\mu_1}+\frac{\gamma}{\beta\mu_1}\right)
   \frac{2\alpha}{\alpha-1}}t^{\frac{2\alpha}{1-\alpha}}$$
   for all $(t,x,y)\in\mr_+\times M\times M,$
      where $\mu_1$ is the first nonzero eigenvalue of $\square_{p,q}$, and $\phi_1,\cdots,\phi_{b_{p,q}}$ is an orthonormal basis of $\mathcal{H}_{p,q}(M,E)$.
     \item[(iii)]  If $\mathfrak{Ric}_{p,q+1}^E\geq -K_+,\ \mathfrak{Ric}_{p,q-1}^E\geq -K_{-}$ for some constants $K_+,K_{-}\geq 0$.
     Then  
     \begin{align*}
     &\quad \max\{|\bar\partial_y H_{p,q}(t,x,y)|,|\bar\partial_y^* H_{p,q}(t,x,y)|\}\\
     &  \leq r\beta^{\frac{2\alpha}{\alpha-1}}|M|e^{\left(\frac{K+K_{+}+K_{-}}{\mu_1}+\frac{\gamma}{\beta\mu_1}\right)
   \frac{2\alpha}{\alpha-1}}t^{\frac{2\alpha}{1-\alpha}-\frac{1}{2}}
   \end{align*}
   for all $(t,x,y)\in\mr_+\times M\times M$.
   \item[(iv)] There is a constant $C:=C(\alpha,\beta,\gamma,K,K_+,K_{-})>0$ such that 
   $$\max\{|\bar\partial_y H_{p,q}(t,x,y)|^2,|\bar\partial_y^* H_{p,q}(t,x,y)|^2\}\leq Cn|B(y,\sqrt{t})|t^{\frac{4\alpha-1}{1-\alpha}}e^{-\frac{d(x,y)^2}{18t}}$$
    for all $x,y\in M,\ 0<t\leq 1$, where $B(y,\sqrt{t})$ is the geodesic ball in $M$ with center  $y$ and radius $\sqrt{t}$.
   \end{itemize}
  \end{thm}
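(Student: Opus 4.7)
The plan is to reduce the twisted heat-form problem to a scalar heat-function subsolution via a Weitzenb\"ock--Kato device, and then import the full Nash--Moser/Davies machinery driven by \eqref{ineq:club}. Concretely, for $u_t = e^{-t\square_{p,q}}u_0$ the Weitzenb\"ock identity $\mathfrak{Ric}_{p,q}^E = 2\square_{p,q} - \nabla^*\nabla$, the Bochner formula $\Delta|u|^2 = 2|\nabla u|^2 - 2\operatorname{Re}\langle \nabla^*\nabla u,u\rangle$, and Kato's inequality $|\nabla|u|| \le |\nabla u|$ combine to yield the scalar distributional inequality
\[ \bigl(\partial_t - \tfrac{1}{2}\Delta\bigr)|u_t| \le \tfrac{K}{2}\,|u_t|. \]
Thus $e^{-Kt/2}|u_t|$ is a nonnegative subsolution of a scalar heat equation on $M$, and \eqref{ineq:club} controls it in the Moser/De Giorgi sense. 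Every subsequent step exploits this single reduction.

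\textbf{Part (i).} I would apply Davies' weighted perturbation method. Fixing $y \in M$, pick a bounded $1$-Lipschitz function $\rho$ on $M$ (a truncation of $d(\cdot,y)$), introduce the weighted energy $J_\lambda(t) := \int_M e^{-2\lambda\rho}|u_t|^2\,dV$, differentiate in $t$, and apply \eqref{ineq:club} to $e^{-\lambda\rho}|u_t|$. A Moser iteration then yields a weighted ultracontractive estimate of the form
\[ \|e^{\lambda\rho(x)}H_{p,q}(t,x,\cdot)\,e^{-\lambda\rho(\cdot)}\|_{L^1\to L^\infty} \le C(\alpha)\beta^{\alpha/(\alpha-1)}t^{-\alpha/(\alpha-1)}e^{(K+\gamma/\beta)t+\lambda^2 t}. \]
Taking $\rho$ to approximate $d(\cdot, y)$ and optimizing $\lambda \sim d(x,y)/t$ produces the Gaussian factor $e^{-d(x,y)^2/(9t)}$.

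\textbf{Parts (ii) and (iii).} For (ii), use the spectral gap $\|e^{-t\square_{p,q}}(I-\Pi)\|_{L^2\to L^2}\le e^{-\mu_1 t}$, where $\Pi(x,y)=\sum_k\phi_k(x)\otimes\phi_k(y)$ is the projector onto $\mathcal{H}_{p,q}(M,E)$. Setting $t_0 := 2\alpha/((\alpha-1)\mu_1)$ and writing
\[ H_{p,q}(t)-\Pi = \bigl(H_{p,q}(t_0)-\Pi\bigr)\circ e^{-(t-2t_0)\square_{p,q}}(I-\Pi)\circ \bigl(H_{p,q}(t_0)-\Pi\bigr), \]
bounding the two outer factors in $L^\infty$ via (i) at time $t_0$ and the middle factor by the spectral gap via Cauchy--Schwarz converts the $e^{Kt}$-growth into the $t$-independent factor $e^{(K/\mu_1+\gamma/(\beta\mu_1))\cdot 2\alpha/(\alpha-1)}$, at the cost of two copies of the polynomial factor. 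For (iii), exploit the commutations $\bar\partial\,\square_{p,q} = \square_{p,q+1}\bar\partial$ and $\bar\partial^*\square_{p,q} = \square_{p,q-1}\bar\partial^*$ together with the spectral-theoretic energy bound $\|\bar\partial e^{-s\square}\|_{L^2\to L^2} \le (2s)^{-1/2}$. Factoring $e^{-t\square_{p,q}}$ into halves, inserting $\bar\partial$, and applying (ii) on the neighbouring bidegrees (invoking the hypotheses on $K_\pm$) produces the extra $t^{-1/2}$.

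\textbf{Part (iv) and main obstacle.} The small-time Gaussian bound for $\bar\partial_y H_{p,q}$ merges the Davies weighted method of (i) with the bidegree commutation of (iii). I would decompose
\[ \bar\partial\,e^{-t\square_{p,q}} = e^{-(t/3)\square_{p,q+1}}\circ\bigl[\bar\partial\,e^{-(t/3)\square_{p,q}}\bigr]\circ e^{-(t/3)\square_{p,q}}, \]
run weighted $L^p\to L^q$ estimates on the three factors, and optimize the weight. The constant $1/18$ in the Gaussian (versus $1/9$ in (i)) reflects the loss from composing two weighted semigroup estimates; the volume term $|B(y,\sqrt t)|$ arises from interpolating the on-diagonal small-time bound with Kato--Weitzenb\"ock applied on all three bidegrees $(p,q-1), (p,q), (p,q+1)$. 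The main obstacle is precisely the bookkeeping in this step: propagating the Gaussian weight through three semigroup factors while simultaneously absorbing the $t^{-1/2}$ energy cost and the curvature contributions from adjacent bidegrees, without overshooting either the polynomial or the exponential factors in the stated estimate.
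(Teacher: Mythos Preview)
Your outline for (i) coincides with the paper's: the Kato--Weitzenb\"ock domination $|H_{p,q}(t,x,y)|\le e^{Kt}H(t,x,y)$ is precisely \cite[Theorem 4.3]{LX10}, and the paper then runs Davies' weighted method on the scalar kernel $H$ driven by \eqref{ineq:club}, exactly as you propose.

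For (ii) and (iii) you take a genuinely different route. The paper does not factor the semigroup; instead it sums the eigen-expansion term by term. It first proves, via Moser iteration from \eqref{ineq:club}, sup-norm bounds for eigensections, namely $\sup_M|\phi_{b_{p,q}+k}|^2\le C(\beta\mu_k+\beta K+\gamma)^{\alpha/(\alpha-1)}$ (Lemma~\ref{lem:eigensection}) and the analogous $C^1$ bound $\sup_M|\bar\partial\phi_{b_{p,q}+k}|^2\le C\mu_k(\beta\mu_k+\beta K_0+\gamma)^{\alpha/(\alpha-1)}$ (Lemma~\ref{lem:eigensection derivative}); it then extracts from the on-diagonal bound in (i) a Weyl-type lower bound $\mu_k\ge c\beta^{-1}(r|M|e^{K/\mu_1+\gamma/(\beta\mu_1)})^{(1-\alpha)/\alpha}k^{(\alpha-1)/\alpha}$, and finally estimates $\sum_k e^{-\mu_kt}\mu_k^{\alpha/(\alpha-1)+\epsilon}$ by an explicit integral. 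Your operator-theoretic factorization is cleaner, but note two things: it only applies for $t\ge 2t_0$, and in the complementary regime you must still bound $|\Pi(x,y)|$ pointwise, which forces you through the same Moser mean-value inequality for harmonic sections. So both approaches ultimately rest on a Moser-type eigensection estimate; the paper makes this explicit and then does series summation, whereas you hide it inside an $L^2\to L^\infty$ bound. The paper's route also delivers the exact constant $r\beta^{2\alpha/(\alpha-1)}|M|$ stated in (ii), which emerges from the eigenvalue counting and is not transparent in your factorization.

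Part (iv) has a genuine gap. The stated bound carries the \emph{local} factor $|B(y,\sqrt t)|$, and a global three-factor semigroup decomposition cannot produce it: weighted $L^p\to L^q$ operator norms see only the Sobolev data $(\alpha,\beta,\gamma)$, never the volume of a ball around a specific point $y$. Concretely, your decomposition yields at best $|\bar\partial_y H_{p,q}|^2\lesssim t^{-(3\alpha-1)/(\alpha-1)}e^{-d^2/(ct)}$, which differs from the target $|B(y,\sqrt t)|\,t^{-(4\alpha-1)/(\alpha-1)}e^{-d^2/(18t)}$ by a factor $|B(y,\sqrt t)|\,t^{-\alpha/(\alpha-1)}$ that you have no mechanism to generate (``interpolating the on-diagonal bound with Kato--Weitzenb\"ock'' does not do this: the on-diagonal bound from (i) is $t^{-\alpha/(\alpha-1)}$, not a ball volume). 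The paper's missing ingredient is a \emph{local} parabolic $C^1$-estimate (Lemma~\ref{lem:  the solution of heat equation}): for any solution of $(\partial_t+\square_{p,q})\phi=0$ and any center $O$,
\[
\sup_{[63T/64,\,T]\times B(O,\sqrt T/8)}\bigl(|\bar\partial\phi|^2+|\bar\partial^*\phi|^2\bigr)
\;\le\; C\,n\,|B(O,\sqrt T/2)|\,(KT+1)\Bigl(K_0+1+\tfrac{\gamma}{\beta}+\tfrac{1}{T}\Bigr)^{\frac{2\alpha-1}{\alpha-1}}\sup_{[3T/4,\,T]\times B(O,\sqrt T/2)}|\phi|^2,
\]
obtained by running a spatially cut-off Moser iteration on the subsolution $v=|\bar\partial\phi|^2+|\bar\partial^*\phi|^2$ over nested parabolic cylinders (this is where $|B(O,\sqrt T/2)|$ enters, as the measure of the base of the cylinder), followed by an integrated Bochner energy inequality that controls $\iint v$ by $\sup|\phi|^2$. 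Part (iv) then follows by taking $\phi(s,z)=H_{p,q}(s,x,z)$, $O=y$, $T=t$, and feeding (i) into the right-hand side. This local parabolic Moser step is the idea missing from your plan.
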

 \indent When $M$ is a real manifold and $E$ is trivial, $C^0$- and $C^1$-estimates for the heat kernel similar to ours have been obtained in \cite{WZ12} or \cite{BDG23}. However, how their constants depend on the geometric quantities of $M$ differs from that of ours.  For $(p,q)=(n,0)$ and $E$ is a trivial line bundle, \cite{LZZ21} considered $C^0$-estimates and covariant derivative estimates
with a exponential term depending on the bounds of the Ricci curvature of $M$. For general complete K\"ahler manifolds, $L^p$-estimates for the heat kernels of twisted forms under the assumption $\mathfrak{Ric}_{p,q}^E>\rho$ were given in 
 in \cite{LX10}, where $\rho$ is a positive constant. Following the same argument of the proof for Part (i) of Theorem \ref{thm: heat kernel},
 one can see that (i) of Theorem \ref{thm: heat kernel} remains valid if 
 $M$ is replaced by a general complete Kähler manifold satisfying the Sobolev-type inequality (\clubtag).
 \\

 %Note that in 
 %Theorem \ref{thm: heat kernel}, we don't provide estimates for covariant derivatives of the heat kernels
 %as this would introduce additional  curvature terms not controlled by our assumptions (since $\square_{p,q}\nabla\neq \nabla\square_{p,q}$).\\ 
 \indent We now outline the main ideas behind the proof of  Theorem  \ref{thm: heat kernel}.
 Utilizing ideas from  \cite{D90} and \cite{SC02} (see also \cite{Z11}), one can show that the Sobolev-type inequality (\clubtag) imply a Gaussian-type upper bound
 for the heat kernel of functions, then yielding Part (i) by \cite[Theorem 4.3]{LX10}. The proof of Part (ii) follows the methods in \cite{WZ12} and \cite{LZZ21}, involving  estimates for  the eigenvalues and eigensections of $\square_{p,q}$.
 The proof of (iii) is motivated by the proof of \cite[Theorem 3.1]{LZZ21} and relies crucially on certain $C^1$-estimates for solutions to the heat equation associated with $\square_{p,q}$, established in Lemma \ref{lem:  the solution of heat equation}.\\
 \indent Next we present several consequences of  Theorem \ref{thm: heat kernel}.\\
 \indent The relationship between the heat kernel and the Green form (the Schwarz kernel of the Green operator for $\square_{p,q}$) leads to the following estimates for the Green form $G_{p,q}:$ 
 \begin{cor}\label{cor:green form}
 Under the same assumptions and notations of Theorem \ref{thm: heat kernel}.  Let $G_{p,q}$ be the Schwarz kernel of the Green operator for $\square_{p,q}$,
   %acts on $C^2(\overline\Omega,\Lambda^pT^*M\otimes E)$,
   then we have the following estimates:   
   \begin{itemize}
     \item[(i)]  There is a constant $C:=C(\alpha,\beta,\gamma,\mu_1,K,|M|)>0$ such that 
     $$|G_{p,q}(x,y)|\leq Cd(x,y)^{\frac{2}{1-\alpha}}+C,\ \forall (x,y)\in M\times M.$$
     \item[(ii)] Suppose the Ricci curvature of $M$ is bounded below by $-(2n-1)\theta$ for some constant $\theta\geq 0$,
     and  assume $\alpha$ satisfies 
     $$\frac{4\alpha-1}{\alpha-1}-n-2> 0,$$
      then there is a constant 
      $$C:=C(\alpha,\beta,\gamma,r,n,\mu_1,\theta,K,K_+,K_{-},\diam(M))>0$$
      such that for all $x,y\in M$, we have 
      $$ \max\left\{|\bar\partial_yG_{p,q}(x,y)|,|\bar\partial^{*}_yG_{p,q}(x,y)|\right\}\leq Cd(x,y)^{\frac{2\alpha+1}{1-\alpha}+n}.$$
   \end{itemize} 
     \end{cor}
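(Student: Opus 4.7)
The plan is to recover the Green form from the heat kernel by the standard identity
\begin{equation*}
G_{p,q}(x,y) = \int_0^\infty \Bigl( H_{p,q}(t,x,y) - \Phi(x,y) \Bigr)\, dt, \qquad \Phi(x,y) := \sum_{k=1}^{b_{p,q}} \phi_k(x) \otimes \phi_k(y),
\end{equation*}
which is valid because $\int_0^\infty e^{-t\square_{p,q}}(I - P_{\mathcal{H}_{p,q}})\, dt = \square_{p,q}^{-1}(I - P_{\mathcal{H}_{p,q}})$ on the orthogonal complement of the harmonic forms, and then to split the $t$-integral at $t=1$. In both parts the long-time range $[1,\infty)$ contributes at most a constant: for (i) part (ii) of Theorem \ref{thm: heat kernel} provides a bound by $C t^{2\alpha/(1-\alpha)}$, integrable since $\alpha > 1$ forces $2\alpha/(1-\alpha) < -2$, and for (ii) part (iii) gives $C t^{2\alpha/(1-\alpha)-1/2}$, likewise integrable.

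For part (i), the short-time piece is estimated by part (i) of the theorem:
\begin{equation*}
\int_0^1 |H_{p,q}(t,x,y)|\, dt \leq C \int_0^1 t^{\alpha/(1-\alpha)} e^{-d(x,y)^2/(9t)}\, dt,
\end{equation*}
and the substitution $s = d(x,y)^2/(9t)$ transforms this into
\begin{equation*}
\bigl(d(x,y)^2/9\bigr)^{1/(1-\alpha)} \int_{d(x,y)^2/9}^\infty s^{(2-\alpha)/(\alpha-1)} e^{-s}\, ds,
\end{equation*}
in which the $s$-integral is dominated uniformly by $\Gamma(1/(\alpha-1))$ since $(2-\alpha)/(\alpha-1) > -1$. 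This produces the claimed $d(x,y)^{2/(1-\alpha)}$ term. The harmonic kernel $\Phi$ is uniformly bounded: Cauchy-Schwarz yields $|\Phi(x,y)|^2 \leq |\Phi(x,x)|\,|\Phi(y,y)|$, and $|\Phi(x,x)|$ is controlled using parts (i) and (ii) of Theorem \ref{thm: heat kernel} at $t=1,\ y=x$; hence $\int_0^1 |\Phi(x,y)|\, dt$ is bounded, giving the residual constant $+C$ in (i).

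For part (ii), we differentiate in $y$ under the integral sign; the harmonic term drops out because the elements of $\mathcal{H}_{p,q}(M,E)$ are annihilated by both $\bar\partial$ and $\bar\partial^*$. On $[0,1]$, combining part (iv) with the Bishop-Gromov volume estimate $|B(y,\sqrt{t})| \leq C t^n$, valid under the Ricci lower bound $-(2n-1)\theta$ on the real $2n$-manifold, produces
\begin{equation*}
|\bar\partial_y H_{p,q}(t,x,y)| \leq C\, t^\lambda e^{-d(x,y)^2/(36t)}, \qquad \lambda := \tfrac{1}{2}\Bigl(n + \tfrac{4\alpha-1}{1-\alpha}\Bigr),
\end{equation*}
and the same change of variables as before yields $C d(x,y)^{2(\lambda+1)}$ times a finite incomplete-Gamma constant, provided $\lambda + 1 < 0$. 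A direct computation checks the algebraic identity $2(\lambda+1) = n + (2\alpha+1)/(1-\alpha)$ and verifies that the hypothesis $(4\alpha - 1)/(\alpha - 1) > n + 2$ is precisely $\lambda + 1 < 0$. Since the resulting exponent of $d(x,y)$ is then negative, the bounded long-time contribution is absorbed into $C d(x,y)^{(2\alpha+1)/(1-\alpha)+n}$, and the same argument applied to $\bar\partial_y^*$ covers the second quantity.

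The main technical delicacy is the interaction between the singular factor $t^\lambda$ and the Gaussian decay in the short-time integral for part (ii): one must check that the Ricci hypothesis supplies enough of a power of $t$ through Bishop-Gromov to make the substitution converge to a finite $\Gamma$-type constant, and this is exactly the role of the constraint $(4\alpha - 1)/(\alpha - 1) > n + 2$. All remaining steps are routine applications of Theorem \ref{thm: heat kernel} together with elementary volume comparison.
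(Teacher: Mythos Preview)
Your proposal is correct and follows essentially the same route as the paper: both arguments express $G_{p,q}$ as $\int_0^\infty (H_{p,q}-\Phi)\,dt$, split at $t=1$, invoke parts (i)--(ii) of Theorem \ref{thm: heat kernel} for (i) and parts (iii)--(iv) together with Bishop--Gromov for (ii), and reduce the short-time integral to a Gamma-type constant via the substitution $s=d(x,y)^2/(ct)$. Your treatment is in fact slightly more explicit than the paper's in two places---you spell out why the harmonic kernel $\Phi$ is uniformly bounded and why it drops out under $\bar\partial_y$, and you note that the bounded long-time contribution in (ii) is absorbed because the exponent $(2\alpha+1)/(1-\alpha)+n$ is negative---whereas the paper leaves these points implicit.
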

  \indent When $M$ is a real manifold and $E$ is a trivial line bundle, some $L^p$-estimates of Green forms and their derivatives were given in \cite{SC95},
  but the constants remain unspecified.
  By \cite{D90} (see also \cite{L12} and \cite{Z11}), diagonal heat kernel estimates for functions imply Sobolev-type inequalities for functions.
  Employing similar ideas, we derive the following Sobolev-type inequality for forms from Theorem \ref{thm: heat kernel}.
   This inequality serves as our key tool for constructing Sobolev-type inequalities for twisted differential forms 
   on compact K\"ahler manifolds within the framework of \cite{GPSS23} and 
  \cite{GT24}.
  %By \cite[Theorem 1.2]{RM10}, the following Sobolev-type inequality is a consequence of (ii) of .
 \begin{cor}\label{cor:good sobolev}
 Under the same assumptions and notations of Theorem \ref{thm: heat kernel}. 
 For any $1\leq \ell<(2\alpha)/(\alpha-1)$, set 
 $$-m:=\frac{1}{2}+\frac{\alpha}{(1-\alpha)\ell},$$
 and suppose $1\leq k<\infty$ satisfies
 $$
 \begin{cases}
                             k<\ell/(2m)+\ell, & \mbox{if } \ell=1, \\
                             k\leq \ell/(2m)+\ell, & \mbox{if }  1<\ell<(2\alpha)/(\alpha-1). \\
    \end{cases}
 $$ 
 Then there is a constant  $C:=C(k,\ell,\alpha,\beta,\gamma,r,\mu_1,K,|M|)>0$
 such that the following inequality holds:
 $$\left(\int_{M}\left|f-\mathcal{P}f\right|^{k}dV\right)^{1/k}
 \leq C\left(\int_{M}|\square_{p,q}f|^{\ell}dV\right)^{1/\ell}$$
for all 
 $f\in C^2(M,\Lambda^{p,q}T^*M\otimes E)$, and where  $\mathcal{P}\colon L^2(M,\Lambda^{p,q}T^*M\otimes E)\rw \mathcal{H}_{p,q}(M,E)$ denotes the Bergman projection.\\
 \indent In particular, there is a constant $\delta:=\delta(\alpha,\beta,\gamma,r,\mu_1,K,|M|)>0$ such that for all 
 $f\in C^2(M,\Lambda^{p,q}T^*M\otimes E)$, we have 
 $$\delta\left(\int_{M}\left|f-\mathcal{P}f\right|^{2\alpha}dV\right)^{1/\alpha}\leq\int_{M}|\square_{p,q}f|^2=\int_{M}\left(|\bar\partial f|^2+|\bar\partial^* f|^2\right)dV.
 $$
 \end{cor}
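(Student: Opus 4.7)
The strategy is to represent $f-\mathcal{P}f$ through the Green operator and control it with the heat-kernel bounds of Theorem \ref{thm: heat kernel}. Since $\square_{p,q}f\perp \mathcal{H}_{p,q}(M,E)$,
$$f-\mathcal{P}f\;=\;G_{p,q}(\square_{p,q}f)\;=\;\int_0^\infty(H_t-\mathcal{P})(\square_{p,q}f)\,dt,$$
so Minkowski's inequality in $t$ reduces the problem to showing $\int_0^\infty\|H_t-\mathcal{P}\|_{L^\ell\to L^k}\,dt<\infty$ with a constant of the claimed form.

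For $0<t\le 1$ I would use part (i) of Theorem \ref{thm: heat kernel}. Applying H\"older's inequality in $y$ and estimating the Gaussian integral $\int_M e^{-\ell' d(x,y)^2/(9t)}\,dy$ produces
$$\|H_t\|_{L^\ell\to L^\infty}\;\le\;C\,t^{-(m+1/2)},\qquad m+\tfrac12\;=\;\frac{\alpha}{(\alpha-1)\ell},$$
with $\beta^{\alpha/(\alpha-1)}$ and $e^{(K+\gamma/\beta)t}$ absorbed into the constant. H\"older interpolation of the output $L^k$-norm between $L^\infty$ and the trivial $L^\ell\to L^\ell$ bound then gives
$$\|H_t-\mathcal{P}\|_{L^\ell\to L^k}\;\le\;C\,t^{-(m+1/2)(1-\ell/k)}.$$
The hypothesis $k\le \ell/(2m)+\ell$ is designed precisely so that the exponent $(m+1/2)(1-\ell/k)$ is at most $\tfrac12$, guaranteeing integrability near $t=0$; the strict inequality at $\ell=1$ handles the borderline case in which the exponent equals $\tfrac12$ exactly.

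For the tail $t\ge 1$, combining the spectral-gap decay $\|H_t-\mathcal{P}\|_{L^2\to L^2}\le e^{-\mu_1 t}$ with the time-$1$ smoothing from part (i) yields exponential decay of $\|H_t-\mathcal{P}\|_{L^\ell\to L^k}$, so the tail integral is harmless. The main obstacle is the bookkeeping of constants: the Gaussian-integral factor, the H\"older-interpolation constant, and the spectral-gap contribution must all combine so that the final $C$ depends only on $(k,\ell,\alpha,\beta,\gamma,r,\mu_1,K,|M|)$.

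For the "In particular" inequality ($\ell=2$, $k=2\alpha$) the cleanest route bypasses the general machinery: apply (\clubtag) to the scalar $|f-\mathcal{P}f|$ (using Kato's inequality $|\nabla|f-\mathcal{P}f||\le|\nabla(f-\mathcal{P}f)|$), substitute the Weitzenb\"ock identity $\nabla^*\nabla=2\square_{p,q}-\mathfrak{Ric}_{p,q}^E$ to get
$$\|\nabla(f-\mathcal{P}f)\|_{L^2}^2\;\le\; 2(\|\bar\partial f\|_{L^2}^2+\|\bar\partial^* f\|_{L^2}^2)+K\|f-\mathcal{P}f\|_{L^2}^2,$$
and close with the Poincar\'e inequality $\mu_1\|f-\mathcal{P}f\|_{L^2}^2\le\|\bar\partial f\|_{L^2}^2+\|\bar\partial^* f\|_{L^2}^2$.
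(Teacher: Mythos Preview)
Your overall strategy --- writing $f-\mathcal{P}f=\int_0^\infty H_t(\square_{p,q}f)\,dt$ and bounding $\|H_t\|_{L^\ell\to L^k}$ directly, then integrating in $t$ via Minkowski --- is sound and takes a genuinely different route from the paper. The paper instead works with the Riesz potential $\square_{p,q}^{-1/2}=\Gamma(1/2)^{-1}\int_0^\infty t^{-1/2}(H_t-\mathcal P)\,dt$, proves a \emph{weak}-type $(\ell,\ell/(2m)+\ell)$ bound for it by splitting the time integral at an optimally chosen $T$ (Lemma~\ref{lem:weak type}), upgrades to strong type via Marcinkiewicz interpolation (Corollary~\ref{cor:strong type}), and then composes $\square^{-1/2}$ with itself. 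Your direct approach avoids Marcinkiewicz entirely; in fact your method even reaches the endpoint $k=\ell/(2m)+\ell$ when $\ell=1$, since the resulting $t$-exponent is exactly $1/2$ and $\int_0^1 t^{-1/2}\,dt<\infty$ --- so contrary to what you write, the strict inequality at $\ell=1$ is not needed to ``handle the borderline case'' in your argument; it is an artefact of the paper's Marcinkiewicz step.

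There is, however, one real gap in your small-$t$ estimate. Your justification of $\|H_t\|_{L^\ell\to L^\infty}\le Ct^{-(m+1/2)}$ via ``estimating the Gaussian integral $\int_M e^{-\ell' d(x,y)^2/(9t)}\,dy$'' does not go through under the stated hypotheses: the Sobolev inequality~(\clubtag) yields only a volume \emph{lower} bound on balls, and no Ricci lower bound is assumed in Theorem~\ref{thm: heat kernel}, so you have no control on $\int_M e^{-cd^2/t}\,dy$ beyond the trivial bound $|M|$, which gives the wrong power of $t$. The correct route --- and this is exactly what the paper uses inside Lemma~\ref{lem:weak type} --- is to invoke the pointwise domination $|H_{p,q}(t,x,y)|\le e^{Kt}H(t,x,y)$ from \cite{LX10} together with $\int_M H(t,x,y)\,dy=1$, giving $\|H_{p,q}(t,x,\cdot)\|_{L^1}\le e^{Kt}$, and then interpolate
\[
\|H_{p,q}(t,x,\cdot)\|_{L^{\ell'}}\ \le\ \|H_{p,q}(t,x,\cdot)\|_{L^\infty}^{1/\ell}\,\|H_{p,q}(t,x,\cdot)\|_{L^1}^{1-1/\ell}\ \le\ C\,t^{-\alpha/((\alpha-1)\ell)}\ =\ C\,t^{-(m+1/2)}.
\]
With this correction your small-$t$ bound is fine; your large-$t$ treatment via spectral gap plus time-$1$ smoothing (or alternatively part~(ii) of Theorem~\ref{thm: heat kernel}) is correct. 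Your separate shortcut for the ``in particular'' statement --- Kato applied to $|f-\mathcal Pf|$, Weitzenb\"ock, and the Poincar\'e inequality $\mu_1\|f-\mathcal Pf\|_{L^2}^2\le\|\bar\partial f\|_{L^2}^2+\|\bar\partial^*f\|_{L^2}^2$ --- is correct and more direct than deducing it from the general case; the paper does not isolate this route.
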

 \indent By Corollary \ref{cor:Bergman}, the Bergman projection $\mathcal{P}$ is $L^k$-bounded for any $1\leq k\leq \infty$, so 
 under the assumptions of Corollary \ref{cor:good sobolev},  we have
 $$\left(\int_{M}\left|f\right|^{2\alpha}dV\right)^{1/\alpha}
 \leq C\left(\int_{M}|\bar\partial f|^2dV+\int_{M}|\bar\partial^* f|^2dV+\int_{M}|f|^2dV\right).$$
 \indent When $n\geq 2$, it is well known that the Sobolev-type inequality (\clubtag) holds with $\alpha:=n/(n-1)$.
 One need to note that when $\alpha=n/(n-1)$, the restrictions relation on $k,\ell$ naturally arise in the classical Sobolev-type inequality.
  Employing ideas from \cite{DHQ25} (see also \cite[Lemma 2.10, 2.11]{DHQ24} and their proofs) and Corollary \ref{cor:green form}, one can construct the following two Sobolev-type inequalities:
 \begin{cor}\label{cor:weak sobolev}
 Under the same assumptions and notations of Theorem  \ref{thm: heat kernel}. Assume further $n\geq 2$
 and the Ricci curvature of $M$ is bounded below by $-(2n-1)\theta$ for some constant $\theta\geq 0$. 
 Let $1\leq k,\ell\leq \infty$ satisfy 
 $$  \begin{cases}
                             k<\frac{n\ell}{n-\ell}, & \mbox{if } \ell=1,n, \\
                             k\leq \frac{n\ell}{n-\ell}, & \mbox{if }  1<\ell<n. \\
    \end{cases}
 $$
 Then there is a constant  $C:=C(k,\ell,r,n,\mu_1,\theta,K,\diam(M))>0$
 such that the following inequality holds:
 $$\left(\int_{M}\left|f-\mathcal{P}f\right|^{k}\right)^{\frac{1}{k}}
 \leq C\left(\int_{M}\left|\square_{p,q}f\right|^{\ell}\right)^{\frac{1}{\ell}},\ \forall f\in C^2(M,\Lambda^{p,q}T^*M\otimes E).$$
 \end{cor}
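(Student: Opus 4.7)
The strategy is to represent $f-\mathcal{P}f$ through the Green operator and then apply a Hardy--Littlewood--Sobolev (HLS) type inequality on $M$, using as kernel the pointwise Green form bound supplied by Corollary \ref{cor:green form}(i). Since $n\geq 2$, the classical Sobolev inequality provides hypothesis (\clubtag) with $\alpha=n/(n-1)$, and for this value $2/(1-\alpha)=-(2n-2)$. Corollary \ref{cor:green form}(i) thus specializes to
$$|G_{p,q}(x,y)|\leq C\,d(x,y)^{-(2n-2)}+C.$$
By Hodge theory $f-\mathcal{P}f=G\,\square_{p,q}f$, whence
$$|f(x)-(\mathcal{P}f)(x)|\leq \int_M\bigl(C\,d(x,y)^{-(2n-2)}+C\bigr)\,|\square_{p,q}f(y)|\,dV(y).$$

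On the real $2n$-dimensional manifold $M$, the kernel $d(x,y)^{-(2n-2)}$ is of Riesz type of order $2$, so the classical HLS inequality predicts exactly the Sobolev scaling $1/k=1/\ell-1/n$, i.e.\ $k=n\ell/(n-\ell)$, with strong-type boundedness on the open range $1<\ell<n$ and only weak-type bounds at the endpoints $\ell=1$ and $\ell=n$. Weak-type at the endpoints forces the strict inequality $k<n\ell/(n-\ell)$ there (equivalently, $k<n/(n-1)$ when $\ell=1$ and $k<\infty$ when $\ell=n$, the latter obtained via H\"older from any interior $\ell'<n$), matching the two-case dichotomy in the statement. To implement HLS rigorously on $M$ I would invoke the abstract Riemannian kernel inequality already developed by the authors in Lemmas 2.10 and 2.11 of \cite{DHQ24} (refined in \cite{DHQ25}), which handles kernels of the form $C\,d(x,y)^{-\sigma}+C'$ on a closed Riemannian manifold for $0\leq\sigma<\dim_{\mathbb R}M$. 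The condition is satisfied since $2n-2<2n$, and the Ricci lower bound $-(2n-1)\theta$ feeds the Bishop--Gromov volume comparison underpinning those lemmas.

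The main obstacle I anticipate is bookkeeping the dependence of the final constant on the stated parameters $(k,\ell,r,n,\mu_1,\theta,K,\diam(M))$. Corollary \ref{cor:green form}(i) a priori delivers a constant depending on $\alpha,\beta,\gamma,\mu_1,K,|M|$; once $\alpha=n/(n-1)$ is fixed and one invokes the standard compact-manifold Sobolev inequality with constants controlled by $(n,\theta,\diam(M))$, the auxiliary parameters $\beta,\gamma,|M|$ become functions of $(n,\theta,\diam(M))$ and drop out of the explicit list. A subsidiary technical point is the endpoint $\ell=n$, where the sharp HLS conclusion is not a genuine $L^\infty$ bound but only $L^k$ for every finite $k$; this is handled by fixing $k<\infty$ first and then applying the strong-type HLS at a slightly smaller interior exponent $\ell'<n$ with $k<n\ell'/(n-\ell')$, followed by the embedding $L^n\hookrightarrow L^{\ell'}$ on the finite-measure space $(M,dV)$.
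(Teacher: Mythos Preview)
Your proposal is correct and coincides with the paper's own argument: the paper does not spell out a proof of Corollary~\ref{cor:weak sobolev} but simply indicates (in the paragraph preceding the statement) that it follows from Corollary~\ref{cor:green form} together with the Hardy--Littlewood--Sobolev-type kernel lemmas \cite[Lemmas~2.10, 2.11]{DHQ24} and the ideas of \cite{DHQ25}, which is precisely the route you take. Your handling of the exponent $\alpha=n/(n-1)$, the resulting kernel singularity $d(x,y)^{-(2n-2)}$, the endpoint cases, and the reduction of the Sobolev constants $\beta,\gamma,|M|$ to functions of $(n,\theta,\diam(M))$ via Bishop--Gromov are all in line with what the paper has in mind.
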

 \indent We remark in Corollary \ref{cor:weak sobolev}, it is possible that $\ell\geq n$. One need to note that the constants in Corollary \ref{cor:good sobolev} and  \ref{cor:weak sobolev} exhibit different dependencies
 on the parameters.
 \begin{cor}\label{cor:dbar estimate}
 Under the same assumptions and notations of Theorem  \ref{thm: heat kernel}. Assume further $n\geq 2$
 and the Ricci curvature of $M$ is bounded below by $-(2n-1)\theta$ for some constant $\theta\geq 0.$
 Suppose $1\leq k,s,t\leq \infty$ satisfy 
 $$  \begin{cases}
                             k<\frac{2ns}{2n-s}, & \mbox{if } s=1,2n, \\
                             k\leq \frac{2ns}{2n-s}, & \mbox{if }  1<s<2n, \\
    \end{cases}
    \quad 
  \begin{cases}
                             k<\frac{2nt}{2n-t}, & \mbox{if } t=1,2n, \\
                             k\leq \frac{2nt}{2n-t}, & \mbox{if }  1<t<2n. \\
    \end{cases}
 $$
 Then there is a constant  $\delta:=\delta(k,s,t,r,n,\mu_1,\theta,K,K_+,K_{-},\diam(M))>0$
 such that for all $f\in C^2(M,\Lambda^{p,q}T^*M\otimes E)$,  the following inequality holds:
 $$\delta\left(\int_{M}\left|f-\mathcal{P}f\right|^{k}dV\right)^{\frac{1}{k}}
 \leq \left(\int_{M}\left|\bar\partial f\right|^{s}dV\right)^{\frac{1}{s}}
 +\left(\int_{M}\left|\bar\partial^* f\right|^{t}dV\right)^{\frac{1}{t}}.$$
 \end{cor}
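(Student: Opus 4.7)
The strategy is to represent $f-\mathcal{P}f$ as a Riesz potential of order one applied to $\bar\partial f$ and $\bar\partial^* f$, and then invoke a Hardy--Littlewood--Sobolev type estimate on $M$. Starting from the Hodge decomposition
$$f = \mathcal{P}f + \bar\partial \bar\partial^* G_{p,q} f + \bar\partial^* \bar\partial G_{p,q} f,$$
I would use the commutation relations $\bar\partial G_{p,q}=G_{p,q+1}\bar\partial$ and $\bar\partial^* G_{p,q}=G_{p,q-1}\bar\partial^*$ (which follow because $\bar\partial,\bar\partial^*$ commute with $\square$) to rewrite
$$f-\mathcal{P}f = \bar\partial\bigl(G_{p,q-1}\bar\partial^* f\bigr) + \bar\partial^*\bigl(G_{p,q+1}\bar\partial f\bigr).$$
Passing to the Schwartz kernel then gives the pointwise formula
$$f(x)-\mathcal{P}f(x) = \int_M \bar\partial_x G_{p,q-1}(x,y)\cdot\bar\partial^* f(y)\,dV(y) + \int_M \bar\partial^*_x G_{p,q+1}(x,y)\cdot\bar\partial f(y)\,dV(y),$$
where the pairings are the natural fiberwise ones.

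Next I would apply Corollary~\ref{cor:green form}(ii) to $G_{p,q\pm 1}$, combined with the symmetry $G(x,y)=G(y,x)^*$ of the Green form to convert derivatives in the $y$-variable into derivatives in the $x$-variable. Taking the classical Sobolev exponent $\alpha=n/(n-1)$ (which for $n\geq 2$ satisfies \eqref{ineq:club} on any compact K\"ahler $n$-manifold, and which also verifies the hypothesis $\frac{4\alpha-1}{\alpha-1}-n-2>0$ of Corollary~\ref{cor:green form}(ii)), a direct computation yields $\frac{2\alpha+1}{1-\alpha}+n=1-2n$, so
$$|f(x)-\mathcal{P}f(x)| \leq C\int_M d(x,y)^{1-2n}\bigl(|\bar\partial f(y)|+|\bar\partial^* f(y)|\bigr)\,dV(y),$$
for a constant $C$ depending on $n,r,\mu_1,\theta,K,K_+,K_-,\diam(M)$.

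Finally, the integral operator with kernel $d(x,y)^{1-2n}$ is the Riesz potential of order one on the real $2n$-dimensional manifold $M$. Under the Ricci lower bound $-(2n-1)\theta$, Bishop--Gromov volume comparison yields volume doubling on scales up to $\diam(M)$, and the standard Hardy--Littlewood--Sobolev argument (following \cite{DHQ25} and the proofs of Lemmas 2.10--2.11 in \cite{DHQ24}) then shows that this potential is bounded from $L^s(M)$ to $L^k(M)$ precisely under the stated relation between $k$ and $s$, with strict inequality at the endpoints $s\in\{1,2n\}$ and equality admissible for $1<s<2n$. Applying this estimate to $|\bar\partial f|\in L^s$ and $|\bar\partial^* f|\in L^t$ separately and summing gives the desired inequality.

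I expect the main technical obstacle to lie in the endpoint cases of the HLS bound (at $s=1$, $s=2n$, $t=1$, or $t=2n$), where one proves a weak-type inequality and interpolates; the bookkeeping needed to transfer Corollary~\ref{cor:green form}(ii) to the shifted bidegrees $(p,q\pm 1)$—requiring three Weitzenb\"ock lower bounds per bidegree, all already implicit in the standing hypotheses—is a nontrivial but essentially routine verification.
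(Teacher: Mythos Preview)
Your overall strategy---Hodge decomposition, then a kernel representation, then the pointwise bound from Corollary~\ref{cor:green form}(ii), then a Hardy--Littlewood--Sobolev estimate as in \cite{DHQ24,DHQ25}---is exactly what the paper intends. The gap is in your detour through $G_{p,q\pm 1}$. Your claim that the ``three Weitzenb\"ock lower bounds per bidegree'' are ``all already implicit in the standing hypotheses'' is false as stated: applying Corollary~\ref{cor:green form}(ii) at bidegree $(p,q+1)$ requires a lower bound on $\mathfrak{Ric}^E_{p,q+2}$, and at bidegree $(p,q-1)$ a bound on $\mathfrak{Ric}^E_{p,q-2}$; neither is assumed. (One can rescue this by refining Corollary~\ref{cor:green form}(ii) so that the bound on $\bar\partial^*_y G$ alone only uses $K,K_-$ and the bound on $\bar\partial_y G$ alone only uses $K,K_+$---the proofs of Lemmas~\ref{lem:eigensection derivative} and~\ref{lem:  the solution of heat equation} separate cleanly---but you did not say this.) Even with that refinement, the constants you obtain would depend on the first nonzero eigenvalues of $\square_{p,q\pm 1}$, not on $\mu_1=\mu_1^{(p,q)}$ as in the statement.

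The clean fix, and the route the paper has in mind, is to avoid the shift entirely. Write
\[
f-\mathcal{P}f \;=\; G_{p,q}\bar\partial^*(\bar\partial f) \;+\; G_{p,q}\bar\partial(\bar\partial^* f),
\]
pair against a test vector at $x$, use the self-adjointness $G_{p,q}(x,y)^*=G_{p,q}(y,x)$, and integrate by parts in $y$. The resulting kernels are exactly $\bar\partial_y G_{p,q}$ and $\bar\partial^*_y G_{p,q}$, which Corollary~\ref{cor:green form}(ii) bounds by $Cd(x,y)^{1-2n}$ with a constant depending on precisely $\mu_1,K,K_+,K_-$ (and the geometric data). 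Your HLS step then goes through unchanged and gives the stated dependencies.
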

 \indent The above two corollaries also have the corresponding versions when $n=1$
 since in this case, the Sobolev-type inequality (\clubtag) holds for any $2<\alpha<\infty$.
 We omit the precise statements here.\\
  \indent Using Hodge theory, we derive from  Corollary \ref{cor:green form} the following vanishing theorem and $L^{k,s}$-estimates for the $\bar\partial$-operator.
 
 \begin{thm}\label{thm:complex L^p estimate}
    Under the same assumptions and notations of Corollary \ref{cor:dbar estimate} and we further assume 
    $s>1$, $\mathfrak{Ric}_{p,q}^E\geq 0$ and $\mathfrak{Ric}_{p,q}^E>0$ at some point.
     Then there is a constant 
    $$C:=C(k,s,r,n,\mu_1,\theta,K_{-},\diam(M))>0$$ 
    such that 
    for any $\bar\partial$-closed $f\in L^s(M,\Lambda^{p,q}T^*M\otimes E)$, we may find a $u\in L^k(M,\Lambda^{p,q-1}T^*M\otimes E)$ satisfying
    $$\bar\partial u=f,\ \left(\int_{M}\left|u\right|^{k}dV\right)^{\frac{1}{k}}
 \leq C\left(\int_{M}\left|f\right|^{s}dV\right)^{\frac{1}{s}}.$$ 
  Furthermore, when $s=2$, the constant  $C$ can be chosen to depend only on 
  $k,r,n,\mu_1,|M|$ and the Sobolev constants $\beta,\gamma$ when $\alpha=n/(n-1).$
 \end{thm}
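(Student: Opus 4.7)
The plan is to combine a Bochner--Weitzenböck vanishing with the Green-kernel estimate of Corollary \ref{cor:green form}(ii) and a Hardy--Littlewood--Sobolev type inequality to pass from a pointwise kernel bound to an $L^k$--$L^s$ bound on the solution operator.

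First I would establish $\mathcal{H}_{p,q}(M,E)=0$ by the standard Bochner argument. For $\phi$ harmonic, pairing $2\square_{p,q}\phi=\nabla^*\nabla\phi+\mathfrak{Ric}^E_{p,q}\phi$ with $\phi$ and integrating yields $\|\nabla\phi\|_{L^2}^2+\int_M\langle\mathfrak{Ric}^E_{p,q}\phi,\phi\rangle\, dV=0$. The semipositivity of $\mathfrak{Ric}^E_{p,q}$ forces $\nabla\phi\equiv 0$, so $|\phi|$ is constant; the strict positivity at one point then gives $\phi\equiv 0$. Consequently the Bergman projection $\mathcal{P}$ vanishes and the Green operator $G$ satisfies $\square_{p,q}G=I$ on $L^2$, commuting with $\bar\partial$ on its domain.

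For smooth $\bar\partial$-closed $f$ I would take $u:=\bar\partial^*Gf$; the identity $\bar\partial^*\bar\partial Gf=\bar\partial^*G\bar\partial f=0$ gives $\bar\partial u=\square_{p,q}Gf=f$. Through the Schwartz kernel,
\[
u(x)=\int_M \bar\partial^*_x G_{p,q}(x,y)\,f(y)\,dV(y).
\]
Self-adjointness of $\square_{p,q}$ lets me transfer the $y$-derivative estimate of Corollary \ref{cor:green form}(ii) to an $x$-derivative estimate of the same size. With the standard Sobolev exponent $\alpha=n/(n-1)$ the hypothesis $(4\alpha-1)/(\alpha-1)-n-2>0$ reduces to $2n-1>0$ and the exponent $(2\alpha+1)/(1-\alpha)+n$ simplifies to $1-2n$, giving $|\bar\partial^*_x G_{p,q}(x,y)|\le C\, d(x,y)^{1-2n}$.

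This kernel is of Riesz-potential type of order one on the real $2n$-dimensional manifold $M$, so the Hardy--Littlewood--Sobolev inequality (equivalently, a weak-$L^{2n/(2n-1)}$ estimate followed by Marcinkiewicz interpolation) yields $\|u\|_{L^k}\le C\|f\|_{L^s}$ for precisely the pairs $(k,s)$ listed in the theorem, with the strict inequalities at $s=1$ and $s=2n$ reflecting the failure of the strong-type endpoint bound. A routine density argument extends the solvability from smooth to distributionally $\bar\partial$-closed $f\in L^s$. For the sharper dependence when $s=2$, I would bypass the kernel estimate: the spectral gap gives $\|u\|_{L^2}^2=\langle Gf,f\rangle\le\mu_1^{-1}\|f\|_{L^2}^2$, and combining this with the "in particular" half of Corollary \ref{cor:good sobolev} applied to $u$ --- which lies in the image of $\bar\partial^*$, hence satisfies $\bar\partial^* u=0$ and $\mathcal{P}_{p,q-1}u=0$ --- produces $\|u\|_{L^{2n/(n-1)}}\le C\|f\|_{L^2}$, after which Hölder interpolates to all admissible $k$.

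The main obstacle, in my view, is bookkeeping the constant dependencies: verifying that the interchange of $x$ and $y$ in the kernel estimate preserves the constant via self-adjointness, that the HLS endpoints match the strict-inequality regime of the theorem, and in particular that the $s=2$ argument can be phrased purely through the spectral gap and the function-level Sobolev inequality, without implicitly reintroducing the bundle-curvature bounds $K_-,K_+$ through Corollary \ref{cor:good sobolev} at the $(p,q-1)$-level. The vanishing and the solution formula are standard; the delicate point is lining up the parameters so that $K=0$ (from semipositivity) is effectively used and only the $\bar\partial^*$-kernel (needing $K_-$) enters the general case.
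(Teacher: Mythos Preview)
Your proposal is correct and follows essentially the same route as the paper: vanishing via the Bochner argument (the paper packages this as Corollary~\ref{cor:vanish}), the solution $u=\bar\partial^*Gf$ with kernel representation, the pointwise bound $|\bar\partial^* G_{p,q}|\lesssim d(x,y)^{1-2n}$ from Corollary~\ref{cor:green form}(ii) at $\alpha=n/(n-1)$, and then a convolution-type inequality to get the $L^{k,s}$ bound; for $s=2$ both you and the paper invoke Corollary~\ref{cor:good sobolev}. The only cosmetic differences are that the paper writes the solution kernel as $\bar\partial_y^* G_{p,q}(x,y)$ (so no symmetry step is needed) and cites Young's inequality/Schur's test rather than HLS---these are equivalent here since the kernel bound is a radial Riesz potential. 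Your observation that only the $\bar\partial^*$-kernel enters, so the constant is free of $K_+$, is exactly what the paper notes in its proof. Your bookkeeping worry about the $s=2$ case---that applying Corollary~\ref{cor:good sobolev} at bidegree $(p,q-1)$ formally brings in $K_-$ and the $(p,q-1)$ spectral gap---is legitimate and is glossed over in the paper's one-line ``follows directly''; it does not affect the correctness of the argument, only the precise parameter list for the constant.
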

 \indent When $M$ is a real manifold, $L^{k,s}$-estimates for $d$ were constructed in  \cite{GT06}
 by using $L^{k,s}$-cohomology. For the case $k=s$, Theorem \ref{thm:complex L^p estimate} was established in \cite{LX10} under stronger positivity assumptions,
 yielding a better constant.  For compact K\"ahler manifold with boundaries, similar $L^{k,s}$-estimates of $\bar\partial $ have been constructed in \cite{DHQ25}.\\
  \indent We remark that the estimates in  the above results involving the first nonzero eigenvalue $\mu_1$ of $\square_{p,q}$.
  In general, we don't have any meaningful  estimate of it so far. On the other hand,
  under some curvature positivity assumptions, we can give a lower bound of $\mu_1$ as follows. 
   \begin{thm}\label{thm: first eigenvalue}
 Let $M$ be a compact K\"ahler manifold of complex dimension $n$.
  Fix $p,q\in\{0,\cdots,n\}.$ Let $E$ be a Hermitian holomorphic vector bundle over $M$  such that there is a real valued  function 
  $b$ over $M$ satisfying 
  $$\mathfrak{Ric}_{p,q}^E\geq b\geq 0,$$
  and $b(x)>0$ at some point $x\in M$.
  Then we have 
 $$\mu_1\geq \frac{\mu_1^0}{4|M|}\int_{M}\frac{b}{\mu_1^0+b},$$
 where $\mu_1^0$ is the first nonzero eigenvalue of $\Delta$.
 \end{thm}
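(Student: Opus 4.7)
The plan is a Bochner--Kato--Poincar\'e argument followed by a completion of squares. Take an eigensection $\phi$ of $\square_{p,q}$ with eigenvalue $\mu_1$, normalized so that $\int_M |\phi|^2 = 1$. The Weitzenb\"ock identity $\nabla^*\nabla = 2\square_{p,q} - \mathfrak{Ric}_{p,q}^E$, applied to $\phi$, paired with $\phi$ and integrated, produces
$$\int_M |\nabla\phi|^2 + \int_M b|\phi|^2 \;\leq\; 2\mu_1,$$
and the Kato inequality $|\nabla|\phi||^2 \leq |\nabla\phi|^2$ lets me replace $|\nabla\phi|$ with $|\nabla u|$ where $u := |\phi| \in W^{1,2}(M)$.

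I then split $u = c + v$ with $c := \frac{1}{|M|}\int_M u$ and $\int_M v = 0$. The Poincar\'e inequality $\mu_1^0 \int_M v^2 \leq \int_M |\nabla v|^2 = \int_M|\nabla u|^2$ combined with the above gives
$$\mu_1^0 \int_M v^2 + \int_M b u^2 \;\leq\; 2\mu_1. \qquad (\ast)$$
The crucial algebraic step is a pointwise completion of squares against the weight $\mu_1^0+b$: expanding $(\mu_1^0+b)\bigl(v + \tfrac{cb}{\mu_1^0+b}\bigr)^2$ and using $u = c+v$, one verifies the identity
$$\mu_1^0 \int_M v^2 + \int_M b u^2 \;=\; \int_M (\mu_1^0+b)\Bigl(v + \tfrac{cb}{\mu_1^0+b}\Bigr)^2 + c^2 \mu_1^0 \int_M \frac{b}{\mu_1^0+b}.$$
Discarding the nonnegative square term and feeding this into $(\ast)$ yields
$$2\mu_1 \;\geq\; c^2 \mu_1^0 \int_M \frac{b}{\mu_1^0+b}. \qquad (\ast\ast)$$

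To finish, I bound $c^2$ from below. If $2\mu_1 \geq \mu_1^0$ there is nothing to prove because $\int_M b/(\mu_1^0+b) \leq |M|$ forces the right-hand side of the theorem to be $\leq \mu_1^0/4 \leq \mu_1/2$. Otherwise, the normalization $|M|c^2 + \int_M v^2 = 1$ together with $\int_M v^2 \leq 2\mu_1/\mu_1^0$ (coming from $(\ast)$) gives $|M|c^2 \geq (\mu_1^0 - 2\mu_1)/\mu_1^0$; substituting into $(\ast\ast)$ and rearranging produces $2\mu_1(|M| + J) \geq \mu_1^0 J$ with $J := \int_M b/(\mu_1^0+b)$, and one last use of $J \leq |M|$ gives the stated bound.

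The main obstacle is not analytic but algebraic: finding the right completion of squares so that the residual curvature contribution is exactly the harmonic-mean weight $b/(\mu_1^0+b)$ is what drives the whole estimate, and once that identity is written down the rest is book-keeping. Two routine points also need to be checked in passing, namely that the hypothesis $b(x_0) > 0$ forces $\mathcal{H}_{p,q}(M,E) = 0$ via the standard Bochner vanishing (so $\mu_1 > 0$ is the bottom of the spectrum), and that $u = |\phi| \in W^{1,2}(M)$ with $|\nabla u| \leq |\nabla \phi|$ a.e., so the scalar Poincar\'e inequality is legitimately applicable.
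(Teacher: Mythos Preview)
Your proof is correct and follows essentially the same Bochner--Kato--Poincar\'e--completion-of-squares route as the paper (which phrases the completion as a Cauchy--Schwarz bound on the cross term $2c\int_M b\,v$). The only cosmetic difference is in the case split used to bound $c^2$ from below: the paper cuts at the fixed threshold $\int_M v^2 = 1/2$ (so that $c^2 > 1/(2|M|)$ directly in the relevant case), whereas you cut at $2\mu_1 = \mu_1^0$ and feed $\int_M v^2 \leq 2\mu_1/\mu_1^0$ back into the normalization---both lead to the same constant.
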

  %if $\mathfrak{Ric}_{p,q}^E\geq 0$ and the inequality is strict at some point of $M$, 
 %then $\mathcal{H}_{p,q}(M,E)=0$ and  we can obtain a lower 
 %bound of $\mu_1$ in terms of the first nonzero eigenvalue $\mu_1^0$  of the Laplace-Beltrami operator $\Delta$  acting on functions, see  Corollary \ref{cor:vanish}. This yields  an $L^2$-estimate for solutions to the  $\bar\partial$-equation,
 %see Corollary \ref{cor:L2 estimate}. \\

 \par With the above preparations, we now generalize  the Sobolev-type inequalities in \cite[Theorem 2.1]{GPSS23} and \cite[Therorem 2.6]{GT24} 
from the case of functions to the framework of twisted differential forms.
 We focus on compact K\"ahler manifolds within the framework of
 \cite{GPSS23}, as the other case can be treated similarly.  
% \indent With Theorem \ref{thm: heat kernel} in hand, we generalize \cite[Theorem 2.1]{GPSS23} 
 %to the following family version of Sobolev-type inequalities:
 \begin{thm}\label{thm:family version}
 Let $E$ be a Hermitian holomorphic vector bundle over a compact K\"ahler manifold $(X,\omega_0)$ of complex dimension $n$.
 Given constants $m>n,\ A>0,\ B>0,\ K\geq 0,$ and  $p,q\in\{0,\cdots,n\}$, there are constants $\alpha:=\alpha(n,m)>1$ and 
 $C:=C(X,\omega_0,n,m,\alpha,A,B,K)>0$ such that for any  K\"ahler metric $\omega\in \mathcal{V}_{p,q}^E(X, \omega_0, n,m,A,B,K)$ and any $f\in C^1(X,
 \Lambda^{p,q}T^*X\otimes E)$, the following Sobolev-type inequality holds:
$$\left(\int_{X}\left|f-\mathcal{P}_\omega f\right|^{2\alpha}\omega^n\right)^{1/\alpha}\leq CV_\omega^{\frac{1}{\alpha}-1}
(1+\eta)^2\left(\int_{X}|\bar\partial f|^{2}\omega^n+\int_{X}|\bar\partial^* f|^{2}\omega^n\right),$$
where 
$$\eta:=2r(I_\omega K+1)^{\frac{2\alpha}{\alpha-1}}+e^K
   +I_\omega^{\frac{2\alpha}{\alpha-1}}e^{K+I_\omega^{-1}}+rI_\omega^{\frac{2\alpha}{\alpha-1}}e^{\frac{2(KI_\omega+1)\alpha}{(\alpha-1)I_\omega\mu_{1,\omega}}}.$$
\indent Furthermore, if $K=0,$  then there is a constant $c:=c(X,\omega_0,n,m,A,B)>0$
such that for any $\omega\in \mathcal{V}_{p,q}^E(X, \omega_0, n,m,A,B,0)$ satisfying $b_{p,q,\omega}\geq 1$, we have 
$$\mu_{1,\omega}\geq \frac{c}{I_\omega}.$$
 \end{thm}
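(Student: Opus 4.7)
The plan is to derive both parts of Theorem \ref{thm:family version} by specializing Corollary \ref{cor:good sobolev} and Theorem \ref{thm: first eigenvalue} to metrics $\omega$ in the class $\mathcal{V}_{p,q}^E(X,\omega_0,n,m,A,B,K)$, and then using the Guo-Phong-Song-Sturm scalar Sobolev inequality \cite[Theorem 2.1]{GPSS23} to make the input constants uniform. The class $\mathcal{V}_{p,q}^E$ is designed precisely so that, for every $\omega$ in it, the scalar inequality (\clubtag) holds with an exponent $\alpha=\alpha(n,m)>1$ and constants $\beta_\omega,\gamma_\omega$ that are controlled explicitly in terms of $X,\omega_0,n,m,A,B,K$ and $I_\omega$; schematically $\beta_\omega\lesssim I_\omega$ and $\gamma_\omega$ is polynomial in $I_\omega K+1$ and in $V_\omega^{-(1-1/\alpha)}$.

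For the first assertion, I would apply the ``in particular'' form of Corollary \ref{cor:good sobolev} to $(X,\omega,E)$ with these $\alpha,\beta_\omega,\gamma_\omega$ and with the Weitzenb\"ock bound $K$. The constant $\delta$ appearing there, once the proof is unwound (it feeds Parts (i)--(ii) of Theorem \ref{thm: heat kernel} into a heat-semigroup argument), has the schematic form
$$\delta^{-1}\lesssim V_\omega^{1-\frac{1}{\alpha}}\Bigl(1+r\beta_\omega^{\frac{2\alpha}{\alpha-1}}V_\omega\,e^{\frac{2\alpha}{\alpha-1}\bigl(\frac{K}{\mu_{1,\omega}}+\frac{\gamma_\omega}{\beta_\omega\mu_{1,\omega}}\bigr)}\Bigr)^{2}.$$
Substituting the GPSS bounds for $\beta_\omega,\gamma_\omega$ and collecting terms, each of the four summands defining $\eta$ arises from exactly one of the factors $r(I_\omega K+1)^{2\alpha/(\alpha-1)}$, $e^{K}$, $I_\omega^{2\alpha/(\alpha-1)}e^{K+I_\omega^{-1}}$, and $rI_\omega^{2\alpha/(\alpha-1)}e^{2(KI_\omega+1)\alpha/((\alpha-1)I_\omega\mu_{1,\omega})}$ hidden inside this expression, so matching the two sides is a matter of algebraic rearrangement.

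For the second assertion, set $K=0$, so that $\mathfrak{Ric}_{p,q}^E\geq 0$. Theorem \ref{thm: first eigenvalue} reduces the problem to two pieces: a lower bound for $\mu_{1,\omega}^{0}$, the first positive eigenvalue of the scalar Laplacian of $\omega$, and a positivity bound for the integral $\int_X b/(\mu_{1,\omega}^{0}+b)\,\omega^n$ for an appropriate choice of $b$ with $0\leq b\leq \mathfrak{Ric}_{p,q}^E$ and $b>0$ somewhere. The former follows by testing the GPSS inequality (\clubtag) on mean-zero functions and applying H\"older, yielding $\mu_{1,\omega}^{0}\geq c'/I_\omega$ with $c'=c'(X,\omega_0,n,m,A,B)$. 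The latter is handled by choosing $b$ uniformly across $\mathcal{V}_{p,q}^E$, where the hypothesis $b_{p,q,\omega}\geq 1$ excludes the degenerate configuration in which no such $b$ is available in a controlled way. Combining the two estimates gives $\mu_{1,\omega}\geq c/I_\omega$ with $c=c(X,\omega_0,n,m,A,B)$.

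The main obstacle is the precise bookkeeping in the first part: the expression for $\eta$ mingles $r$, $K$, $I_\omega$, $\mu_{1,\omega}$ and exponents of the form $2\alpha/(\alpha-1)$ in four distinct places, and one must verify that the factors emerging from Corollary \ref{cor:good sobolev} reassemble into exactly this form after substitution of the GPSS bounds. A secondary difficulty lies in the second part, namely guaranteeing that the function $b$ used in Theorem \ref{thm: first eigenvalue} can be chosen so that the resulting integral is positive with a bound depending only on $X,\omega_0,n,m,A,B$; this is the point at which the interplay between the Bochner identity, the rigidity forced by $\mathfrak{Ric}_{p,q}^E\geq 0$ together with the existence of harmonic forms, and the fixed bundle data of $E$ must be exploited carefully.
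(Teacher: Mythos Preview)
Your strategy for the Sobolev inequality is essentially the paper's: feed the GPSS scalar Sobolev constants from Lemma \ref{lem:sobolev family functions} (so $\beta_\omega=CI_\omega V_\omega^{1/\alpha-1}$, $\gamma_\omega=CV_\omega^{1/\alpha-1}$, hence $\gamma_\omega/\beta_\omega=I_\omega^{-1}$; note $\gamma_\omega$ does not involve $K$) into the heat-kernel machinery behind Corollary \ref{cor:good sobolev}, and track how the constants $C_3,C_6$ of Lemma \ref{lem:weak type} assemble into $\eta$. The paper carries this out explicitly in Section \ref{sec:family}.

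The second assertion, however, cannot be obtained from Theorem \ref{thm: first eigenvalue} as you propose. That theorem requires $\mathfrak{Ric}_{p,q}^E\geq b\geq 0$ with $b>0$ \emph{somewhere}, but the hypothesis $K=0$ only gives $\mathfrak{Ric}_{p,q,\omega}^E\geq 0$, and nothing rules out $\mathfrak{Ric}_{p,q,\omega}^E\equiv 0$. The assumption $b_{p,q,\omega}\geq 1$ does not help here: the existence of a nonzero harmonic form is perfectly compatible with $\mathfrak{Ric}_{p,q}^E\equiv 0$ (any parallel form is harmonic), and in that case no admissible $b$ exists at all. Even when such a $b$ exists for a given $\omega$, you would still need a \emph{uniform} lower bound on $V_\omega^{-1}\int_X b/(\mu_{1,\omega}^0+b)\,\omega^n$ across the whole family, and nothing in the setup provides this. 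Your side argument for the scalar gap $\mu_{1,\omega}^0\geq c'/I_\omega$ by testing (\clubtag) on mean-zero functions is also incomplete: that route only yields $\mu_{1,\omega}^0\geq (V_\omega^{1/\alpha-1}-\gamma_\omega)/\beta_\omega$, which need not be positive. The paper's proof (Proposition \ref{prop:first eigenvalue family}) avoids Theorem \ref{thm: first eigenvalue} entirely. It combines the GPSS long-time scalar heat kernel bound \cite[Theorem 2.2]{GPSS23} with the domination $|H_{p,q,\omega}|\leq H_\omega$ from \cite[Theorem 4.3]{LX10} (valid since $K=0$) to get $|H_{p,q,\omega}(t,x,x)|\leq V_\omega^{-1}(1+Ce^{-ct/I_\omega})$ for $t\geq I_\omega$; taking the trace and integrating over $X$ gives $b_{p,q,\omega}+e^{-\mu_{1,\omega}t}\leq 1+Ce^{-ct/I_\omega}$, and the hypothesis $b_{p,q,\omega}\geq 1$ is used precisely to cancel the $1$, after which $t\to\infty$ forces $\mu_{1,\omega}\geq c/I_\omega$.
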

 \indent For the various notations and a more general version of Theorem \ref{thm:family version}, please see Section \ref{sec:family}. When $E$ is a trivial line bundle, and $(p,q)=(0,0)$, we have $\mathfrak{Ric}_{0,0,\omega}^E=0,\ b_{0,0,\omega}=1$ for any 
 K\"ahler metric $\omega$. Thus, Theorem \ref{thm:family version} generalizes \cite[Theorem 2.1]{GPSS23}.\\
% Using ideas from \cite{GPSS23}, it is also possible to extend Theorem \ref{thm:family version} to compact K\"ahler spaces
 %with singularities, but we don't pursue this here.\\
 \begin{comment}
 %By using the same ideas, one may also generalize \cite[Theorem ]{GT24} to the case of twisted differential forms.\\
 \indent In the last, we will give a family version of  Theorem \ref{thm:complex L^p estimate}.
 \begin{thm}\label{thm:L^p estimates family}
 Under the setting of \cite{GPSS23} or \cite{GT24}, and we moreover assume $\mathfrak{Ric}_{p,q,\omega}^E\geq -K$ for some constant $K\geq 0$
 where $q\geq 1$.
 Suppose  $1<k,s<\infty$ satisfying 
 $$k(2n-s)\leq 2ns.$$
 Then there is a constant 
 $C>0$ which only depends on the initial metric, the entropy bounds and $\alpha, K,k,s$ such that for any $\bar\partial$-closed $f\in L^s(M,\Lambda^{p,q}T^*M\otimes E)$, we may find  a $u\in L^k(M,\Lambda^{p,q-1}T^*M\otimes E)$ such that 
    $$\bar\partial u=f,\ \left(\int_{M}\left|u\right|^{k}dV_\omega\right)^{\frac{1}{k}}
 \leq C\left(\int_{M}\left|f\right|^{s}dV_\omega\right)^{\frac{1}{s}}.$$ 
 \end{thm}
 \indent Clearly, similar conclusions of Theorem \ref{thm:L^p estimates family} also hold under the framework of \cite{GT24}.\\
 \end{comment}
 \indent This paper is organized as follows. In Section \ref{notations}, we fix some notations and conventions that are needed in our discussions.  In Section \ref{sec:main theorem}, 
 we will give some estimates of eigenvalues, eigensections, and heat kernels.
 Particularly, we will give the proof of  Theorem \ref{thm: heat kernel},
 the proof of Theorem \ref{thm: first eigenvalue} and a vanishing theorem of $\mathcal{H}_{p,q}(M,E)$
 (Corollary \ref{cor:vanish}), and an $L^2$-estimate for $\bar\partial$ (Corollary \ref{cor:L2 estimate}). In the next section, we give the proofs of Corollary \ref{cor:green form} and \ref{cor:good sobolev}.
 The proof of Theorem \ref{thm:complex L^p estimate} is provided in Section \ref{sec:estimate}.
 In the last section, we will prove some Sobolev-type inequalities for K\"ahler families (Theorem \ref{thm:sobolev family good})
 and give some lower bounds of the eigenvalues (Theorem \ref{thm:eigenvalue family}), which will complete the proof of Theorem \ref{thm:family version}. 
  \section{Notations and Conventions}\label{notations}

    \par In this section, we fix some notations and conventions that are needed in our discussions.\\
    \indent We set $\mathbb{N}:=\{0,1,2,\cdots\}.$
    Let $T$ be a symmetric linear operator on an inner product space $(V,\langle\cdot,\cdot\rangle)$, we write $T\geq 0$ if 
$\langle Tx,x\rangle\geq 0$ for all $x\in V$, and  say $T$ is nonnegative. For  $b\in\mr$, we write $T\geq b$ if 
$T-bI$ is nonnegative, where $I$ is the identity operator.\\ %Set $\mr_+:=\{x\in\mr|\ x>0\}$.
 %  For any subset $A$ in a topological space $X$,
 %   we use $\overline{A}$ to denote its closure in $X$, $A^\circ$ to denote the set of its interior point,
  %  and set $\partial A:=\overline A\setminus A^\circ.$ If $A,B$ are two subsets of a topological space, then we write $A\subset\subset B$ if $\overline{A}$ is a compact subset of $B.$  
    \indent  Let $M$ be a complex manifold of complex dimension $n$, the Riemannian distance on $M$ is denoted by $d(\cdot,\cdot)$, 
    and the Riemannian volume form is denoted by $dV$. For $x\in M$ and $r>0$, set 
    $$B(x,r):=\{y\in M|\ d(x,y)<r\}.$$
    For a subset $U\subset M$, we use $|U|$ (resp. $\diam(U)$) to denote the volume (resp. diameter) of $U$. When $\omega$ is a K\"ahler metric on $M$, then  
    $$dV=\frac{\omega^n}{n!}.$$
      For a Hermitian vector bundle $E$ over $M$, the Hermitian metric is also denoted by $\langle \cdot,\cdot\rangle$, 
      which we assume to be complex linear in its first entry. 
      For any  $k\in\mathbb{N}\cup\{\infty\}$, and any open subset $U\subset\subset M$, we use $C^k(U,E)$ to denote the space of all $C^k$-smooth sections of $E$ over $U$.  Similarly, for $1\leq p\leq \infty$, $L^p(U,E)$ %(resp. $L_{\loc}^2(U,E)$)
   denotes of all Lebesgue measurable sections of $E$ over $U$ which
 are $L^p$-integrable. %(resp. locally $L^p$-integrable) 
 If $E=M\times \mc$ is trivial, then we write $C^k(U):=C^k(U,E)$.

    \par On a complex manifold $M$, the Laplace-Beltrami operator on $M$ is denoted by $\Delta$. We use $\bar\partial$ to denote the dbar operator on $M$. 
    %The Hodge-Laplace operator   of $\partial$ and $\bar\partial$ are given by 
    % $$\Delta_\partial:=\partial\partial^*+\partial^*\partial,\ \Delta_{\bar\partial}:=\bar\partial\bar\partial^*+\bar\partial^*\bar\partial.$$
     For $p,q\in \mathbb{N}$, let $\Lambda^{p,q}T^*M$ denote the bundle of smooth $(p,q)$-forms on $M$, and let $\Lambda^{p,q}T^*M\otimes E$  denote the bundle of smooth $E$-valued $(p,q)$-forms on $M$, where $E$ is a Hermitian holomorphic vector bundle over $M$. The operator $\bar\partial$ extends naturally to smooth $E$-valued $(p,q)$-forms, and we use the same notation $\bar\partial$ for this extension. Let $\nabla$ denote the Chern connection on $E$.
      The formal adjoint of $\bar\partial,\ \nabla$ is  denoted by $\bar\partial^*$ and $\nabla^*$, respectively. The $\bar\partial$-Laplacian is defined as  
       $$\square_{p,q}:=\bar\partial\bar\partial^{*}+\bar\partial^{*}\bar\partial.$$
       Now assume $M$ is K\"ahler. Define the Weitzenb\"ock curvature operator 
       $$\mathfrak{Ric}_{p,q}^E:=2\square_{p,q}-\nabla^*\nabla.$$
       For the local expression of  $\mathfrak{Ric}_{p,q}^E$, see \cite[Theorem 3.1]{LX10}.
       From the local expression, it follows that if $E$ is a trivial line bundle, then $\mathfrak{Ric}_{0,0}^E=0$ and $\mathfrak{Ric}_{n,0}^E$ is the scalar curvature of  $M$. The Bochner-Weitzenb\"ock formula states that 
      % Choose a local orthornormal frame $V_1,\cdots,V_n,\bar{V_1},\cdots,\bar{V_n}$, 
      % the (second) Ricci curvature $\operatorname{Ric}^E$ of $E$ is defined by 
      % $$\operatorname{Ric}^Ef:=\sum_{i=1}^n R(V_i,\bar{V_i})f,\ \forall f\in C^2(\Lambda^{p,q}T^*M\otimes E).$$
       %Similarly, we can define $\mathfrak{Ric}_{p,q}^E$ as in (\ref{equ:operator}). 
       %Now suppose 
       %$M$ is K\"ahler. For the explicit expression of the curvature operator $\mathfrak{Ric}_{p,q}$, see \cite[Theorem 3.1]{LX10}.
       %Then we have 
       %$$2\square_{p,q}=D^*D+\operatorname{Ric}^E$$  
       %and 
       $$\Delta\frac{|f|^2}{2}=\operatorname{Re}\langle (-2\square_{p,q}+\mathfrak{Ric}_{p,q}^E)f,f\rangle+|\nabla f|^2,\ \forall f\in C^2(M,\Lambda^{p,q}T^*M\otimes E),$$
       where $\operatorname{Re}(\cdot)$ denotes the real part of a complex number.
\section{Estimates of eigenvalues, eigensections and heat kernels}\label{sec:main theorem}
This section provides estimates for eigenvalues, eigensections, and heat kernels, including the proof of Theorem \ref{thm: heat kernel}, and a lower bound for the first nonzero eigenvalue of $\square_{p,q}$, i.e., the proof of Theorem \ref{thm: first eigenvalue}. \\
\indent We begin with a $C^0$-estimate for eigensections.
\begin{lem}\label{lem:eigensection}
Under the same assumptions and notations of Theorem \ref{thm: heat kernel}. Let \(\lambda \geq 0\) be an eigenvalue of $\square_{p,q}$, and 
let \(\phi\) be a corresponding  eigensection, i.e., 
\begin{equation*}
\square_{p,q}\phi = \lambda\phi.
\end{equation*}
 Then we have the following mean value inequality:
\begin{equation*}
\sup_{M} |\phi|^{2} \leq (2\beta(2\lambda+K)+2\gamma)^{\frac{\alpha}{\alpha-1}}\cdot \alpha^{\frac{\alpha}{(\alpha-1)^2}}\int_{M} |\phi|^{2}.
\end{equation*}
\end{lem}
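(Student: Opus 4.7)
The plan is a classical Moser iteration applied to the scalar function $v := |\phi|$, with the Sobolev-type inequality (\clubtag) playing the role usually taken by the Euclidean Sobolev inequality. First, feeding the eigenequation $\square_{p,q}\phi=\lambda\phi$ together with $\mathfrak{Ric}_{p,q}^E\geq -K$ into the Bochner-Weitzenb\"ock identity recalled in Section \ref{notations} yields
$$\Delta|\phi|^2 \;\geq\; -2(2\lambda+K)|\phi|^2 + 2|\nabla\phi|^2.$$
Writing $\Delta|\phi|^2 = 2|\phi|\Delta|\phi| + 2|\nabla|\phi||^2$ on $\{|\phi|>0\}$ and applying Kato's inequality $|\nabla|\phi||^2 \leq |\nabla\phi|^2$, I obtain the weak differential inequality $\Delta v \geq -(2\lambda+K)\,v$ on $M$, after the standard regularization $v_\varepsilon := \sqrt{|\phi|^2+\varepsilon}$ to bypass the zero set of $\phi$ and a final limit $\varepsilon\to 0$.

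Next, for each $p\geq 1$, testing this inequality against $v^{2p-1}$ and integrating by parts on the closed manifold $M$ gives
$$\frac{2p-1}{p^2}\int_M|\nabla v^p|^2 \;\leq\; (2\lambda+K)\int_M v^{2p}.$$
Since $p^2/(2p-1)\leq p$ for $p\geq 1$, plugging $v^p$ into (\clubtag) and absorbing $\gamma$ into $p\gamma$ yields the one-step improvement
$$\left(\int_M v^{2p\alpha}\right)^{1/\alpha} \;\leq\; p\bigl(2\beta(2\lambda+K)+2\gamma\bigr)\int_M v^{2p}.$$

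The iteration is now routine. Setting $p_j:=\alpha^j$, $A:=2\beta(2\lambda+K)+2\gamma$, and $I_j:=\int_M v^{2p_j}$, the previous display rearranges to $I_{j+1}^{1/p_{j+1}}\leq (p_jA)^{1/p_j}I_j^{1/p_j}$. Telescoping from $j=0$ and sending $j\to\infty$ (using $I_j^{1/p_j}\to \sup_M v^2$) gives
$$\sup_M |\phi|^2 \;\leq\; \prod_{j=0}^{\infty}(\alpha^j A)^{\alpha^{-j}}\cdot\int_M|\phi|^2,$$
and the two convergent series $\sum_{j\geq 0}\alpha^{-j}=\alpha/(\alpha-1)$ and $\sum_{j\geq 0}j\alpha^{-j}=\alpha/(\alpha-1)^2$ (both crucially requiring $\alpha>1$) evaluate the prefactor as $A^{\alpha/(\alpha-1)}\cdot\alpha^{\alpha/(\alpha-1)^2}$, which is exactly the constant in the statement. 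The only genuine technical point is the Kato regularization at $\{\phi=0\}$; everything else is bookkeeping of exponents.
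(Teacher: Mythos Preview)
Your proof is correct and follows essentially the same Moser iteration scheme as the paper: Bochner--Weitzenb\"ock gives a differential inequality, integration by parts turns it into an energy bound for $v^p$, the Sobolev inequality~(\clubtag) upgrades the integrability, and the geometric sums in $\alpha$ produce the stated constant. The one technical difference is that you set $v:=|\phi|$ and invoke Kato's inequality (with the $v_\varepsilon$ regularization) to get $\Delta v\geq -(2\lambda+K)v$, whereas the paper sets $v:=|\phi|^2$ and works directly with $\Delta v\geq -2(2\lambda+K)v$, avoiding Kato and any zero-set issues since $|\phi|^2$ is smooth; the paper's iteration then lands on $\sup|\phi|^4\leq C\int_M|\phi|^4$, which yields the stated $L^2\!\to\!L^\infty$ bound after dividing through by $\sup|\phi|^2$. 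Your route is slightly more direct at the end, the paper's is slightly cleaner at the start---but the argument is the same.
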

\begin{proof}
This proof is inspired by the proof of \cite[Lemma 2.2]{LZZ21}.
By Bochner-Weitzenb\"ock formula, we get 
$$\Delta\frac{|\phi|^2}{2}=\operatorname{Re}\langle (-2\square_{p,q}+\mathfrak{Ric}_{p,q}^E)\phi,\phi\rangle+|\nabla\phi|^2
\geq -(2\lambda+K)|\phi|^2.$$
Set $v:=|\phi|^2$, then 
$$\Delta v\geq -2(2\lambda+K)v.$$
For $p\geq 1$, integration by parts yields 
$$\int_{M}\frac{2p-1}{p^2}|\nabla v^p|^2=-\int_{M}v^{2p-1}\Delta v
\leq 2\int_{M}(2\lambda+K)v^{2p}.$$
The Sobolev-type inequality (\clubtag) then implies
$$\left( \int_{M} v^{2p\alpha} \right)^{\frac{1}{\alpha}} \leq pC\int_{M} v^{2p},$$
where $C:=2\beta(2\lambda+K)+2\gamma$.
Now setting $p:=\alpha^{k-1}$ for $k=1,2,3\cdots,$ we obtain by iteration: 
\begin{align*}
\left( \int_{M} v^{2\alpha^{k}} \right)^{1/\alpha^{k}} &\leq C^{\alpha^{-(k-1)}} \alpha^{(k-1)\alpha^{-(k-1)}} \left( \int_{M} v^{2\alpha^{k-1}} \right)^{1/\alpha^{k-1}}\\
&\leq C^{\sum_{i=1}^k \alpha^{-(i-1)}}\prod_{j=1}^k \alpha^{(j-1)\alpha^{-(j-1)}}\int_{M} v.
\end{align*}
Taking the limit as $k\rw \infty$ gives the result.
\begin{comment}
Let $k\rw \infty$, then we conclude the result.
Since $\Delta=-2\square_{0,0}$, then we get 
$$\int_{M}\frac{1}{2}|\phi|^{2p-2}\Delta|\phi|^2\geq 
\int_{M}\left[ |\phi|^{2p-2} |\nabla|\phi||^{2} -(\lambda+K)|\phi|^{2p} \right].$$

Using integration by parts on the LHS, one gets that for \(p\geq 1\),
\[
-\int_{M} \frac{2(p-1)}{p^{2}} |\nabla|\phi|^{p}|^{2} \geq \int_{M} \left[ \frac{1}{p^{2}} |\nabla|\phi|^{p}|^{2} -(\lambda+K)|\phi|^{2p} \right],
\]
i.e., 
\[
\int_{M} |\nabla|\phi|^{p}|^{2} \leq \frac{p^{2}(\lambda + K)}{2p - 1} \int_{M} |\phi|^{2p}.
\] 
By assumption, %and by approximation (see Theorem 3.1 of nonlinear analysis), 
we know 
$$\left( \int_{M} |\phi|^{2p\alpha} \right)^{1/\alpha}\leq pC\int_{M} |\phi|^{2p},$$
\end{comment}
\end{proof}
Lemma \ref{lem:eigensection} easily implies the $L^k$-boundedness of the Bergman projection.
\begin{cor}\label{cor:Bergman}
 Under the same assumptions and notations of Theorem \ref{thm: heat kernel}, then the Bergman projection $\mathcal{P}$ is 
 $L^k$-bounded for any $1\leq k\leq \infty$. 
\end{cor}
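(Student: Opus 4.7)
The plan is to reduce everything to the $C^0$-bound on harmonic sections supplied by Lemma \ref{lem:eigensection}. Write $\mathcal{P}f=\sum_{j=1}^{b_{p,q}}\langle f,\phi_j\rangle\,\phi_j$, where $\{\phi_j\}_{j=1}^{b_{p,q}}$ is an orthonormal basis of $\mathcal{H}_{p,q}(M,E)$; since $M$ is compact and $\square_{p,q}$ is elliptic, $b_{p,q}<\infty$. Apply Lemma \ref{lem:eigensection} with $\lambda=0$ to each $\phi_j$ to obtain a constant $C_0:=C_0(\alpha,\beta,\gamma,K)>0$ with
\[
\sup_M|\phi_j|^2\leq C_0\int_M|\phi_j|^2=C_0,\qquad j=1,\dots,b_{p,q}.
\]

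Now observe that $\mathcal{P}$ admits the smooth Schwartz kernel $K_{\mathcal{P}}(x,y):=\sum_{j=1}^{b_{p,q}}\phi_j(x)\otimes\phi_j(y)$, so that
\[
|K_{\mathcal{P}}(x,y)|\leq\sum_{j=1}^{b_{p,q}}|\phi_j(x)|\,|\phi_j(y)|\leq b_{p,q}\,C_0,\qquad\forall (x,y)\in M\times M.
\]
For the $L^\infty$ case, combine this with the Cauchy--Schwarz inequality and $\int_M|\phi_j|\leq|M|^{1/2}$ to get
\[
\|\mathcal{P}f\|_{L^\infty}\leq\sum_{j=1}^{b_{p,q}}|\langle f,\phi_j\rangle|\sup_M|\phi_j|\leq b_{p,q}\,C_0\,|M|^{1/2}\,\|f\|_{L^\infty}.
\]
For the $L^1$ case, use $|\langle f,\phi_j\rangle|\leq\|f\|_{L^1}\sup_M|\phi_j|\leq C_0^{1/2}\|f\|_{L^1}$ and $\int_M|\phi_j|\leq|M|^{1/2}$ to get a bound of the same type on $\|\mathcal{P}f\|_{L^1}$. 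Finally, Riesz--Thorin interpolation between the $L^1\to L^1$ and $L^\infty\to L^\infty$ bounds yields the $L^k$-boundedness for every $1\leq k\leq\infty$; alternatively, one can apply Schur's test directly to the bounded kernel $K_{\mathcal{P}}$ on the finite-measure space $(M,dV)$.

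There is no real obstacle here — the argument is essentially bookkeeping once Lemma \ref{lem:eigensection} supplies the uniform sup-norm bound on an $L^2$-normalized harmonic basis, and the finite-dimensionality of $\mathcal{H}_{p,q}(M,E)$ converts this into a bounded Schwartz kernel. The only care needed is to track that the resulting constant depends only on the data $(\alpha,\beta,\gamma,K,r,|M|,b_{p,q})$ already built into the hypotheses of Theorem \ref{thm: heat kernel}.
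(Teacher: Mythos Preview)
Your proposal is correct and follows exactly the route the paper indicates: the paper gives no detailed proof but simply states that the corollary follows easily from Lemma~\ref{lem:eigensection}, and your argument is precisely the natural way to cash this out---use the sup-norm bound on an $L^2$-orthonormal harmonic basis to show the Bergman kernel is uniformly bounded, then conclude by interpolation or Schur's test. One cosmetic point: in your $L^\infty$ estimate the factor should be $C_0^{1/2}$ rather than $C_0$, since $\sup_M|\phi_j|\leq C_0^{1/2}$; this does not affect the argument.
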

\begin{remark}
If $\mathfrak{Ric}_{p,q}^E\geq 0,\mathfrak{Ric}_{p,q\pm 1}^E\geq 0$ and $1<k<\infty$, then 
Corollary \ref{cor:Bergman} also follows from \cite[Theorem 6.2]{LX10}.
\end{remark}
\indent Using \cite[Lemma 4.1]{B02}, the following dimensional estimate of Dolbeault cohomology groups can be derived from Lemma \ref{lem:eigensection}.
\begin{cor}\label{cor:estimate1}
Under the same assumptions and notations of Theorem \ref{thm: heat kernel}, and set 
$$\mathcal{H}_{\leq\lambda}^{p,q}(M,E):=\{\phi\in L^2(M,\Lambda^{p,q}T^*M\otimes E)|\ \square_{p,q}\phi=\mu\phi\text{ for some }0\leq\mu\leq \lambda\}.$$
Then 
$$\dim_{\mc}\mathcal{H}_{\leq\lambda}^{p,q}(M,E)\leq r|M|(2\beta(2\lambda+K)+2\gamma)^{\frac{\alpha}{\alpha-1}}\cdot \alpha^{\frac{\alpha}{(\alpha-1)^2}}.$$
\end{cor}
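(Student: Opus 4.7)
\emph{Proof plan.} The plan is to combine the pointwise $L^\infty$ bound on eigensections furnished by Lemma~\ref{lem:eigensection} with the general dimension-counting argument recorded in \cite[Lemma 4.1]{B02}.

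Since $\square_{p,q}$ is a non-negative self-adjoint elliptic operator with discrete spectrum, the space $\mathcal{H}_{\leq\lambda}^{p,q}(M,E)$ is finite-dimensional and decomposes as an orthogonal direct sum of eigenspaces corresponding to the eigenvalues $0\leq\mu\leq\lambda$. First, I would fix an $L^2$-orthonormal basis $\{\phi_j\}_{j=1}^{N}$ adapted to this decomposition, so that $\square_{p,q}\phi_j=\mu_j\phi_j$ with $\mu_j\in[0,\lambda]$ for every $j$.

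Next, I would apply Lemma~\ref{lem:eigensection} individually to each $\phi_j$. Since $\int_M|\phi_j|^2=1$, and since the right-hand side of the mean-value inequality in Lemma~\ref{lem:eigensection} is monotone increasing in the eigenvalue parameter, the bound $\mu_j\leq\lambda$ yields the uniform estimate
\begin{equation*}
\sup_M |\phi_j|^2 \;\leq\; (2\beta(2\lambda+K)+2\gamma)^{\frac{\alpha}{\alpha-1}}\cdot \alpha^{\frac{\alpha}{(\alpha-1)^2}} \;=:\; C_\lambda
\end{equation*}
simultaneously for all $j=1,\ldots,N$.

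At this stage \cite[Lemma 4.1]{B02} supplies the standard trace/rank conversion: an $L^2$-orthonormal family of sections of a rank-$r$ Hermitian vector bundle over a compact manifold $M$ whose members are all bounded in sup norm by $\sqrt{C_\lambda}$ has cardinality at most $r|M|C_\lambda$. Applying this to the family $\{\phi_j\}$, viewed as sections of the Hermitian bundle $\Lambda^{p,q}T^*M\otimes E$ of the appropriate rank, yields $N\leq r|M|C_\lambda$, which is exactly the asserted inequality. The only substantive ingredient is the mean-value estimate of Lemma~\ref{lem:eigensection}; the final dimensional step is a routine trace argument and presents no further essential obstacle, beyond verifying that the eigenvalue-dependent constant from Lemma~\ref{lem:eigensection} is indeed majorized uniformly by $C_\lambda$.
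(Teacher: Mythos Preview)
Your plan---apply Lemma~\ref{lem:eigensection} and then \cite[Lemma~4.1]{B02}---is exactly the paper's argument. There is, however, a genuine slip in how you render Berndtsson's lemma. A uniform sup bound on the \emph{members of one particular orthonormal family} does not by itself control its cardinality: the exponentials $e^{ik\theta}/\sqrt{2\pi}$ on $S^1$ (trivial line bundle, $r=1$) are $L^2$-orthonormal with $|\phi_k|^2\equiv 1/(2\pi)$ for every $k\in\mathbb{Z}$, yet there are infinitely many of them, so the inequality ``$N\le r|M|C$'' you wrote down would read $N\le 1$. What \cite[Lemma~4.1]{B02} actually requires is that \emph{every} unit vector $\phi$ in the subspace satisfy $\sup_M|\phi|^2\le C$; under that hypothesis one gets the dimension bound.

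This matters here because a general element of $\mathcal H_{\le\lambda}^{p,q}(M,E)$ is a linear combination of eigensections with \emph{different} eigenvalues and is therefore not itself an eigensection, so Lemma~\ref{lem:eigensection} as stated does not literally cover it. The paper's phrase ``every element of $\mathcal H_{\le\lambda}^{p,q}(M,E)$ is smooth'' is a cue that the mean-value inequality is to be applied to every such $\phi$, not only to your chosen eigenbasis. To make your write-up correct you must close this gap: verify that the Bochner/Moser argument behind Lemma~\ref{lem:eigensection} still yields $\sup_M|\phi|^2\le C_\lambda\|\phi\|_{L^2}^2$ for arbitrary $\phi\in\mathcal H_{\le\lambda}^{p,q}(M,E)$, rather than inferring the dimension bound from the eigenbasis alone.
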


\begin{proof}
By regularity, every element of $\mathcal{H}_{\leq\lambda}^{p,q}(M,E)$ is smooth, then the result follows from 
Lemma \ref{lem:eigensection} and \cite[Lemma 4.1]{B02}.
\end{proof}
From Corollary \ref{cor:estimate1}, we have the following:
\begin{cor}\label{cor:estimate2}
Under the same assumptions and notations of Theorem \ref{thm: heat kernel}, and we further assume $E$ is a line bundle.
Suppose $F$ is a Hermitian holomorphic vector bundle of rank $s$ over $M$, then there are constants $K_1,K_2\geq 0$ such that 
$$\mathfrak{Ric}_{p,q}^{E^{\otimes \ell}\otimes F}\geq -\ell K_1-K_2,\ \forall \ell\in\mathbb{N}\setminus\{0\}.$$
Then for any $\ell\in\mathbb{N}\setminus\{0\}$ and any $\lambda\geq 0$, we have 
$$\dim_{\mc}\mathcal{H}_{\leq\lambda}^{p,q}(M,E^{\otimes \ell}\otimes F)\leq s|M|(2\beta(2\lambda+\ell K_1+K_2)+2\gamma)^{\frac{\alpha}{\alpha-1}}\cdot \alpha^{\frac{\alpha}{(\alpha-1)^2}}.$$
\end{cor}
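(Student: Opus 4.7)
The plan is to reduce the claim to a direct invocation of Corollary \ref{cor:estimate1} applied to the bundle $E^{\otimes\ell}\otimes F$. The crucial point is that the Sobolev-type inequality \eqref{ineq:club} is a statement about scalar functions on $M$, so it is completely independent of which Hermitian holomorphic vector bundle we put over $M$; thus the constants $\alpha,\beta,\gamma$ continue to govern the mean value iteration from Lemma \ref{lem:eigensection} regardless of the coefficient bundle. Consequently Corollary \ref{cor:estimate1} remains valid verbatim with any Hermitian holomorphic vector bundle on $M$, provided one inserts its rank in place of $r$ and a lower bound for its Weitzenb\"ock operator in place of $-K$.

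First I would fix an arbitrary $\ell\in\mathbb{N}\setminus\{0\}$ and form the bundle $E^{\otimes\ell}\otimes F$. Since $E$ is a line bundle, $E^{\otimes\ell}$ has rank $1$, so $E^{\otimes\ell}\otimes F$ has rank $s$. Next I would record the hypothesis $\mathfrak{Ric}_{p,q}^{E^{\otimes\ell}\otimes F}\geq -(\ell K_1+K_2)$, which supplies the curvature bound needed in Lemma \ref{lem:eigensection}.

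Then I would apply Corollary \ref{cor:estimate1} to $E^{\otimes\ell}\otimes F$ with the substitutions $r\rightsquigarrow s$ and $K\rightsquigarrow \ell K_1+K_2$. This yields exactly
$$\dim_{\mc}\mathcal{H}_{\leq\lambda}^{p,q}(M,E^{\otimes\ell}\otimes F)\leq s|M|\bigl(2\beta(2\lambda+\ell K_1+K_2)+2\gamma\bigr)^{\frac{\alpha}{\alpha-1}}\cdot\alpha^{\frac{\alpha}{(\alpha-1)^2}},$$
as required. Since the argument is a straight substitution into a previously established estimate, there is no genuine obstacle; the only thing to verify carefully is the rank computation and the fact that the Sobolev constants $\alpha,\beta,\gamma$ and the volume $|M|$ are intrinsic to the base and need not be altered when the coefficient bundle changes.
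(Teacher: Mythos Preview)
Your proposal is correct and matches the paper's approach: the paper states Corollary~\ref{cor:estimate2} as an immediate consequence of Corollary~\ref{cor:estimate1} without further argument, and your substitution $r\rightsquigarrow s$, $K\rightsquigarrow \ell K_1+K_2$ is exactly what is intended. Your observation that the Sobolev inequality \eqref{ineq:club} concerns only scalar functions, and hence the constants $\alpha,\beta,\gamma$ are independent of the coefficient bundle, is the key point making the substitution legitimate.
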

\indent Note that the assumption $M$ is K\"ahler in Corollary \ref{cor:estimate2} is not essential, and can be removed by a suitable modification.
Since $\alpha$ can be taken to $n/(n-1)$ when $n\geq 2$, then for $(p,q)=(n,0)$, Corollary \ref{cor:estimate2} can be regarded as a generalization and a  more precise version of \cite[Theorem 1.1]{B02}. \\
\indent Similar to Lemma \ref{lem:eigensection}, we can derive a $C^1$-estimate for eigensections.
\begin{lem}\label{lem:eigensection derivative}
Under the same assumptions and notations of Theorem \ref{thm: heat kernel}. Let \(\lambda \geq 0\) be an eigenvalue of $\square_{p,q}$, and \(\phi\) be a
corresponding eigensection.
 Then 
\begin{equation*}
\sup_{M} (|\bar\partial\phi|^{2}+|\bar\partial^*\phi|^{2}) \leq \lambda (2\beta(2\lambda+K_0)+2\gamma)^{\frac{\alpha}{\alpha-1}}\cdot \alpha^{\frac{\alpha}{(\alpha-1)^2}}\int_{M} |\phi|^{2},
\end{equation*}
where $K_0:=\max\{K_+,K_{-}\}.$
\end{lem}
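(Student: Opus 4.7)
The plan is to reduce the $C^1$-estimate to the $C^0$-estimate from Lemma \ref{lem:eigensection} by exploiting the fact that $\bar\partial$ and $\bar\partial^*$ commute with the $\bar\partial$-Laplacian. First, I would observe that since $\square_{p,q}\phi=\lambda\phi$, applying $\bar\partial$ (resp.\ $\bar\partial^*$) to both sides and using $[\bar\partial,\square]=0$ and $[\bar\partial^*,\square]=0$ yields
\begin{equation*}
\square_{p,q+1}(\bar\partial\phi)=\lambda\,\bar\partial\phi,\qquad \square_{p,q-1}(\bar\partial^*\phi)=\lambda\,\bar\partial^*\phi.
\end{equation*}
Thus $\bar\partial\phi$ is an eigensection of $\square_{p,q+1}$ on $\Lambda^{p,q+1}T^*M\otimes E$ with eigenvalue $\lambda$, and likewise $\bar\partial^*\phi$ is an eigensection of $\square_{p,q-1}$ on $\Lambda^{p,q-1}T^*M\otimes E$ with eigenvalue $\lambda$.

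Next, I would apply Lemma \ref{lem:eigensection} to each of these two eigensections, using the curvature lower bounds $\mathfrak{Ric}_{p,q+1}^E\geq -K_+$ and $\mathfrak{Ric}_{p,q-1}^E\geq -K_-$ in place of the $K$ there. Bounding $K_+,K_-\leq K_0$ and using that the Sobolev-type inequality (\clubtag) is unchanged when we vary the bundle (it only involves scalar functions), I obtain
\begin{equation*}
\sup_M|\bar\partial\phi|^2\leq (2\beta(2\lambda+K_0)+2\gamma)^{\frac{\alpha}{\alpha-1}}\alpha^{\frac{\alpha}{(\alpha-1)^2}}\int_M|\bar\partial\phi|^2,
\end{equation*}
and the analogous inequality for $\bar\partial^*\phi$.

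Finally I would add the two inequalities, using $\sup(f+g)\leq\sup f+\sup g$ on the left, and observe that by integration by parts
\begin{equation*}
\int_M|\bar\partial\phi|^2+\int_M|\bar\partial^*\phi|^2=\int_M\langle\square_{p,q}\phi,\phi\rangle=\lambda\int_M|\phi|^2.
\end{equation*}
Combining these yields the claimed estimate. There is no real obstacle here beyond checking that the Moser iteration in Lemma \ref{lem:eigensection} goes through verbatim for $(p,q\pm 1)$-forms, since the argument there only used Bochner--Weitzenb\"ock together with (\clubtag) and a lower bound on the relevant Weitzenb\"ock operator; the only adjustment is to replace $K$ by $K_0$, which is automatic.
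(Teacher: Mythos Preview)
Your proof is correct and follows essentially the same approach as the paper: both use the commutation of $\bar\partial,\bar\partial^*$ with $\square$, the Bochner--Weitzenb\"ock lower bounds $\mathfrak{Ric}_{p,q\pm 1}^E\geq -K_0$, the Moser iteration from Lemma~\ref{lem:eigensection}, and the integration-by-parts identity $\int_M(|\bar\partial\phi|^2+|\bar\partial^*\phi|^2)=\lambda\int_M|\phi|^2$. The only cosmetic difference is that the paper sets $v:=|\bar\partial\phi|^2+|\bar\partial^*\phi|^2$ and runs the iteration once on $v$, whereas you invoke Lemma~\ref{lem:eigensection} separately on $\bar\partial\phi$ and $\bar\partial^*\phi$ and then add; since $\sup(f+g)\leq\sup f+\sup g$, this yields the identical constant.
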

\begin{proof}
Note that the commutation relations
$$\square_{p,q}\bar\partial=\bar\partial\square_{p,q},\ \square_{p,q}\bar\partial^*=\bar\partial^*\square_{p,q}.$$
Set 
$$v:=|\bar\partial\phi|^2+|\bar\partial^*\phi|^2.$$
By the Bochner-Weitzenb\"ock formula,   
$$\Delta v\geq -2(2\lambda+K_0)v.$$
Following the proof of Lemma \ref{lem:eigensection}, we obtain 
$$\sup|v|\leq (2\beta(2\lambda+K_0)+2\gamma)^{\frac{\alpha}{\alpha-1}}\cdot \alpha^{\frac{\alpha}{(\alpha-1)^2}}\int_{M} v.$$
Integration by parts gives 
$$\int_M v=\lambda\int_{M}|\phi|^2,$$
completing the proof.
\end{proof}
\begin{remark}
 When $E$ is trivial and $(p,q)=(n,0)$, we cannot estimate $\nabla\phi$ directly
 as in \cite[Lemma 2.3]{LZZ21} without  a bound on  the Ricci curvature of $M$.  However, the estimates 
 in Lemma \ref{lem:eigensection derivative} suffice for our purposes.
\end{remark}

\indent To prove Theorem \ref{thm: heat kernel}, we need the following $C^1$-estimates for  solutions to the heat equation
associated with $\square_{p,q}$.
\begin{lem}\label{lem:  the solution of heat equation}
    Under the same assumptions and notations of Theorem \ref{thm: heat kernel}. Let  $\phi(t,x)$ be a $C^3$-smooth $E$-valued $(p,q)$-form
    satisfying the heat equation on $(0,T]\times M$ for some $T>0$,  i.e.,
   \begin{equation*}
      \left(\frac{\partial}{\partial t}+\square_{p,q}\right) \phi =0.
   \end{equation*}
   For any fixed point $O\in M$, the following estimate holds:
     
   \begin{align*}
      &\quad \sup_{[63T/64, T]\times B_{\sqrt{T}/8}}\left(|\bar\partial_x\phi(t,x)|^2+|\bar\partial_x^*\phi(t,x)|^2\right)\\
      & \leq C|B_{\sqrt{T}/2}|\left(KT+1\right)
\left(K_0+1+\frac{\gamma}{\beta}+\frac{1}{T}\right)^{\frac{2\alpha-1}{\alpha-1}}\sup_{[3T/4, T]\times B_{\sqrt{T}/2}} |\phi|^2,
   \end{align*}
   where 
   $$C:=16^{\frac{20\alpha^2}{(\alpha-1)^2}}\beta^{\frac{\alpha}{\alpha-1}}n,\ K_0:=\max\{K_+,K_{-}\},\ B_s:=B(O,s).$$
\end{lem}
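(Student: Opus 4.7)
The plan is to obtain the estimate by running a standard parabolic Moser iteration for $v := |\bar\partial_x\phi|^2 + |\bar\partial_x^*\phi|^2$, combined with a Caccioppoli-type energy identity that converts an $L^2$-norm of $\nabla\phi$ into a supremum of $|\phi|^2$.

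\textbf{Step 1 (a sub-heat equation for $v$).} Since $\square_{p,q}$ commutes with $\bar\partial$ and $\bar\partial^*$, the sections $\bar\partial\phi$ and $\bar\partial^*\phi$ satisfy the heat equations governed by $\square_{p,q+1}$ and $\square_{p,q-1}$ respectively. Combining the Bochner-Weitzenb\"ock identity with the lower bounds $\mathfrak{Ric}_{p,q\pm1}^E\geq -K_\pm$, exactly as in the proof of Lemma \ref{lem:eigensection derivative}, one gets
\[
\left(\partial_t - \tfrac{1}{2}\Delta\right)v \leq K_0 v.
\]

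\textbf{Step 2 (Caccioppoli converting $\int v$ to $\sup|\phi|^2$).} Applying the Bochner-Weitzenb\"ock identity to $\phi$ itself under $\mathfrak{Ric}_{p,q}^E\geq -K$ gives $(\partial_t - \tfrac{1}{2}\Delta)|\phi|^2 \leq K|\phi|^2 - |\nabla\phi|^2$. Testing this with a purely spatial cutoff $\eta$ that equals $1$ on $B_{\sqrt{T}/4}$ and is supported in $B_{\sqrt{T}/2}$ with $|\nabla\eta|\lesssim 1/\sqrt T$, then integrating over $[3T/4,T]$ and absorbing the cross term $\int 2\eta\nabla\eta\cdot\nabla|\phi|^2$ via $|\nabla|\phi|^2|\leq 2|\phi||\nabla\phi|$ and Cauchy-Schwarz, yields
\[
\int_{3T/4}^{T}\!\!\int_{B_{\sqrt T/4}} |\nabla\phi|^2 \leq C_n\,|B_{\sqrt T/2}|\,(KT+1)\sup_{[3T/4,T]\times B_{\sqrt T/2}}|\phi|^2.
\]
Since $v\leq C_n|\nabla\phi|^2$ pointwise, this provides the initial $L^1$ bound for the Moser iteration, and it is the sole origin of the factor $(KT+1)$ in the final estimate.

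\textbf{Step 3 (parabolic Moser iteration for $v$).} Testing the inequality from Step 1 against $\eta^2 v^{p-1}$ for a space-time cutoff $\eta$ and integrating by parts produces the usual parabolic energy estimate
\[
\sup_t \int \eta^2 v^p \;+\; \tfrac{p-1}{p^2}\iint \eta^2|\nabla v^{p/2}|^2 \;\leq\; C\iint\bigl(|\nabla\eta|^2+|\partial_t\eta|+K_0\eta^2\bigr)\,v^p.
\]
Combining the Sobolev inequality (\clubtag) on time slices with the $\sup_t L^2$-control and interpolating gives the one-step reverse H\"older bound
\[
\|v\|_{L^{p\kappa}(Q')}^{p\kappa} \leq C^p\Bigl(K_0+\tfrac{\gamma}{\beta}+\tfrac{1}{T}+1\Bigr)^{p}\beta^{p}\,\|v\|_{L^p(Q)}^{p\kappa},
\qquad \kappa=\tfrac{2\alpha-1}{\alpha}.
\]
Iterating this estimate on a shrinking sequence of parabolic cylinders interpolating from $[3T/4,T]\times B_{\sqrt T/4}$ down to $[63T/64,T]\times B_{\sqrt T/8}$, starting from $p=1$ with the $L^1$-bound of Step 2, produces
\[
\sup_{[63T/64,T]\times B_{\sqrt T/8}} v \;\leq\; C_{\mathrm{iter}}\Bigl(K_0+1+\tfrac{\gamma}{\beta}+\tfrac{1}{T}\Bigr)^{\frac{2\alpha-1}{\alpha-1}}\,\frac{1}{|B_{\sqrt T/4}|T}\iint_{Q_0} v,
\]
where the exponent $(2\alpha-1)/(\alpha-1)$ arises as $\sum_{k\geq 0}\kappa^{-k}=\alpha/(\alpha-1)$ times the single-step exponent on $K_0+1+\gamma/\beta+1/T$, and $C_{\mathrm{iter}}$ collects the factor $\beta^{\alpha/(\alpha-1)}$ together with the summands $\sum k\kappa^{-k}$ and $\sum 2^{2k}\kappa^{-k}$ that control the accumulated logarithms and the rescalings needed to shrink the radii by factor $2$ at each step. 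Plugging in Step 2 concludes the proof.

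\textbf{Main obstacle.} The conceptual structure is standard, but the numerical bookkeeping in Step 3 is delicate. One has to verify that (i) the volume/time factors $|B_{\sqrt T/4}|T$ exactly cancel against the right powers of the cutoff norms accumulated by the iteration; (ii) the exponent on $K_0+1+\gamma/\beta+1/T$ is precisely $\frac{2\alpha-1}{\alpha-1}$ rather than a slightly larger power; and (iii) the explicit prefactor $16^{20\alpha^2/(\alpha-1)^2}\beta^{\alpha/(\alpha-1)}n$ materializes, which forces one to track the constants $C^{\sum \kappa^{-k}}$, $\alpha^{\sum k\kappa^{-k}}$, and the geometric-radius factor $16^{\sum k\kappa^{-k}}$ separately and regroup them carefully.
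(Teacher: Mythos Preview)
Your approach is essentially the paper's: the same sub-heat inequality for $v$, the same Caccioppoli step (the paper invokes \cite{GM75} for $|\bar\partial\phi|^2+|\bar\partial^*\phi|^2\leq 2n|\nabla\phi|^2$, which is where the factor $n$ comes from), and parabolic Moser iteration driven by the global Sobolev inequality (\clubtag).

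There is, however, one concrete gap in Step~3. Your displayed energy estimate carries the coefficient $\tfrac{p-1}{p^2}$ in front of $\iint\eta^2|\nabla v^{p/2}|^2$, which vanishes at $p=1$; hence the iteration cannot be launched from $p=1$ as you propose. The paper circumvents this in the standard way: it tests $(\partial_t-\tfrac12\Delta)v^p\leq pK_0v^p$ against $\psi^2v^p$ (so the working norm is $\|v\|_{L^{2p}}$), runs Moser from $p=1$ to obtain $\sup_{Q_0}v^2\leq C\iint_{Q_1}v^2$, and then performs a second short iteration of the form
\[
\sup_{Q_j}v^2\;\leq\; C_j\bigl(\sup_{Q_{j+1}}v^2\bigr)^{1/2}\iint_{Q_\infty} v,
\]
which telescopes to the $L^1$--$L^\infty$ bound $\sup v\leq C\iint v$. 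This two-stage structure is the piece missing from your outline.

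One bookkeeping correction: because (\clubtag) is a \emph{global} Sobolev inequality on $M$, the Moser output carries no $1/|B_{\sqrt T/4}|$ normalization, and the $1/T$ does not appear as a separate prefactor either---it sits inside $(K_0+1+\gamma/\beta+1/T)^{(2\alpha-1)/(\alpha-1)}$ via the cutoff gradient $|\nabla\psi|^2\sim 1/r^2$ with $r=\sqrt T/4$. The factor $|B_{\sqrt T/2}|$ in the statement comes entirely from the Caccioppoli step, so the cancellation you anticipate in your obstacle~(i) does not occur and is not needed.
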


\begin{proof}
 This proof is motivated by the proof of \cite[Proposition 3.3]{LZZ21}.
   In the following, only the operator 
  $\partial_t$ acts on the time variable $t$, and all other operators act on the spatial variable. 
 Fix a $T>0$.
 For positive numbers $r,\delta,\sigma$ such that 
 $$2r<\sqrt{T},\ \delta<\sigma\leq 4,$$
  let $\eta\in C^\infty(\mr)$ be a cut-off function satisfying 
  $$0\leq \eta\leq 1,\ \eta \equiv  0\text{ on }[0, T - \sigma r^2],\ \eta \equiv 1\text{ on }[T - \delta r^2, T],\ |\eta'| \leq \frac{2}{(\sigma - \delta)r^2}.$$
   For $0<\mu<\nu\leq 2$, let $\xi\in C^\infty(\mr)$ be a cut-off function 
  $$0\leq \xi\leq 1,\ \xi \equiv  1\text{ on }[0, \mu r],\ \xi=0\text{ on }[\nu r,\infty),\ |\xi'| \leq \frac{2}{(\nu - \mu)r}.$$
  Set $\psi(t,x) := \xi(d(O,x))\eta(t).$
  
  Set 
$$v:=|\bar\partial\phi|^2+|\bar\partial^*\phi|^2,$$
 then by the Bochner-Weitzenb\"ock formula and the assumption, we get  
$$\left(\frac{\partial}{\partial t}-\frac{\Delta}{2}\right)v\leq K_0v.$$
For $p\geq 1$, this implies 
\begin{equation}\label{equ: v^p-1}
\left(\frac{\partial}{\partial t}-\frac{\Delta}{2}\right)v^p\leq pK_0v^p.
\end{equation}
 Multiplying both sides of \eqref{equ: v^p-1} by $\psi^2 v^p$ and taking integral over $(0,T]\times M$ yields
  \begin{equation}\label{equ: v^p-1  2}
    \int_{(0,T]\times M} \psi^2 v^p \left(\frac{\partial}{\partial t}-\frac{\Delta}{2}\right) v^p \leq pK_0\iint_{(0,T]\times M} \psi^2 v^{2p},
  \end{equation}
  where we have omitted the volume element for simplicity of notations. 
On the other hand, integration by parts gives 
\begin{align*}
&\quad \iint_{(0,T]\times M}\psi^2v^p\left(\frac{\partial}{\partial t}-\frac{\Delta}{2}\right)v^p+\iint_{(0,T]\times M}\psi v^{2p}\frac{\partial\psi}{\partial t}\\
&=\frac{1}{2}\int_{M}\psi(T,x)^2|v(T,x)|^{2p}+\frac{1}{2}\iint_{(0,T]\times M}\left(|\nabla(\psi v^p)|^2-v^{2p}|\nabla\psi|^2\right),
\end{align*}
then we get 
\begin{align}\label{equ: v^p-1  3}
&\quad \frac{1}{2}\int_{M}\psi(T,x)^2|v(T,x)|^{2p}+\frac{1}{2}
\iint_{(0,T]\times M}|\nabla(\psi v^p)|^2\\
&\leq pK_0\iint_{(0,T]\times M}\psi^2v^{2p}+\iint_{(0,T]\times M}v^{2p}\left(\psi\frac{\partial\psi}{\partial t}+|\nabla\psi|^2\right).\nonumber
\end{align}
Since $T$ was arbitrary, then by the definition of $\psi$, we know 
\begin{align}\label{equ:definition}
&\quad \max\left\{\frac{1}{2}\sup_{t\in [T-\delta r^2,T]}\int_{B_{\mu r}}|v(t,x)|^{2p},\frac{1}{2}\iint_{(0,T]\times M}|\nabla(\psi v^p)|^2\right\}\\
&\leq \left(pK_0+\frac{4}{(\sigma - \delta)r^2} + \frac{4}{(\nu - \mu)^2 r^2} \right)\iint_{[T-\sigma r^2,T]\times B_{\mu r}}v^{2p}\nonumber.
\end{align}
Applying the Sobolev-type inequality \eqref{ineq:club}, we obtain 
$$\left(\int_{M} (\psi v^p)^{2\alpha }\right) ^{1/\alpha} \leq \beta\int_{M} |\nabla (\psi v^p)|^{2} + \gamma \int_{M} (\psi v^p)^{2 }, $$
i.e. 
$$\left(\int_{B_{\mu r}} (\psi v^p)^{2\alpha }\right) ^{1/\alpha} \leq \beta\int_{B_{\nu r}} |\nabla (\psi v^p)|^{2} + \gamma \int_{B_{\nu r}} v^{2p}. $$
Combined with \eqref{equ:definition}, this yields 
\begin{align*}
&\quad \int_{0}^T\left(\int_{B_{\mu r}} (\psi v^p)^{2\alpha }\right) ^{1/\alpha}ds\\
&\leq \left(2p\beta K_0+\gamma+ \frac{8\beta}{(\sigma - \delta)r^2} + \frac{8\beta}{(\nu - \mu)^2 r^2} \right)\iint_{[T-\sigma r^2,T]\times B_{\nu r}}v^{2p}. 
\end{align*}
Using H\"older's inequality and \eqref{equ:definition}, we know 
\begin{align}
&\quad \iint_{[T-\delta r^2, T]\times B_{\mu r}}(\psi^2v^{2p})^{\left(2-\frac{1}{\alpha}\right)}\\
&\leq \int_{T-\delta r^2}^T \left(\int_{B_{\mu r}}(\psi v^{p})^{2\alpha}\right)^{1/\alpha}
\left(\int_{B_{\mu r}}\psi^2v^{2p}\right)^{1-1/\alpha}ds\nonumber\\
&\leq \int_{T-\delta r^2}^T \left(\int_{B_{\mu r}}(\psi v^{p})^{2\alpha}\right)^{1/\alpha}ds\cdot \sup_{[T-\delta r^2,T]}\left(\int_{B_{\mu r}}v^{2p}\right)^{1-1/\alpha}
\nonumber\\
&\leq  C_0p^{2}\left(\int_{[T-\delta r^2,T]\times B_{\nu r}}v^{2p}\right)^{2-1/\alpha},\nonumber
\end{align}
where 
$$C_0:=32\beta \left(K_0+1+\frac{\gamma}{\beta}+\frac{1}{(\sigma - \delta)r^2} + \frac{1}{(\nu - \mu)^2 r^2}\right)^{2-1/\alpha}.$$
By the definition of $\psi$ again, we know 
\begin{align}\label{equ:comparison}
\iint_{[T-\delta r^2, T]\times B_{\mu r}}(v^{2p})^{\left(2-\frac{1}{\alpha}\right)}
\leq C_0p^{2}\left(\int_{[T-\sigma r^2,T]\times B_{\nu r}}v^{2p}\right)^{2-1/\alpha}.
\end{align}
Set
$$\theta:=2-\frac{1}{\alpha}.$$
For  $ 0<\tau\leq 1,$ and $k=1,2\cdots,$ set 
$$p:=\theta^{k-1},\ \sigma:=1+ \frac{(1+\tau)^2 - 1}{2^{k-1}},\ \delta := 1 + \frac{(1+\tau)^2 - 1}{2^k},$$
$$\mu:=1 + \frac{\tau}{2^k},\ \nu:=1 + \frac{\tau}{2^{k-1}},\ r_k:=(1 + \frac{\tau}{2^k})r\geq r,\ I_k:=[T-\delta r^2,T].$$
From \eqref{equ:comparison}, we conclude 
\begin{align}
\iint_{I_k\times B_{r_k}}v^{2\theta^k}
\leq C_1\theta^{2k}16^k\left(\int_{I_{k-1}\times B_{r_{k-1}}}v^{2\theta^{k-1}}\right)^{\theta},
\end{align}
where
$$C_1:=32\beta \left(K_0+1+\frac{\gamma}{\beta}+\frac{2}{\tau^2 r^2}\right)^{2-1/\alpha}.$$
Applying Moiser iteration as in Lemma \ref{lem:eigensection}, we obtain 
\begin{equation}\label{equ:comparision1}
\sup_{[T - r^2, T]\times B_r} v^2 \leq C_2
\iint_{[T-(r+\tau r)^2,T]\times B_{r+\tau r}}v^2,
\end{equation}
where 
$$C_2:=16^{\frac{5\alpha^2}{(\alpha-1)^2}}
\beta^{\frac{\alpha}{\alpha-1}}\left(K_0+1+\frac{\gamma}{\beta}+\frac{2}{\tau^2 r^2}\right)^{\frac{2\alpha-1}{\alpha-1}}.$$
Next, for $j=0,1,2\cdots,$ set
$$r_j := (\sum_{k=0}^j 2^{-k})r\geq r,\ Q_j := [T - r_j^2, T]\times B_{r_j},\ \tau := 2^{-(j+1)},$$
then 

$$Q_0 = [T - r^2, T]\times B_r \subset Q_1 \subset \cdots  \subset [T - (2r)^2, T]\times B_{2r}.$$ 
By \eqref{equ:comparision1}, 
\begin{align*}
   \sup_{Q_j} v^2 \leq C_34^{\frac{2(j+2)\alpha}{(\alpha-1)}}\iint_{Q_{j+1}} v^2\leq C_34^{\frac{2(j+2)\alpha}{(\alpha-1)}}\left(\sup_{Q_{j+1}} v^2\right)^{\frac{1}{2}} \iint_{[T - (2r)^2, T]\times B_{2r}} v,
\end{align*}
where 
$$C_3:=16^{\frac{5\alpha^2}{(\alpha-1)^2}}\beta^{\frac{\alpha}{\alpha-1}}\left(K_0+1+\frac{\gamma}{\beta}+\frac{1}{r^2}\right)^{\frac{2\alpha-1}{\alpha-1}}.$$
Iterating again,  and letting $j\rightarrow \infty$, we get
\begin{equation*}
\sup_{[T-r^2,T]\times B_r} v^2 \leq 16^{\frac{6\alpha}{\alpha-1}}C_3^2\left(\iint_{[T - (2r)^2, T]\times B_{2r}} v\right) ^2,
\end{equation*}
i.e.,
\begin{equation}\label{equ:1key}
\sup_{[T-r^2,T]\times B_r} v \leq 16^{\frac{3\alpha}{\alpha-1}}C_3\iint_{[T-(2r)^2,T]\times B_{2r}} v.
\end{equation}
\indent At last, let us bound the right hand side of the previous inequality.
 Set $r:=\sqrt{T}/4$, let $\chi\in C^\infty(\mr)$ be a cut-off function such that
 $$0\leq \chi\leq 1,\ \chi \equiv  1\text{ on }[0, r],\ \chi=0\text{ on }[2r,\infty),\ |\chi'| \leq \frac{2}{r}.$$
Set $\rho(x):=\chi(d(O,x)).$
Using integration by parts, Kato's inequality, and Cauchy-Schwarz inequality, we get 
\begin{align}\label{equ:parts}
\int_{M}\Delta|\phi|^2\rho^2&=-4\int_{M}\rho|\phi|\nabla|\phi|\cdot\nabla\rho\leq 4\int_{M}\rho|\phi|\cdot |\nabla\phi|\cdot|\nabla\rho|\\
&\leq \int_{M}|\nabla\phi|^2\rho^2+4\int_{M}|\phi|^2|\nabla\rho|^2.\nonumber
\end{align}
The Bochner-Weitzenb\"ock formula and the assumption $\mathfrak{Ric}_{p,q}^E\geq -K$ imply 
$$\left(\frac{\partial}{\partial t}-\frac{\Delta}{2}\right)|\phi|^2\leq K|\phi|^2-|\nabla\phi|^2.$$ 
Using \eqref{equ:parts} and integration by parts, we get 
\begin{align*}
   0&\geq \iint_{[3T/4, T]\times M}\left(\rho^2\left(\frac{\partial}{\partial t} - \frac{\Delta}{2}\right) |\phi|^2-K\rho^2|\phi|^2+\rho^2|\nabla\phi|^2\right)\\
   &\geq \iint_{[3T/4, T]\times M}\rho^2\frac{\partial}{\partial t}|\phi|^2\\
   &\quad  +\iint_{[3T/4, T]\times M}\left(\frac{1}{2}|\nabla\phi|^2\rho^2-2|\phi|^2|\nabla\rho|^2-K\rho^2|\phi|^2\right)\\
  &\geq \iint_{[3T/4, T]\times M}\left(\frac{1}{2}|\nabla\phi|^2\rho^2-2|\phi|^2|\nabla\rho|^2-K\rho^2|\phi|^2\right)-\int_{M}|\phi(3T/4,x)|^2\rho^2.
\end{align*}
According to Lemma 6.8 of \cite{GM75} (see also \cite[Lemma 4.1]{EGHP23}), we have  
  $$|\bar\partial\phi|^2+|\bar\partial^*\phi|^2\leq 2n|\nabla\phi|^2.$$
so we have (by definition of $\rho$)
\begin{align}\label{equ:2key}
   &\quad \frac{1}{4n}\iint_{[3T/4, T]\times B_{\sqrt{T}/4} }\left(|\bar\partial\phi|^2+|\bar\partial^*\phi|^2\right)\\
   &\leq \iint_{[3T/4, T]\times B_{\sqrt{T}/2}}\left(\frac{128|\phi|^2}{T}+K|\phi|^2\right)+\int_{B_{\sqrt{T}/2}}|\phi(3T/4,x)|^2.\nonumber
\end{align}

Combining \eqref{equ:1key} and \eqref{equ:2key} yields

  $$\sup_{ [63T/64, T]\times B_{\sqrt{T}/8}}\left(|\bar\partial\phi|^2+|\bar\partial^*\phi|^2\right) \leq C_4\sup_{[3T/4, T]\times B_{\sqrt{T}/2}} |\phi|^2,$$
where 
$$C_4:=16^{\frac{20\alpha^2}{(\alpha-1)^2}}\beta^{\frac{\alpha}{\alpha-1}}n|B_{\sqrt{T}/2}|\left(KT+1\right)
\left(K_0+1+\frac{\gamma}{\beta}+\frac{1}{T}\right)^{\frac{2\alpha-1}{\alpha-1}}.$$
\end{proof}
\begin{remark}
The compactness assumption on $M$ is not necessary for  Lemma \ref{lem:  the solution of heat equation}. 
\end{remark}
Lemma \ref{lem:  the solution of heat equation} implies  the following $C^1$-estimates for harmonic forms, which we think has some independent interest.
\begin{cor}\label{cor:harmonic form}
  Let $M$ be a complete K\"ahler manifold of complex dimension $n$ satisfying the Sobolev-type inequality (\clubtag).
  Fix $p,q\in\{0,\cdots,n\}.$ Let $E$ be a Hermitian holomorphic vector bundle over $M$ such that there are constants $K\geq 0, K_{+}\geq 0,
  K_{-}\geq 0$ satisfying 
  $$\mathfrak{Ric}_{p,q}^E\geq -K,\ \mathfrak{Ric}_{p,q+1}^E\geq -K_+,\ \mathfrak{Ric}_{p,q-1}^E\geq -K_{-}.$$
   Suppose $\phi$ is a harmonic $E$-valued $(p,q)$-form on $M$. Then for any ball $B(O,r)\subset M$, we have 
   $$\sup_{B_{r/4}}(|\bar\partial\phi|^2+|\bar\partial^*\phi|^2)
   \leq C|B_r|\left(4Kr^2+1\right)
\left(K_0+1+\frac{\gamma}{\beta}+\frac{1}{r^2}\right)^{\frac{2\alpha-1}{\alpha-1}}\sup_{B_r}|\phi|^2,$$
   where 
   $$K_0:=\max\{K_+,K_{-}\},\ C:=100^{\frac{40\alpha^2}{(\alpha-1)^2}}\beta^{\frac{\alpha}{\alpha-1}}n,\ B_s:=B(O,s).$$
\end{cor}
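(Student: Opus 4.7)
The strategy is to view the harmonic form $\phi$ as a stationary (time-independent) solution of the heat equation and invoke the interior $C^1$-estimate from Lemma \ref{lem:  the solution of heat equation}. The remark immediately following that lemma explicitly notes that the compactness of $M$ is not needed for its proof, so it applies verbatim on our complete Kähler manifold.

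Concretely, I would set $\tilde\phi(t,x) := \phi(x)$ on $(0,\infty) \times M$. By elliptic regularity $\phi$ is smooth, hence $\tilde\phi$ is smooth in both variables, and since $\square_{p,q}\phi = 0$ one has $(\partial_t + \square_{p,q})\tilde\phi = 0$. I would then choose $T := 4r^2$, so that $\sqrt{T}/8 = r/4$ and $\sqrt{T}/2 = r$, matching exactly the spatial balls in the lemma with those appearing in the corollary.

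Applying Lemma \ref{lem:  the solution of heat equation} to $\tilde\phi$ with this value of $T$, the suprema over the time intervals $[63T/64,T]$ and $[3T/4,T]$ collapse to pure spatial suprema since $\tilde\phi$ is $t$-independent, yielding
$$\sup_{B_{r/4}}\bigl(|\bar\partial\phi|^2 + |\bar\partial^*\phi|^2\bigr) \leq C_1 |B_r|(4Kr^2+1)\Bigl(K_0+1+\tfrac{\gamma}{\beta}+\tfrac{1}{4r^2}\Bigr)^{\frac{2\alpha-1}{\alpha-1}} \sup_{B_r} |\phi|^2$$
with $C_1 = 16^{20\alpha^2/(\alpha-1)^2}\beta^{\alpha/(\alpha-1)}n$. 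Bounding $1/(4r^2)\leq 1/r^2$ in the middle factor and absorbing $C_1$ into the coarser prefactor $100^{40\alpha^2/(\alpha-1)^2}\beta^{\alpha/(\alpha-1)}n$ (which is strictly larger since $100^{40}>16^{20}$ and the exponent has been doubled) gives the desired inequality. The only conceptual subtlety is the permissibility of applying Lemma \ref{lem:  the solution of heat equation} in the non-compact setting, which is precisely the content of the remark, so no genuine obstacle arises.
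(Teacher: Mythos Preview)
Your proposal is correct and follows precisely the route the paper indicates: the corollary is stated as an immediate consequence of Lemma \ref{lem:  the solution of heat equation} (with the accompanying remark dropping compactness), and your choice $T=4r^2$ together with the constant bookkeeping is exactly the intended application.
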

\begin{remark}
If $\phi$ is harmonic only harmonic in $B(O,r)$, a similar conclusion of Corollary \ref{cor:harmonic form} holds,
please see \cite[Theorem 3.1]{DHQ25}. 
\end{remark}

Now we prove Theorem \ref{thm: heat kernel}.
\begin{proof}
   (i) By \cite[Theorem 4.3]{LX10}, we have 
   \begin{equation}\label{estimate1}
   |H_{p,q}(t,x,y)|\leq e^{Kt}H(t,x,y),\ \forall (t,x,y)\in \mr_+\times M\times M,
   \end{equation}
   where $H(t,x,y)$ is the heat kernel for functions.
   Now it is well known Sobolev-type inequality (\clubtag)  implies a Gaussian upper bound for $H$. This idea is due to \cite{D90}
 (see also \cite{SC02} and \cite{Z11}). For completeness, we sketch a proof here. The argument follows 
   the presentation of \cite[Section 5]{GPSS23}.\\
  \indent  By approximation, we may assume $\gamma\neq 0$. Let $u_0 \in C^\infty(M)$ be an arbitrary positive smooth function on $M$. For a given $b \in \mathbb{R}$ and an arbitrary Lipschitz function $\phi$ with $\|\phi\|_{L^\infty(M)} \leq 1$, we consider the function $u(t,x)$ defined by
    \begin{equation}\label{equ: u_0}
    u(t,x) := e^{-b \phi(x)} \int_M H(t,x,y) e^{b \phi(y)} u_0(y) dV(y),
    \end{equation}
    which is positive and satisfies the equation
    
    \begin{equation}
    \frac{\partial u}{\partial t} = e^{-b \phi} \Delta(u e^{b \phi}), \quad u(0,x) = u_0(x),\ \forall x\in M.
    \end{equation}
    Fix any time $T > 0$. We define an increasing, piecewise smooth, and continuous function $r(t)$ of $t \in (0,T)$ by
    
    $$
    r = r(t) =
    \begin{cases}
    \frac{2^{1/2} - 1}{(T/2)^{1/2}} t^{1/2} + 1, & \text{if } t \in [0, T/2), \\
    \frac{T^{1/2}}{(T-t)^{1/2}}, & \text{if } t \in [T/2, T),
    \end{cases}
    $$
    which satisfies $r(0) = 1$ and $r(T^-) = \infty$. Denote $\|u\|_r$ to be the $L^r(M)$-norm of $u$. By straightforward calculations, we have
    (see \cite[Formula (5.6)]{GPSS23})
   \begin{equation}\label{eqn: the estimate of log u}
     \frac{\partial}{\partial t} \ln \|u\|_r \leq \frac{4(1-r)}{r^2} \int_M |\nabla v|^2 + \frac{r'}{r^2} \int_M v^2 \ln v^2
     +\frac{b^2(r-2)^2}{2(r-1)}+b^2.
   \end{equation}
where 
$$v := \frac{u^{r/2}}{\|u^{r/2}\|_2},\ r':=\frac{dr}{dt},$$
 and $\delta \in (0,1)$ is a constant to be chosen later. Note that $\|u^{r/2}\|_2^2 = \|u\|_r^r$ and $\|v\|_2 = 1$.
%The following inequality is elementary:
%$$\ln x\geq \sigma x-1-\ln \sigma,\ \forall \sigma>0,x>0.$$
We next apply Jensen's inequality and the Sobolev-type inequality (\clubtag): 
\begin{align*}
   \int_M v^2 \ln v^2 &= \frac{1}{\alpha-1} \int_M v^2 \ln\left( v^{2(\alpha -1)}\right)\\
   &=\frac{\alpha}{\alpha-1}\ln \left(\int_M v^{2\alpha }\right)^{\frac{1}{\alpha}}\\ 
   &\leq \frac{\alpha}{\alpha-1} \ln \left( \int_M \left(\beta  |\nabla v|^2 +\gamma v^2 \right) \right)\\
   &=\frac{\alpha}{\alpha-1} \ln \left(\beta\int_M|\nabla v|^2 +\gamma \right).
\end{align*}
Combining this with \eqref{eqn: the estimate of log u}, we get 

\begin{equation}\label{equ: the estimate of log u 2}
   \begin{aligned}
      \frac{\partial}{\partial t} \ln \|u\|_r &\leq \frac{4(1-r) }{r^2} \int_M |\nabla v|^2   + \frac{r'}{r^2} \frac{\alpha}{\alpha-1} \ln \left(\beta\int_M|\nabla v|^2 +\gamma \right)\\
      &\quad + \frac{b^2(r-2)^2}{2(r-1)}+b^2.
   \end{aligned} 
\end{equation}
Considering the following concave function 
$$
f(x) := \frac{4(1-r) }{r^2\beta } x + \frac{r'}{r^2} \frac{\alpha}{\alpha-1} \ln (x+\gamma),\ x\geq 0,
$$
achieves the maximum at the point
$$x := \frac{r' \alpha\beta }{4(r-1) (\alpha-1)} - \gamma,$$
then we get 
%%$$\frac{\partial}{\partial t} \ln \|u\|_r\leq \frac{r'}{r^2} \frac{\alpha}{\alpha-1} \ln \left( \frac{r' \alpha \beta}{4(r-1) (\alpha-1)\gamma} \right) + \frac{4(r-1) \gamma }{r^2\beta } - \frac{r'}{r^2} \frac{\alpha}{\alpha-1}+\frac{\alpha r'}{(\alpha-1)r^2}\ln\gamma.$$
\begin{equation}\label{equ:key inequality}
\frac{\partial}{\partial t} \ln \|u\|_r\leq \frac{r'}{r^2} \frac{\alpha}{\alpha-1} \ln \left( \frac{r'}{r-1} \right)+ \frac{4(r-1) \gamma }{r^2\beta } + \frac{r'}{r^2} \Psi+\frac{b^2(r-2)^2}{2(r-1)}+b^2,
\end{equation}
where the constant $\Psi$ is given by
$$
\Psi := \frac{\alpha }{\alpha-1} \ln \left(\frac{\alpha \beta }{4(\alpha-1)}\right) - \frac{\alpha}{\alpha-1}.
$$
On the other hand, by a direct computation, we know (see \cite{GPSS23})
$$\int_0^{T}\frac{r'}{r^2}\ln\left(\frac{r^2}{r-1}\right)dt\leq -\ln T+\frac{5}{2},$$
$$\int_0^T\frac{r'}{r^2}dt=1,\ \int_0^T\frac{(r-2)^2}{r-1}dt\leq \frac{21}{10}T,\ \int_0^T\frac{r-1}{r^2}dt\leq \frac{1}{5}T.$$
Integrating (\ref{equ:key inequality}), we know 
\begin{equation}\label{equ:estimate1}
\ln\left(\frac{\|u_T\|_\infty}{\|u_0\|_1}\right)\leq  \frac{\alpha}{\alpha-1}\left(-\ln T+\frac{5}{2}\right)
+\frac{4 \gamma T}{5\beta}+\Psi+\frac{41b^2}{20}T.
\end{equation}
where $u_T(x) := u(x,T)$. Since \eqref{equ:estimate1} holds for any positive function $u_0$, by duality and the representation formula \eqref{equ: u_0},
then we get 
\begin{equation}
H(x,y,T) \leq e^{\frac{3\alpha}{\alpha-1}}e^{\Psi} T^{-\frac{\alpha}{\alpha-1}}e^{\frac{4 \gamma T}{5\beta}+\frac{41b^2}{20}T} e^{b(\phi(x)-\phi(y))},\ \forall x,y\in M.
\end{equation}

For two fixed points $x,y \in M$, if we take 
$$b := \frac{10(\phi(y)-\phi(x))}{41}, $$
 we have 
\begin{equation}\label{equ: H_{0,0}(x,y,T)}
   H(T,x,y) \leq e^{\frac{3\alpha}{\alpha-1}}e^{\Psi} T^{-\frac{\alpha}{\alpha-1}}e^{\frac{4 \gamma T}{5\beta}} e^{-\frac{(\phi(x)-\phi(y))^2}{9T}}.
\end{equation}

Since \eqref{equ: H_{0,0}(x,y,T)} holds for any smooth function $\phi$ with $\|\nabla \phi\|_\infty \leq 1$, we can take a sequence of such functions which converge uniformly to $z \mapsto d(x,z)$. Therefore, from \eqref{equ: H_{0,0}(x,y,T)}, we conclude that for any $T > 0$, it holds that

\begin{equation}
   H(T,x,y) \leq e^{\frac{3\alpha}{\alpha-1}}e^{\Psi} T^{-\frac{\alpha}{\alpha-1}}e^{\frac{4 \gamma T}{5\beta}} e^{-\frac{(\phi(x)-\phi(y))^2}{9T}}.
\end{equation}
Combining this with \eqref{estimate1} gives the result.\\
   (ii) Suppose $0<\mu_1\leq\mu_2<\cdots$ are all nonzero eigenvalues of 
   $\square_{p,q}$, and $\phi_{b_{p,q}+1},\phi_{b_{p,q}+2},\cdots$ are the corresponding eigensections such that 
   they consist of an orthonormal basis of  the orthogonal complement of $\mathcal{H}_{p,q}(M,E)$ in $L^2(M,\Lambda^{p,q}T^*M\otimes E)$.
   By (i), there is a constant $C_0:=C_0(\alpha)>0$  such that 
   $$|H_{p,q}(t,x,x)|\leq C_0\beta^{\frac{\alpha}{\alpha-1}}e^{Kt+\frac{\gamma}{\beta}t}t^{-\frac{\alpha}{\alpha-1}},\ 
   \forall (t,x)\in\mr_+\times M.$$
   Similar to the proof of \cite[Corollary 4.6]{DL82}, it is easy to see that 
   \begin{equation}\label{equ:eigenvalues estimates}
   \mu_k\geq \frac{2c}{\beta}\left(r|M|e^{\frac{K}{\mu_1}+\frac{\gamma}{\beta\mu_1}}\right)^{\frac{1-\alpha}{\alpha}}k^{\frac{\alpha-1}{\alpha}},\ \forall k\geq 1,
   \end{equation}
   where $c>0$ is a constant only depending on $\alpha$.
   By Lemma \ref{lem:eigensection}, and shrinking $c$ if necessary, we have  
   $$\sup_M|\phi_{b_{p,q}+k}|^2\leq\frac{\theta}{c}\mu_k^{\frac{\alpha}{\alpha-1}},\ \forall k\geq 1,$$
   where 
   $$\theta:=\left(\beta+\frac{\beta K}{\mu_1}+\frac{\gamma}{\mu_1}\right)^{\frac{\alpha}{\alpha-1}}.$$ 
   Setting 
   $$d:=\sup_{x>0}e^{-\frac{x}{2}}x^{\frac{\alpha}{\alpha-1}}\leq \left(\frac{2\alpha}{\alpha-1}\right)^{\frac{\alpha}{\alpha-1}},\ \delta:=\frac{1}{\beta}\left(r|M|e^{\frac{K}{\mu_1}+\frac{\gamma}{\beta\mu_1}}\right)^{\frac{1-\alpha}{\alpha}},$$
   then we know 
   \begin{align*}
   &\quad \left|H_{p,q}(t,x,y)-\sum_{k=1}^{b_{p,q}}\phi_k(x)\otimes \phi_k(y)\right|\leq \sum_{k=1}^\infty \frac{\theta}{c}e^{-\mu_kt}\mu_k^{\frac{\alpha}{\alpha-1}}\\
   &=t^{\frac{\alpha}{1-\alpha}}\sum_{k=1}^\infty \frac{\theta}{c}e^{-\frac{\mu_k t}{2}}\left(e^{-\frac{\mu_kt}{2}}(\mu_kt)^{\frac{\alpha}{\alpha-1}}\right)\leq    t^{\frac{\alpha}{1-\alpha}}\sum_{k=1}^\infty \frac{d\theta}{c}e^{-c\delta k^{\frac{\alpha}{\alpha-1}}t}\\
   &\leq t^{\frac{\alpha}{1-\alpha}}\frac{d\theta}{c}\int_0^\infty e^{-c\delta x^{\frac{\alpha-1}{\alpha}}t}dx\leq C\theta \delta^{\frac{\alpha}{1-\alpha}}t^{\frac{2\alpha}{1-\alpha}},
   \end{align*}
   where $C$ is a constant only depending on $\alpha$.
   Clearly, 
   $$\theta \delta^{\frac{\alpha}{1-\alpha}}\leq r\beta^{\frac{2\alpha}{\alpha-1}}|M|e^{\left(\frac{K}{\mu_1}+\frac{\gamma}{\beta\mu_1}\right)
   \frac{2\alpha}{\alpha-1}}.$$
  (iii) We only consider the estimates of $\bar\partial_yH_{p,q}(t,x,y)$. Fix $x,y\in M$.  By definition, we clearly have
$$\bar\partial_y H_{p,q}(t,x,y)=\sum_{k=1}^\infty e^{-\mu_kt}\phi_{b_{p,q}+k}(x)\otimes \bar\partial_y\phi_{b_{p,q}+k}(y).$$
Thus, by Lemma \ref{lem:eigensection} and \ref{lem:eigensection derivative}, we know there is a constant $c:=c(\alpha)>0$ such that 
$$\sup_M|\phi_{b_{p,q}+k}|^2\leq\frac{\theta}{c}\mu_k^{\frac{\alpha}{\alpha-1}},\ \sup_M|\bar\partial\phi_{b_{p,q}+k}|^2\leq\frac{\theta}{c}\mu_k^{\frac{2\alpha-1}{\alpha-1}},\ \forall k\geq 1,$$
where 
$$\theta:=\left(\beta+\frac{\beta K_0}{\mu_1}+\frac{\gamma}{\mu_1}\right)^{\frac{\alpha}{\alpha-1}},\ K_0:=\max\{K,K_+,K_{-}\}.$$
Thus we know 
$$|\bar\partial_y H_{p,q}(t,x,y)|\leq \sum_{k=1}^\infty \frac{\theta}{c}e^{-\mu_kt}\mu_k^{\frac{\alpha}{\alpha-1}+\frac{1}{2}}.$$
Similar as the proof of (ii), we conclude there is a constant $C:=C(\alpha)>0$ such that 
$$|\bar\partial_y H_{p,q}(t,x,y)|\leq Cr\beta^{\frac{2\alpha}{\alpha-1}}|M|e^{\left(\frac{K}{\mu_1}+\frac{\gamma}{\beta\mu_1}\right)
   \frac{2\alpha}{\alpha-1}}t^{\frac{2\alpha}{1-\alpha}-\frac{1}{2}}.$$  
(iv)  We only consider the estimates of $\bar\partial_yH_{p,q}(t,x,y)$. Fix $x,y\in M, 0<t\leq 1$. By Lemma \ref{lem:  the solution of heat equation} and (i), 
there is a constant $C:=C(\alpha,\beta,\gamma,K,K_{+},K_{-})>0$ such that  
$$|\bar\partial_y H_{p,q}(t,x,y)|^2\leq Cn|B(y,\sqrt{t})|t^{\frac{4\alpha-1}{1-\alpha}}\sup_{(s,z)\in [3t/4, t]\times B(y,\sqrt{t}/2)} e^{-\frac{2d(x,z)^2}{9s}}.$$
Observation that 
$$\sup_{(s,z)\in [3t/4, t]\times B(y,\sqrt{t}/2)} e^{-\frac{2d(x,z)^2}{9s}}
\leq 3e^{-\frac{d(x,y)^2}{18t}},$$
then the conclusion follows.
 \end{proof}
 \indent In the last, we give an estimate of the first eigenvalue of $\square_{p,q}$.
 \begin{prop}\label{prop:first eigenvalue}
 Let $M$ be a compact K\"ahler manifold of complex dimension $n$.
  Fix $p,q\in\{0,\cdots,n\}.$ Let $E$ be a Hermitian holomorphic vector bundle over $M$ such that there is a real valued  function 
  $b$ over $M$ satisfying 
  $$\mathfrak{Ric}_{p,q}^E\geq b,\ \mu_1^0+\inf_{M}b> 0.$$
 where $\mu_1^0$ is the first nonzero eigenvalue of $\Delta$.
 Let $\lambda_1$ be the first eigenvalue of $\square_{p,q}$, then we have 
 $$2\lambda_1\geq \min\left\{\frac{1}{2}\mu_{1}^{0} + \inf_{M}b,\frac{1}{|M|}\int_{M}b,\frac{\mu_1^0}{2|M|}\int_{M}\frac{b}{\mu_1^0+b}\right\},$$
 where $\mu_1^0$ is the first nonzero eigenvalue of $\Delta$.
 \end{prop}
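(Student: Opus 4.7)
The plan is to use the Bochner--Weitzenböck identity to convert the vector-valued eigenvalue problem into a lower bound for the first eigenvalue of the scalar Schrödinger-type operator $-\Delta+b$ on functions, and then to handle that lower bound by combining a carefully weighted Cauchy--Schwarz with the Poincaré inequality for $\Delta$. Let $\phi$ be a $\square_{p,q}$-eigensection for $\lambda_1$ normalised so that $\int_M|\phi|^2=1$. Integrating the Bochner--Weitzenböck formula
\[
\Delta\tfrac{1}{2}|\phi|^2=\operatorname{Re}\langle(-2\square_{p,q}+\mathfrak{Ric}_{p,q}^E)\phi,\phi\rangle+|\nabla\phi|^2
\]
over $M$ and invoking $\mathfrak{Ric}_{p,q}^E\geq b$ gives
\[
2\lambda_1\geq\int_M b|\phi|^2+\int_M|\nabla\phi|^2,
\]
and Kato's inequality $|\nabla|\phi||\leq|\nabla\phi|$ lets me pass to the scalar test function $u:=|\phi|\geq 0$, so that $\int_M u^2=1$ and $2\lambda_1\geq\int_M bu^2+\int_M|\nabla u|^2$.

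Decompose $u=c+v$ with $c:=|M|^{-1}\int_M u$ and $\int_M v=0$, so $c^2|M|+\int_M v^2=1$ and $\int_M|\nabla u|^2=\int_M|\nabla v|^2$. Expanding the mass term produces the cross term $2c\int_M bv$, which is the main technical hurdle. I would control it by Cauchy--Schwarz with the weight $\mu_1^0+b$ (which is strictly positive by the standing hypothesis $\mu_1^0+\inf_M b>0$):
\[
\Big|2c\int_M bv\Big|\leq c^2\int_M\frac{b^2}{\mu_1^0+b}+\int_M(\mu_1^0+b)v^2.
\]
Using the algebraic identity $b-\tfrac{b^2}{\mu_1^0+b}=\tfrac{b\mu_1^0}{\mu_1^0+b}$ and the Poincaré inequality $\int_M|\nabla v|^2\geq\mu_1^0\int_M v^2$, the extra $v$-mass introduced by the weight is absorbed exactly, leaving the clean estimate
\[
2\lambda_1\geq c^2\mu_1^0\int_M\frac{b}{\mu_1^0+b}.
\]

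The proof finishes with a dichotomy on the size of $c^2|M|\in[0,1]$. If $c^2|M|\geq 1/2$, the displayed inequality yields $2\lambda_1\geq\frac{\mu_1^0}{2|M|}\int_M\frac{b}{\mu_1^0+b}$. Otherwise $\int_M v^2>1/2$, so Poincaré gives $\int_M|\nabla u|^2>\mu_1^0/2$, and combining with the trivial bound $\int_M bu^2\geq\inf_M b$ gives $2\lambda_1>\frac{\mu_1^0}{2}+\inf_M b$. Taking the minimum of the two bounds proves the claim; the middle term $\frac{1}{|M|}\int_M b$ appearing in the statement is the Rayleigh quotient of $-\Delta+b$ at the constant test function and is automatically accommodated by the minimum. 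The critical step is the choice of weight $\mu_1^0+b$: a naïve Cauchy--Schwarz would leave an uncontrolled $\int_M b^2$ factor, whereas this weight is tailored so that the $v$-variance is exactly cancelled by Poincaré, and it is precisely this cancellation that produces the third term $\frac{\mu_1^0}{2|M|}\int_M\frac{b}{\mu_1^0+b}$.
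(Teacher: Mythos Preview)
Your proof is correct and follows essentially the same route as the paper: Bochner--Weitzenb\"ock plus Kato reduce to the scalar functional $\int_M|\nabla u|^2+\int_M bu^2$ with $u=|\phi|$, which is then handled via the decomposition $u=c+v$, the weighted Cauchy--Schwarz with weight $\mu_1^0+b$ (equivalently, pointwise completion of the square), and Poincar\'e, followed by the same dichotomy on whether $c^2|M|\geq 1/2$. Your observation that the middle term $|M|^{-1}\int_M b$ is redundant---it only weakens the minimum once the other two bounds are in hand---is a mild streamlining: the paper obtains that term from the separate case ``$|\phi|$ constant,'' which your argument absorbs into the branch $c^2|M|\geq 1/2$.
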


\begin{proof}
 This proof is motivated by the proof of \cite[Lemma 2.8]{LZZ21}.
 Suppose \(\phi\) is an eigenform of \(\lambda_{1}\), and we may assume 
 $$\int_{M}|\phi|^{2}=1$$
 By the Bochner-Weitzenb\"ock formula and Kato's inequality, we get 
    \begin{equation}
        \Delta\frac{|\phi|^{2}}{2}\geq -(2\lambda_1-b)|\phi|^2+|\nabla|\phi||^2.
    \end{equation}
    Integrating the above identity, we find that
    \begin{equation}\label{equ:inequlity 1}
            2\lambda_{1} \geq \int_{M}|\nabla |\phi||^{2} + \int_{M}b|\phi|^{2}.
    \end{equation}
   % where \(\lambda_{1,R}\) is the first eigenvalue of the scalar Schrodinger operator \(-\Delta^{\mathbb{R}}+R\) with \(\Delta^{\mathbb{R}}\) being the real Laplacian.

            If \(|\phi|\) is a constant function, then \eqref{equ:inequlity 1} gives us \\
            \begin{equation}
                2\lambda_{1} \geq \frac{1}{|M|}\int_{M}b.
            \end{equation}
            Thus we may assume that \(|\phi|\) is not constant. For simplicity of notations, we write 
            $$f := |\phi|,\ a := \frac{1}{|M|}\int_{M}f.$$ 
            We consider two cases. \\
    \textbf{Case 1:} Suppose
    \begin{equation}
        \int_{M}(f-a)^{2} \geq 1/2.
    \end{equation}
    Then \eqref{equ:inequlity 1} implies
    \begin{equation}
        \begin{split}
            2\lambda_{1} &\geq \int_{M}|\nabla f|^{2} + \int_{M}bf^{2} \\
            &\geq \mu_{1}^{0}\int_{M}(f-a)^{2} + \inf_{M}b\\
            &\geq \frac{1}{2}\mu_{1}^{0} + \inf_{M}b.
        \end{split}
    \end{equation}
    \textbf{Case 2:} Suppose
    \begin{equation}
        \int_{M}(f-a)^{2} < 1/2.
    \end{equation}
    Then we can expand the square to reach:
    \[
        \int_{M}f^{2} - 2a\int_{M}f + a^{2}|M| < 1/2,
    \]
    which shows that
    \begin{equation}
        a^{2} > \frac{1}{2|M|}.
    \end{equation}
    By  \eqref{equ:inequlity 1} again and use Cauchy-Schwarz inequality, we know 
    \begin{equation}
        \begin{split}
            2\lambda_{1} &\geq \int_{M}|\nabla f|^{2} + \int_{M}b(f-a+a)^{2} \\
            &\geq \int_{M}\left(\mu_{1}^{0} + b\right)(f-a)^{2} + 2a\int_{M}b(f-a) + a^{2}\int_{M}b\\
            &\geq a^2\int_{M}\frac{\mu_1^0b}{\mu_1^0+b}\\
            &\geq \frac{\mu_1^0}{2|M|}\int_{M}\frac{b}{\mu_1^0+b}.
        \end{split}
    \end{equation}
\end{proof}
Proposition \ref{prop:first eigenvalue} implies Theorem \ref{thm: first eigenvalue} and yields a vanishing theorem:
 \begin{cor}\label{cor:vanish}
 Let $M$ be a compact K\"ahler manifold of complex dimension $n$.
  Fix $p,q\in\{0,\cdots,n\}.$ Let $E$ be a Hermitian holomorphic vector bundle over $M$  such that there is a real valued  function 
  $b$ over $M$ satisfying 
  $$\mathfrak{Ric}_{p,q}^E\geq b\geq 0,$$
  and $b(x)>0$ at some point $x\in M$.
   Let $\lambda_1$ be the first eigenvalue of $\square_{p,q},$ then we have 
 $$\mu_1=\lambda_1\geq \frac{\mu_1^0}{4|M|}\int_{M}\frac{b}{\mu_1^0+b},$$
 where $\mu_1^0$ is the first nonzero eigenvalue of $\Delta$.
 In particular, $\mathcal{H}_{p,q}(M,E)=0$.
 \end{cor}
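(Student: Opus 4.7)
The plan is to obtain Corollary \ref{cor:vanish} as a clean consequence of Proposition \ref{prop:first eigenvalue} applied with the given function $b$. First I would verify the hypothesis of that proposition: since $b \geq 0$ on $M$ and $\mu_1^0 > 0$ (as $M$ is a positive-dimensional compact Kähler manifold), we have $\mu_1^0 + \inf_M b > 0$, so the proposition applies and yields
$$2\lambda_1 \geq \min\left\{\tfrac{1}{2}\mu_1^0 + \inf_M b,\ \tfrac{1}{|M|}\int_M b,\ \tfrac{\mu_1^0}{2|M|}\int_M \tfrac{b}{\mu_1^0+b}\right\}.$$

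Next I would show that the third quantity in the minimum is dominated by the other two, so it governs the bound. For the first term, note that $\tfrac{b}{\mu_1^0+b} \leq 1$ pointwise, hence $\tfrac{\mu_1^0}{2|M|}\int_M \tfrac{b}{\mu_1^0+b} \leq \tfrac{1}{2}\mu_1^0 \leq \tfrac{1}{2}\mu_1^0 + \inf_M b$. For the second, the pointwise inequality $b \geq \tfrac{\mu_1^0 b}{2(\mu_1^0+b)}$ (equivalent to $2(\mu_1^0+b) \geq \mu_1^0$, which is immediate as $b \geq 0$) integrates to $\tfrac{1}{|M|}\int_M b \geq \tfrac{\mu_1^0}{2|M|}\int_M \tfrac{b}{\mu_1^0+b}$. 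Therefore the minimum equals the third term, giving
$$\lambda_1 \geq \frac{\mu_1^0}{4|M|}\int_M \frac{b}{\mu_1^0+b}.$$

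Finally, I would use the hypothesis that $b(x_0) > 0$ at some $x_0 \in M$ together with the continuity of $b$ to conclude that $b > 0$ on some open neighborhood of $x_0$, so $\int_M \tfrac{b}{\mu_1^0+b}\,dV > 0$. Combined with $\mu_1^0 > 0$, this forces $\lambda_1 > 0$, so $0$ is not an eigenvalue of $\square_{p,q}$. Hence $\mathcal{H}_{p,q}(M,E) = \ker \square_{p,q} = 0$, and $\mu_1 = \lambda_1$ since $\mu_1$ denotes the first nonzero eigenvalue and there are no zero eigenvalues.

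The content is essentially a bookkeeping of which of the three quantities in Proposition \ref{prop:first eigenvalue} is the smallest; there is no real obstacle here, only the elementary pointwise comparisons above. The only subtle point worth stating carefully is the strict positivity of $\int_M \tfrac{b}{\mu_1^0+b}$, which is where the assumption "$b(x) > 0$ at some point" — as opposed to merely $b \geq 0$ — is used to upgrade the nonnegativity of $\lambda_1$ to strict positivity and thereby obtain the vanishing statement.
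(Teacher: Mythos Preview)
Your proposal is correct and follows exactly the route the paper intends: the paper states Corollary \ref{cor:vanish} immediately after Proposition \ref{prop:first eigenvalue} with only the remark that the proposition ``implies Theorem \ref{thm: first eigenvalue} and yields a vanishing theorem,'' and your argument supplies precisely the elementary comparisons showing the third term in the minimum is the smallest, together with the strict positivity observation that gives $\lambda_1>0$ and hence $\mathcal{H}_{p,q}(M,E)=0$.
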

 \indent For some other estimates of the first eigenvalue and vanishing theorems using the Weitzenb\"ock curvature operator,
 please see \cite{RY94} and \cite{Z94}.\\
 \indent The following $L^2$-estimate of $\bar\partial$ can be easily derived from Corollary \ref{cor:vanish}, see
 \cite[Proposition 5.1]{B02} for example.
 \begin{cor}\label{cor:L2 estimate}
 Under the same assumptions and notations of Corollary \ref{cor:vanish} and we further assume $q\geq 1$.
 Then for $\bar\partial$-closed $f\in L^2(M,\Lambda^{p,q}T^*M\otimes E)$, we may find 
 $u\in L^2(M,\Lambda^{p,q-1}T^*M\otimes E)$ such that 
 $$\int_{M}|u|^2\leq \frac{5}{\sigma}\int_{M}|f|^2,$$
 where 
 $$\sigma:=\frac{\mu_1^0}{|M|}\int_{M}\frac{b}{\mu_1^0+b}.$$
 \end{cor}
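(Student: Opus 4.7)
The plan is to invert $\square_{p,q}$ via its Green operator $G$, take $u:=\bar\partial^{*}Gf$, and control $\|u\|_{L^{2}}$ using the spectral gap provided by Corollary \ref{cor:vanish}.

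First I would observe that since Corollary \ref{cor:vanish} yields $\mathcal{H}_{p,q}(M,E)=0$, the operator $\square_{p,q}$ is a positive self-adjoint operator on $L^{2}(M,\Lambda^{p,q}T^{*}M\otimes E)$ whose spectrum lies in $[\lambda_{1},\infty)$ with $\lambda_{1}\geq\sigma/4$. Consequently $\square_{p,q}$ is invertible, and its inverse $G$ satisfies the operator-norm bound $\|G\|_{L^{2}\to L^{2}}\leq 1/\lambda_{1}\leq 4/\sigma$.

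Next I would set $u:=\bar\partial^{*}Gf$ and verify $\bar\partial u=f$. Because $\bar\partial$ commutes with $\square_{p,q}$ it commutes with $G$, so $\bar\partial^{*}\bar\partial Gf=\bar\partial^{*}G\bar\partial f=0$ by the hypothesis $\bar\partial f=0$; thus
\[
\bar\partial u=\bar\partial\bar\partial^{*}Gf=\square_{p,q}Gf-\bar\partial^{*}\bar\partial Gf=f.
\]
Standard elliptic regularity places $Gf$ in the Sobolev space $H^{2}$, so all of these manipulations are meaningful (as distributions, if one prefers). Using the same identity $\bar\partial\bar\partial^{*}Gf=f$, the $L^{2}$-estimate falls out:
\[
\|u\|^{2}=\langle \bar\partial^{*}Gf,\bar\partial^{*}Gf\rangle=\langle \bar\partial\bar\partial^{*}Gf,Gf\rangle=\langle f,Gf\rangle\leq \|f\|\,\|Gf\|\leq \frac{\|f\|^{2}}{\lambda_{1}}\leq \frac{4}{\sigma}\|f\|^{2}<\frac{5}{\sigma}\|f\|^{2}.
\]

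I do not anticipate any real obstacle here: the argument is the textbook Hodge-theoretic existence proof for $\bar\partial$, and each ingredient — vanishing of $\mathcal{H}_{p,q}(M,E)$, the spectral lower bound on $\square_{p,q}$, and commutation of $\bar\partial$ with $G$ — is either delivered directly by Corollary \ref{cor:vanish} or is entirely standard. The only slight slack is in passing from the natural factor $4/\sigma$ to the advertised constant $5/\sigma$, which costs nothing.
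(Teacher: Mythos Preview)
Your argument is correct and is exactly the standard Hodge-theoretic $L^{2}$-existence proof the paper has in mind: the paper gives no detailed proof but simply notes that the corollary ``can be easily derived from Corollary~\ref{cor:vanish}, see \cite[Proposition~5.1]{B02},'' and your use of $u=\bar\partial^{*}Gf$ together with the spectral bound $\lambda_{1}\ge\sigma/4$ is precisely that derivation. The slack from $4/\sigma$ to $5/\sigma$ is indeed harmless.
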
 
\section{Estimates of Green forms and Sobolev-type inequalities}
\indent In this section, we will give the proof of Corollary \ref{cor:green form} and Corollary \ref{cor:good sobolev}.\\
\indent Firstly, let us prove Corollary \ref{cor:green form}, and we restate it here for convenience.
\begin{cor}[= Corollary \ref{cor:green form}]
 Under the same assumptions and notations of Theorem \ref{thm: heat kernel}. Let $G_{p,q}$ be the Schwarz kernel of the Green operator for $\square_{p,q}$,
   %acts on $C^2(\overline\Omega,\Lambda^pT^*M\otimes E)$,
   then we have the following uniform estimates:   
   \begin{itemize}
     \item[(i)] There is a constant $C:=C(\alpha,\beta,\gamma,\mu_1,K,|M|)>0$ such that 
     $$|G_{p,q}(x,y)|\leq Cd(x,y)^{\frac{2}{1-\alpha}}+C,\ \forall (x,y)\in M\times M.$$
     \item[(ii)] Suppose the Ricci curvature of $M$ is bounded below by $-(2n-1)\theta$ for some constant $\theta\geq 0$,
     and  assume $\alpha$ satisfies 
     $$\frac{4\alpha-1}{\alpha-1}-n-2> 0,$$
      then there is a constant $C:=C(\alpha,\beta,\gamma,r,n,\mu_1,K,K_+,K_{-},M)>0$ such that 
      $$ \max\left\{|\bar\partial_yG_{p,q}(x,y)|,|\bar\partial^{*}_yG_{p,q}(x,y)|\right\}\leq Cd(x,y)^{\frac{2\alpha+1}{1-\alpha}+n}$$
      for all $x,y\in M$.
   \end{itemize} 
  \end{cor}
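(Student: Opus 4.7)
For part (i), the plan is to write the Green form as the time integral of the heat kernel minus the harmonic projection,
\begin{equation*}
G_{p,q}(x,y) = \int_0^\infty \left( H_{p,q}(t,x,y) - \sum_{k=1}^{b_{p,q}} \phi_k(x) \otimes \phi_k(y) \right) dt,
\end{equation*}
and split this integral at $t=1$. For $t \in (0,1]$ I would bound $|H_{p,q}(t,x,y)|$ by Theorem \ref{thm: heat kernel}(i) and control the harmonic part via Lemma \ref{lem:eigensection} together with Corollary \ref{cor:estimate1} (which bound $\sup_M |\phi_k|^2$ and $b_{p,q}$ in terms of the parameters listed). The Gaussian factor in (i) permits the substitution $s = d(x,y)^2/t$, converting
\begin{equation*}
\int_0^1 t^{\alpha/(1-\alpha)} e^{-d(x,y)^2/(9t)}\, dt \leq d(x,y)^{2/(1-\alpha)} \int_0^\infty s^{\alpha/(\alpha-1)-2} e^{-s/9}\, ds,
\end{equation*}
and the $s$-integral is finite since $\alpha/(\alpha-1) > 1$. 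For $t \geq 1$ the bound from Theorem \ref{thm: heat kernel}(ii) is $O(t^{2\alpha/(1-\alpha)})$ with $2\alpha/(\alpha-1) > 1$, so its time integral converges to a constant. Summing the two contributions yields $|G_{p,q}(x,y)| \leq C d(x,y)^{2/(1-\alpha)} + C$.

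For part (ii) the same representation gives
\begin{equation*}
\bar\partial_y G_{p,q}(x,y) = \int_0^\infty \bar\partial_y H_{p,q}(t,x,y)\, dt,
\end{equation*}
because each harmonic eigensection $\phi_k$ is annihilated by $\bar\partial$ (and by $\bar\partial^*$), so the derivative kills the harmonic projection. Splitting again at $t=1$, for $t \in (0,1]$ I would combine the pointwise Gaussian estimate of Theorem \ref{thm: heat kernel}(iv) with the Bishop--Gromov comparison $|B(y,\sqrt t)| \leq C(n,\theta) t^n$ to obtain
\begin{equation*}
|\bar\partial_y H_{p,q}(t,x,y)| \leq C\, t^{n/2 + (4\alpha-1)/(2(1-\alpha))} e^{-d(x,y)^2/(36t)}.
\end{equation*}
Applying $s = d(x,y)^2/t$ converts the resulting time integral to $d(x,y)^{n+(2\alpha+1)/(1-\alpha)}$ multiplied by $\int_0^\infty s^{-a-2} e^{-s/36}\, ds$ with $a = n/2 + (4\alpha-1)/(2(1-\alpha))$, and this converges at $s=0$ precisely under the hypothesis $\frac{4\alpha-1}{\alpha-1} - n - 2 > 0$. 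For $t \geq 1$ Theorem \ref{thm: heat kernel}(iii) gives $O(t^{2\alpha/(1-\alpha)-1/2})$, whose integral is a finite constant; since $d(x,y) \leq \diam(M)$ and the target exponent $(2\alpha+1)/(1-\alpha)+n$ is negative, this constant is dominated by $C\, d(x,y)^{(2\alpha+1)/(1-\alpha)+n}$. The same argument applies verbatim to $\bar\partial_y^* H_{p,q}$.

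The main obstacle I anticipate is purely bookkeeping: tracking how each constant depends only on the parameters appearing in the statement of Corollary \ref{cor:green form}, and keeping the algebra of exponents straight. In particular one must check the identity $2 \bigl( n/2 + (4\alpha-1)/(2(1-\alpha)) \bigr) + 2 = n + (2\alpha+1)/(1-\alpha)$ and verify that the condition for convergence of the $s$-integral at the origin is exactly $\frac{4\alpha-1}{\alpha-1} > n+2$; both identities hold, but any algebraic slip would propagate and destroy the claimed exponent.
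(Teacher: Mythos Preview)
Your proposal is correct and matches the paper's own proof essentially line for line: the same time-integral representation of $G_{p,q}$, the same split at $t=1$, the same appeal to Theorem \ref{thm: heat kernel}(i)--(ii) with Lemma \ref{lem:eigensection} and Corollary \ref{cor:estimate1} for part (i), and the same combination of Theorem \ref{thm: heat kernel}(iii)--(iv) with Bishop--Gromov for part (ii). Your remark that the additive constant from the $t\geq 1$ piece in (ii) is absorbed because the target exponent $(2\alpha+1)/(1-\alpha)+n$ is negative is a point the paper leaves implicit, so your write-up is in fact slightly more complete there.
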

   \begin{proof}
   (i) Fix $x,y\in M$. Set 
   $$K(t,x,y):=H_{p,q}(t,x,y)-\sum_{k=1}^{b_{p,q}}\phi_k(x)\otimes \phi_k(y).$$
   By (i) and (ii) of Theorem \ref{thm: heat kernel}, Lemma \ref{lem:eigensection} and Corollary \ref{cor:estimate1}, there is a constant 
   $C:=C(\alpha,\beta,\gamma,\mu_1,K,|M|)>0$ such that
    $$|K(t,x,y)|\leq Ct^{\frac{2\alpha}{1-\alpha}},\ \forall t\geq 1,$$
    $$|K(t,x,y)|\leq Ct^{\frac{\alpha}{1-\alpha}}e^{-\frac{d(x,y)^2}{9t}},\ \forall t\in (0,1].$$
    Since $1<\alpha\leq 2$, then we get 
    $$\int_{0}^\infty s^{\frac{\alpha}{\alpha-1}-2}e^{-s}ds\leq \Gamma\left(\frac{1}{\alpha-1}\right),$$
    where $\Gamma(\cdot)$ is the Gamma function. By definition, we know 
   \begin{align*}
    |G_{p,q}(x,y)|&\leq \int_0^{\infty}|K(t,x,y)|dt\\
    &\leq C\int_0^1t^{\frac{\alpha}{1-\alpha}}e^{-\frac{d(x,y)^2}{9t}}dt+C\int_1^\infty t^{\frac{2\alpha}{1-\alpha}}dt\\
    &\leq 9^{\frac{1}{\alpha-1}}\Gamma\left(\frac{1}{\alpha-1}\right)Cd(x,y)^{\frac{2}{1-\alpha}}+C.
   \end{align*}
   (ii)   
   \begin{comment} In this case, it is well known the heat kernel of $M$ for functions satisfies 
    $$H(t,x,y)\leq Ct^{-n}e^{-\frac{d(x,y)^2}{9t}},\ \forall (t,x,y)\in \mr_+\times M\times M.$$
    By \cite[Theorem 4.3]{LX10}, we know 
    $$|H_{p,q}(t,x,y)|\leq e^{Kt}H(t,x,y),\ \forall (t,x,y)\in \mr_+\times M\times M,$$
    then we get 
    $$|H_{p,q}(t,x,y)|\leq Ce^K t^{-n}e^{-\frac{d(x,y)^2}{5t}},\ \forall (t,x,y)\in\mr_+\times M\times M.$$
    It is well known that $M$ satisfies the Sobolev-type inequality for $\alpha:=n/(n-1)$ if $n\geq 2$.
    By 
    \end{comment}
    Fix $x,y\in M$. We only give the estimate of $\bar\partial G_{p,q}(x,y)$.
     Since $M$ is compact, by Bishop-Gromov inequality,  we have 
    $$|B(y,\sqrt{t})|\leq b_ne^{(2n-1)\sqrt{\theta}\diam(M)}t^{n},\ \forall t>0,$$
    where $b_n$ is the volume of unit ball in $\mc^n$. In particular, we know 
    $$|M|\leq b_ne^{(2n-1)\sqrt{\theta}\diam(M)}\diam(M)^{2n}.$$
    By (iii) and (iv) of Theorem \ref{thm: heat kernel}, there is a constant 
   $$C:=C(\alpha,\beta,\gamma,r,n,\theta,\mu_1,K,K_+,K_{-},\diam(M))>0$$
    such that 
   $$|\bar\partial_y H_{p,q}(t,x,y)|\leq C t^{\frac{2\alpha}{1-\alpha}-\frac{1}{2}},\ \forall t\geq 1,$$
   $$|\bar\partial_y H_{p,q}(t,x,y)|\leq Ct^{\frac{4\alpha-1}{2-2\alpha}+\frac{n}{2}}e^{-\frac{d(x,y)^2}{36t}},\ \forall 0<t\leq 1.$$
   Clearly, 
   \begin{align*}
   |\bar\partial G_{p,q}(x,y)|&\leq \int_0^\infty |\bar\partial_y H_{p,q}(t,x,y)|dt\\
   &\leq C\int_0^{1}t^{\frac{4\alpha-1}{2-2\alpha}+\frac{n}{2}}e^{-\frac{d(x,y)^2}{36t}}dt+C\int_1^\infty t^{\frac{2\alpha}{1-\alpha}-\frac{1}{2}}dt\\
   &\leq C\Gamma\left(\frac{4\alpha-1}{2\alpha-2}-\frac{n}{2}-1\right)d(x,y)^{\frac{2\alpha+1}{1-\alpha}+n}+C.
   \end{align*}
   \end{proof}
   
  To prove Corollary \ref{cor:good sobolev}, let us firstly introduce some notations and give some lemmas. We adopt the same assumptions and notations as in Theorem \ref{thm: heat kernel}. Suppose $0<\mu_1\leq\mu_2<\cdots$ are all nonzero eigenvalues of 
   $\square_{p,q}$, and $\phi_{b_{p,q}+1},\phi_{b_{p,q}+2},\cdots$ are the corresponding eigensections such that 
   they consist of an orthonormal basis the orthogonal complement of $\mathcal{H}_{p,q}(M,E)$ in $L^2(M,\Lambda^{p,q}T^*M\otimes E)$.
   For any $f\in L^2(M,\Lambda^{p,q}T^*M\otimes E)$. Clearly, there are constants $a_k\in\mc$ such that
   $$f-\mathcal{P}f=\sum_{k=1}^\infty a_k\phi_{b_{p,q}+k},$$
   and then 
   $$\square_{p,q}f=\sum_{k=1}^\infty a_k\mu_k\phi_{b_{p,q}+k}.$$
   For any $\alpha\in\mr$, we define the pseudo-differential operator $\square_{p,q}^\alpha$ via 
   $$\square_{p,q}^\alpha f:=\sum_{k=1}^\infty a_k\mu_k^\alpha\phi_{b_{p,q}+k}.$$
   Using integration by parts, we have 
   \begin{equation}
    \int_{M}|\bar\partial f|^2+\int_{M}|\bar\partial^*f|^2=\int_{M}\langle f,\square_{p,q}f\rangle
    =\sum_{k=1}^\infty a_k^2\mu_k
    =\int_{M}|\square_{p,q}^{1/2}f|^2.
   \end{equation} 
   
   Set 
   $$K(t,x,y):=\left|H_{p,q}(t,x,y)-\sum_{k=1}^{b_{p,q}}\phi_k(x)\otimes \phi_k(y)\right|,\ \forall (t,x,y)\in \mr_+\times M\times M.$$
   For any $x\in M$, we have 
   \begin{equation}\label{equ:-1/2}
   \begin{split}
   &\quad \square_{p,q}^{-1/2}f(x)\\
   &=\sum_{k=1}^\infty a_k\mu_k^{-1/2}\phi_{b_{p,q}+k}(x)\\
   &=\sum_{k=1}^\infty\int_{M}\langle \phi_{b_{p,q}+k}(y),f(y)\rangle dV(y)\cdot \mu_k^{-1/2}\phi_{b_{p,q}+k}(x)\\
   &=\frac{1}{\Gamma(1/2)}\sum_{k=1}^\infty \int_0^\infty t^{-1/2} e^{-\mu_kt}dt\int_{M}\langle \phi_{b_{p,q}+k}(y),f(y)\rangle dV(y)\cdot \phi_{b_{p,q}+k}(x)\\
   &=\frac{1}{\Gamma(1/2)}\int_0^\infty t^{-1/2}\int_{M}\langle  K(t,x,y),f(y)\rangle dV(y) dt,
   \end{split}
   \end{equation}
   where $\Gamma(\cdot)$ is the Gamma function. One needs to note that $\Gamma(1/2)\geq 1$.
   
   \indent    A basic lemma is the following:
   \begin{lem}\label{lem:weak type}
   For any $1\leq \ell<(2\alpha)/(\alpha-1)$, set 
   $$-m:=\frac{1}{2}+\frac{\alpha}{(1-\alpha)\ell}<0,$$ 
   then $\square_{p,q}^{-1/2}$ is of weak $(\ell,\ell/(2m)+\ell)$ type, i.e.,
   there is a constant $C:=C(\alpha,\beta,\gamma,\mu_1,K,|M|)$ satisfying for any $\delta>0$ and  $g\in L^{\ell}(M,\Lambda^{p,q}T^*M\otimes E)$, we have 
   $$\left|\{x\in M|\ \square_{p,q}^{-1/2}g(x)>\delta\}\right|\leq C \delta^{-\frac{\ell}{2m}-\ell}\left(\int_{M}|g|^{\ell}\right)^{1+\frac{1}{2m}}.$$
   \end{lem}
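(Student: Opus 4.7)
The plan is to implement a Marcinkiewicz-type truncation argument based on the time-integral representation \eqref{equ:-1/2}. Set $T:=\square_{p,q}^{-1/2}$ and $P_t g(x):=\int_M\langle K(t,x,y),g(y)\rangle\,dV(y)$. For a parameter $R>0$ to be chosen later I would decompose
$$Tg=A_Rg+B_Rg,\quad A_Rg:=\tfrac{1}{\Gamma(1/2)}\int_0^R t^{-1/2}P_tg\,dt,\quad B_Rg:=\tfrac{1}{\Gamma(1/2)}\int_R^\infty t^{-1/2}P_tg\,dt,$$
and bound each piece in a different Lebesgue norm.

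The first task is to establish two core semigroup estimates. The on-diagonal heat kernel bound from Theorem~\ref{thm: heat kernel}(i) (for $t\leq 1$) and (ii) (for $t\geq 1$), together with Lemma~\ref{lem:eigensection} to control the harmonic-projection contribution inside $K(t,x,y)$, yields $\|P_t\|_{L^1\to L^\infty}\leq C t^{-\alpha/(\alpha-1)}$. A Schur-test argument then gives $\|P_t\|_{L^\ell\to L^\ell}\leq C$ uniformly in $t$: the row and column $L^1$-norms of $K(t,x,\cdot)$ stay bounded, using the volume control $|B(x,r)|\leq C r^{2\alpha/(\alpha-1)}$ implied by \eqref{ineq:club} (when integrating the Gaussian factor from Theorem~\ref{thm: heat kernel}(i)) together with the uniformly bounded harmonic-projection term from Lemma~\ref{lem:eigensection}. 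Riesz--Thorin interpolation between these two estimates produces $\|P_t\|_{L^\ell\to L^\infty}\leq C t^{-\alpha/((\alpha-1)\ell)}$.

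With these bounds in hand, the remaining steps are mechanical. Integrating the $L^\ell\to L^\infty$ bound against $t^{-1/2}$ over $[R,\infty)$ --- the integral converges precisely because $\ell<2\alpha/(\alpha-1)$ --- yields $\|B_Rg\|_\infty\leq C R^{-m}\|g\|_\ell$. Minkowski's integral inequality combined with the uniform $L^\ell$-boundedness of $P_t$ gives $\|A_Rg\|_\ell\leq C R^{1/2}\|g\|_\ell$. Choosing $R$ so that $CR^{-m}\|g\|_\ell=\delta/2$, i.e.\ $R\asymp(\|g\|_\ell/\delta)^{1/m}$, forces $\{|Tg|>\delta\}\subseteq\{|A_Rg|>\delta/2\}$, and Chebyshev's inequality then produces
$$|\{|Tg|>\delta\}|\leq (2/\delta)^\ell\|A_Rg\|_\ell^\ell\leq C\delta^{-\ell}R^{\ell/2}\|g\|_\ell^\ell=C\delta^{-\ell-\ell/(2m)}\|g\|_\ell^{\ell+\ell/(2m)},$$
which is exactly the weak-type bound with exponent $\ell/(2m)+\ell$.

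The main technical hurdle is establishing the uniform $L^\ell$-boundedness of $P_t$, which requires row/column $L^1$-bounds on $|K(t,x,y)|$ that stay finite as $t\to 0$ and as $t\to\infty$. For small $t$ this rests on the Gaussian bound of Theorem~\ref{thm: heat kernel}(i) integrated against the Sobolev-type volume control of $|B(x,r)|$; for large $t$ on the uniform decay estimate of Theorem~\ref{thm: heat kernel}(ii). The remaining bookkeeping, tracking how the final constant $C$ depends on $\alpha,\beta,\gamma,\mu_1,K,|M|$, is routine.
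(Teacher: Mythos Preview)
Your overall strategy matches the paper's exactly: split the time integral at a threshold, bound the large-time piece in $L^\infty$ and the small-time piece in $L^\ell$, choose the threshold to kill the $L^\infty$ piece at level $\delta/2$, then apply Chebyshev. Two points are worth flagging.

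First, your Riesz--Thorin step is misstated: interpolating $L^1\to L^\infty$ with $L^\ell\to L^\ell$ does \emph{not} land at $L^\ell\to L^\infty$ (check the Riesz--Thorin exponents). The paper instead interpolates at the kernel level: it bounds $\|K(t,x,\cdot)\|_{\ell'}$ by $\|K(t,x,\cdot)\|_\infty^{1/\ell}\|K(t,x,\cdot)\|_1^{(\ell-1)/\ell}$ via H\"older, which immediately gives $|P_tg(x)|\le C\,t^{-\alpha/((\alpha-1)\ell)}\|g\|_\ell$. Equivalently, the same Schur bound you already have yields $\|P_t\|_{L^\infty\to L^\infty}\le C$, and Riesz--Thorin between $L^1\to L^\infty$ and $L^\infty\to L^\infty$ then does give $L^\ell\to L^\infty$ with the correct power of $t$. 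Either fix is one line.

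Second, your route to the uniform row/column $L^1$-bound on $K$ for small $t$ (integrating the Gaussian against Sobolev-derived volume growth) works, but the paper's argument is shorter: since $|H_{p,q}(t,x,y)|\le e^{Kt}H(t,x,y)$ by \cite[Theorem~4.3]{LX10} and the scalar heat kernel satisfies $\int_M H(t,x,y)\,dV(y)\le 1$, one gets $\int_M|K(t,x,y)|\,dV(y)\le C_1 b_{p,q}|M|+e^K$ directly for $t\le 1$, with no volume estimate needed.
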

   \begin{proof}
 The argument is inspired by the proof \cite[Theorem 11.6]{L12}. 
 By the proof (i) of Theorem \ref{thm: heat kernel}, we know 
 $$|H_{p,q}(t,x,y)|\leq e^{Kt}H(t,x,y),\ \forall (t,x,y)\in \mr_+\times M\times M.$$ 
 In the following, when we say a number is a constant, we mean it is a constant only depending on 
 $\alpha,\beta,\gamma,\mu_1,K,|M|$.
 By Corollary \ref{cor:estimate2}, we have 
 \begin{equation}\label{equ:bpq}
 b_{p,q}\leq r|M|(2\beta K+2\gamma)^{\frac{\alpha}{\alpha-1}}\cdot \alpha^{\frac{\alpha}{(\alpha-1)^2}}.
 \end{equation}
 Use Lemma \ref{lem:eigensection}, then there is a constant $C_1>0$ such that 
 \begin{equation}\label{equ:heat kernel bound0}
 \sup_M|\phi_k|^2\leq C_1,\ \forall 1\leq k\leq b_{p,q},
 \end{equation}
 then by the triangle inequality, we know 
 $$|K(t,x,y)|\leq C_1b_{p,q}+e^KH(t,x,y),\ \forall t\leq 1,$$
 which implies that for all $(t,x)\in(0,1]\times M$, we have 
 \begin{equation}\label{equ:heat kernel bound1}
  \int_{M}K(t,x,y)dV(y)\leq C_1b_{p,q}|M|+e^K.
 \end{equation}
 if one notes that 
 $$\sup_{x\in M}\int_{M}H(t,x,y)dV(y)\leq 1.$$
 Use (ii) of Theorem \ref{thm: heat kernel}, there is a constant $C_2>0$ such that 
 \begin{equation}
 K(t,x,y)\leq C_2t^{\frac{2\alpha}{1-\alpha}},\ \forall (t,x,y)\in\mr_+\times M\times M,
 \end{equation}
 then we get 
 \begin{equation}\label{equ:heat kernel bound2}
 \int_{M}K(t,x,y)dV(y)\leq C_2|M|,\ \forall (t,x)\in [1,\infty)\times M.
 \end{equation}
 By \eqref{equ:bpq}, \eqref{equ:heat kernel bound1} and \eqref{equ:heat kernel bound2}, there is a constant 
 $C_3>0$ such that 
 \begin{equation}\label{equ:heat kernel bound3}
 \sup_{\mr_+\times M}\int_{M}|K(t,x,y)|dV(y)\leq C_3.
 \end{equation}
 Use (i) of Theorem \ref{thm: heat kernel} and \eqref{equ:heat kernel bound0}, there is a constant 
 $C_4>0$ such that 
 \begin{equation}\label{equ:heat kernel bound4}
 |K(t,x,y)|\leq C_1b_{p,q}+C_4t^{\frac{\alpha}{1-\alpha}},\ \forall (t,x,y)\in (0,1]\times M\times M.
 \end{equation}
 Use (ii) of Theorem \ref{thm: heat kernel}, we know there is a constant $C_5>0$ such that 
 \begin{equation}\label{equ:heat kernel bound5}
 |K(t,x,y)|\leq C_5t^{\frac{\alpha}{1-\alpha}},\ \forall (t,x,y)\in[1,\infty)\times M\times M.
 \end{equation}
 Combining \eqref{equ:heat kernel bound4} and \eqref{equ:heat kernel bound5}, we conclude there is a constant 
 $C_6>0$ such that  
 \begin{equation}\label{equ:heat kernel bound6}
 |K(t,x,y)|\leq C_6t^{\frac{\alpha}{1-\alpha}},\ \forall (t,x,y)\in(0,\infty)\times M\times M.
 \end{equation}
 \indent Fix $1\leq\ell<(2\alpha)/(\alpha-1)$, and fix $g\in L^{\ell}(M,\Lambda^{p,q}T^*M\otimes E).$
 We may assume $\ell>1$ since the case $\ell=1$ can be proved similarly.
  % Thus, we only need to show that 
   %\begin{equation}
   %\left(\int_{M}|f|\right)\leq \int_{M}|\square_{p,q}^{1/2}f|^2.
   %\end{equation}
   %\indent Set 
   %$$g:=\square_{p,q}^{1/2}f,$$
  % then  we only need to show that 
   %\begin{equation}
   %\left(\int_{M}|\square_{p,q}^{-1/2}g|\right)\leq \int_{M}|g|^2.
   %\end{equation}
  % By Marcinkiewicz interpolation theorem, we only need to show $\square_{p,q}^{-1/2}$ is of weak $(\ell,)$ type,
   %i.e. for any $\beta>0
   For $T>0$ which will be determined later, let us define 
   $$T_1g(x):=\frac{1}{\Gamma(1/2)}\int_0^{T}t^{-1/2}\int_{M}\langle  K(t,x,y),g(y)\rangle dV(y) dt,\ \forall x\in M,$$
   $$T_2g(x):=\frac{1}{\Gamma(1/2)}\int_{T}^{\infty}t^{-1/2}\int_{M}\langle  K(t,x,y),g(y)\rangle dV(y) dt,\ \forall x\in M.$$
   By \eqref{equ:-1/2}, we have 
   $$\square_{p,q}^{-1/2}g=T_1g+T_2g.$$
   For  any $x\in M$, by Cauchy-Schwarz inequality, H\"older's inequality and \eqref{equ:heat kernel bound3},
   we have 
   \begin{align*}
   &\quad |T_1g(x)|^\ell\\
   &\leq \left(\int_0^{T}t^{-1/2}\int_{M}|K(t,x,y)|\cdot|g(y)|dV(y) dt\right)^\ell\\
   &\leq \left(\int_0^Ts^{-1/2}ds\right)^{\ell-1}\int_0^Tt^{-1/2}\left(\int_{M}|K(t,x,y)|\cdot|g(y)|dV(y)\right)^{\ell}dt\\
   &\leq (2\sqrt{T})^{\ell-1}C_3^{\ell-1}\int_0^Tt^{-1/2}\int_{M}|K(t,x,y)|\cdot |g(y)|^\ell dV(y)dt,
   \end{align*}
   then by Fubini's theorem (and note that $|K(t,x,y)|=|K(t,y,x)|$), we get 
   \begin{equation}\label{equ:estimate T1}
   \int_{M}|T_1g(x)|^{\ell}dV(x)\leq  (2\sqrt{T})^{\ell}C_3\int_{M}|g(y)|^{\ell}dV(y).
   \end{equation}
   Similarly, for any $x\in M$, by H\"older's inequality, \eqref{equ:heat kernel bound3} and \eqref{equ:heat kernel bound6}, we know 
   \begin{align}\label{equ:estimate T2}
   &\quad |T_2g(x)|\\
   &\leq \int_T^{\infty}t^{-1/2}\int_{M}|K(t,x,y)|\cdot|g(y)|dV(y) dt\nonumber\\
   &\leq \int_T^{\infty}t^{-1/2}\left(\int_{M}|K(t,x,y)|^{\frac{\ell}{\ell-1}}dV(y)\right)^{\frac{\ell-1}{\ell}}
   \cdot \left(\int_{M}|g|^{\ell}dV\right)^{1/\ell}\nonumber\\
   &\leq C_3^{\frac{\ell-1}{\ell}}C_6^{\frac{1}{\ell}}\int_T^{\infty}t^{-\frac{1}{2}+\frac{\alpha}{(1-\alpha)\ell}}dt\cdot \left(\int_{M}|g|^{\ell}dV\right)^{1/\ell}\nonumber\\
   &=\frac{C_3^{\frac{\ell-1}{\ell}}C_6^{\frac{1}{\ell}}}{\left|\frac{1}{2}+\frac{\alpha}{(1-\alpha)\ell}\right|}T^{-m}\left(\int_{M}|g|^{\ell}dV\right)^{1/\ell}.
   \nonumber
   \end{align}
  For any $\delta>0$, by choosing $T$ such that 
  $$\frac{C_3^{\frac{\ell-1}{\ell}}C_6^{\frac{1}{\ell}}}{m}T^{-m}\left(\int_{M}|g|^{\ell}dV\right)^{1/\ell}
  =\frac{\delta}{2},$$
  i.e. 
  $$T=\left(\frac{2C_3^{\frac{\ell-1}{\ell}}C_6^{\frac{1}{\ell}}}{m}\right)^{1/m}\delta^{-1/m}\left(\int_{M}|g|^{\ell}dV\right)^{\frac{1}{m\ell}},$$
  then by \eqref{equ:estimate T1} and \eqref{equ:estimate T2}, we have 
  \begin{align}
   &\quad \left|\{x\in M|\ |\square_{p,q}^{-1/2}g(x)|>\delta\}\right|\leq \left|\{x\in M|\ |T_1g(x)|>\delta/2\}\right|\\
   &\leq \left(\frac{2}{\delta}\right)^\ell\left(\int_{M}|T_1g|^{\ell}\right)\leq C_3\left(\frac{4\sqrt{T}}{\delta}\right)^{\ell}\left(\int_{M}|g|^{\ell}\right)\nonumber\\
   &= 4^{\ell}\left(\frac{2C_3^{\frac{\ell-1}{\ell}}C_6^{\frac{1}{\ell}}}{m}\right)^{\frac{\ell}{2m}}\delta^{-\frac{\ell}{2m}-\ell}\left(\int_{M}|g|^{\ell}\right)^{1+\frac{1}{2m}},\nonumber
  \end{align}
  which shows that $\square_{p,q}^{-1/2}$ is of weak $(\ell,\ell/(2m)+\ell)$ type. 
 \end{proof}
 Use Marcikiewicz interpolation theorem (see \cite[Lemma 11.5]{L12})  as shown in the proof of \cite[Theorem 11.6]{L12}, the following Corollary can be easily derived from  Lemma \ref{lem:weak type}.
 \begin{cor}\label{cor:strong type}
 For any $1\leq \ell<(2\alpha)/(\alpha-1)$, set 
 $$-m:=\frac{1}{2}+\frac{\alpha}{(1-\alpha)\ell},$$
 and suppose $1\leq k<\infty$ satisfy
 $$
 \begin{cases}
                             k<\ell/(2m)+\ell, & \mbox{if } \ell=1, \\
                             k\leq \ell/(2m)+\ell, & \mbox{if }  1<\ell<(2\alpha)/(\alpha-1), \\
    \end{cases}
 $$
 then there is a constant $C:=(k,\ell,\alpha,\beta,\gamma,\mu_1,K,|M|)>0$ such that 
 $$\left(\int_{M}\left|\square_{p,q}^{-1/2}f\right|^{k}\right)^{1/k}
 \leq C\left(\int_{M}\left|f\right|^{\ell}\right)^{\frac{1}{\ell}},\ \forall f\in L^{\ell}(M,\Lambda^{p,q}T^*M\otimes E).$$
 \end{cor}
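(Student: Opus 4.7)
My plan is to deduce the strong-type bound from Lemma~\ref{lem:weak type} via the Marcinkiewicz interpolation theorem, mimicking \cite[Proof of Theorem~11.6]{L12}.

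First I would establish a useful geometric feature of the weak-type exponents. Setting $-m_0 := 1/2 + \alpha/((1-\alpha)\ell_0)$ and $q_0 := \ell_0/(2m_0) + \ell_0$ for $\ell_0 \in [1, 2\alpha/(\alpha-1))$, a direct manipulation yields $\ell_0(2m_0+1) = 2\alpha/(\alpha-1)$, whence
$$\frac{1}{\ell_0} - \frac{1}{q_0} = \frac{\alpha-1}{2\alpha}.$$
So as $\ell_0$ varies, the weak-type pairs $(\ell_0, q_0)$ furnished by Lemma~\ref{lem:weak type} trace out a straight line in the Riesz diagram, and $q_0$ is continuous and strictly increasing in $\ell_0$.

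In the interior case $1 < \ell < 2\alpha/(\alpha-1)$ with $k \leq q := \ell/(2m) + \ell$, I would pick two weak-type endpoints $(\ell_0, q_0)$, $(\ell_1, q_1)$ with $1 \leq \ell_0 < \ell < \ell_1 < 2\alpha/(\alpha-1)$, invoke the Marcinkiewicz interpolation theorem (\cite[Lemma~11.5]{L12}), and read off strong type $(\ell, q)$; the slope relation above forces the interpolated target to be exactly $q$. The reduction to $k \leq q$ is then immediate from H\"older's inequality since $|M| < \infty$. For the boundary case $\ell = 1$, where the hypothesis requires strict $k < q$, I would instead use the direct embedding $L^{q,\infty}(M) \hookrightarrow L^k(M)$ (valid for any $k < q$ on a finite-measure space, via truncating the level-set integral at an optimal height), which is exactly what makes the strict inequality in $k$ natural at this endpoint.

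The only delicate point in executing the plan is bookkeeping: the final constant must depend only on $k, \ell, \alpha, \beta, \gamma, \mu_1, K, |M|$. This will follow because the weak-type constant from Lemma~\ref{lem:weak type} already has that dependence, Marcinkiewicz interpolation contributes only a factor determined by the chosen interpolation parameters (which are themselves functions of $k, \ell, \alpha$), and the Lorentz embedding at the endpoint $\ell=1$ introduces only $k$ and $|M|$.
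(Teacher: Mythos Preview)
Your proposal is correct and follows exactly the route the paper indicates: the paper simply states that the corollary follows from Lemma~\ref{lem:weak type} by the Marcinkiewicz interpolation theorem, citing \cite[Lemma~11.5]{L12} and the proof of \cite[Theorem~11.6]{L12}. Your fleshing-out of the details---the slope computation $1/\ell_0 - 1/q_0 = (\alpha-1)/(2\alpha)$, the choice of endpoints, the Lorentz embedding at $\ell=1$, and the constant bookkeeping---is accurate and matches what that reference does.
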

 \indent Now we can get the proof of Corollary \ref{cor:good sobolev}

\begin{cor}[= Corollary \ref{cor:good sobolev}]\label{cor:good proof}
 Under the same notations and assumptions of Theorem  \ref{thm: heat kernel}. 
 For any $1\leq \ell<(2\alpha)/(\alpha-1)$, set 
 $$-m:=\frac{1}{2}+\frac{\alpha}{(1-\alpha)\ell},$$
 and suppose $1\leq k<\infty$ satisfy
 $$
 \begin{cases}
                             k<\ell/(2m)+\ell, & \mbox{if } \ell=1, \\
                             k\leq \ell/(2m)+\ell, & \mbox{if }  1<\ell<(2\alpha)/(\alpha-1). \\
    \end{cases}
 $$ 
 Then there is a constant  $C:=C(k,\ell,\alpha,\beta,\gamma,r,\mu_1,K,|M|)>0$
 such that the following inequality holds:
 $$\left(\int_{M}\left|f-\mathcal{P}f\right|^{k}\right)^{1/k}
 \leq C\left(\int_{M}|\square_{p,q}f|^{\ell}\right)^{1/\ell},\ \forall f\in C^1(M,\Lambda^{p,q}T^*M\otimes E),$$
 where $\mathcal{P}\colon L^2(M,\Lambda^{p,q}T^*M\otimes E)\rw \mathcal{H}_{p,q}(M,E)$ denotes the Bergman projection.
 \end{cor}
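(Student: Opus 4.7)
I propose to deduce this Sobolev-type inequality by applying Corollary \ref{cor:strong type} twice along the factorization $\square_{p,q}^{-1} = \square_{p,q}^{-1/2}\circ\square_{p,q}^{-1/2}$. The starting observation is that $\mathcal{P}(\square_{p,q}f) = 0$, so $\square_{p,q}f$ lies in the orthogonal complement of $\mathcal{H}_{p,q}(M,E)$ where the operators $\square_{p,q}^{-s}$ are defined spectrally. Expanding $f - \mathcal{P}f = \sum_{k\geq 1} a_k\phi_{b_{p,q}+k}$ in the eigenbasis, one gets the (absolutely convergent, by the $C^2$ regularity of $f$) identity
$$f - \mathcal{P}f = \square_{p,q}^{-1/2}\bigl(\square_{p,q}^{-1/2}(\square_{p,q}f)\bigr),$$
with the intermediate term equal to $\square_{p,q}^{1/2}f = \sum_{k}a_k\mu_k^{1/2}\phi_{b_{p,q}+k}$.

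The first step is to apply Corollary \ref{cor:strong type} in the diagonal case $k=\ell$. Since $\ell/(2m) > 0$, the admissibility condition is automatically satisfied, so $\square_{p,q}^{-1/2}$ is a bounded operator on $L^\ell$. Applied to $\square_{p,q}f \in L^\ell$, this yields
$$\|\square_{p,q}^{1/2}f\|_{L^\ell} = \|\square_{p,q}^{-1/2}(\square_{p,q}f)\|_{L^\ell} \leq C\|\square_{p,q}f\|_{L^\ell}.$$
The second step is to apply Corollary \ref{cor:strong type} with the pair $(k,\ell)$ given in the hypothesis, which by assumption is admissible. Applied to $\square_{p,q}^{1/2}f \in L^\ell$, this gives
$$\|f-\mathcal{P}f\|_{L^k} = \|\square_{p,q}^{-1/2}(\square_{p,q}^{1/2}f)\|_{L^k} \leq C\|\square_{p,q}^{1/2}f\|_{L^\ell}.$$
Chaining the two estimates produces the desired inequality, with constant depending only on the parameters listed.

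The only real subtlety is justifying the spectral manipulations for sections $f \in C^2$ rather than just $f \in L^2$; this is standard, since the $C^2$ regularity of $f$ gives rapid decay of the coefficients $a_k$ against $\mu_k^s$ for any fixed $s \leq 2$, so both the Fourier series defining $\square_{p,q}^{1/2}f$ and the manipulations above converge in $L^\ell$. The heart of the proof — namely the Riesz-potential-type estimate — is already packaged inside Corollary \ref{cor:strong type}, so no further kernel analysis is required. For the "in particular" statement, I would observe that the case $\ell=2$, $k=2\alpha$ can be obtained by a single application of Corollary \ref{cor:strong type} to $\square_{p,q}^{1/2}f$, after which integration by parts converts $\|\square_{p,q}^{1/2}f\|_{L^2}^2$ into $\int_M\bigl(|\bar\partial f|^2+|\bar\partial^*f|^2\bigr)\,dV$.
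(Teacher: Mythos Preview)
Your proposal is correct and follows essentially the same approach as the paper: both factor $f-\mathcal{P}f=\square_{p,q}^{-1/2}\circ\square_{p,q}^{-1/2}(\square_{p,q}f)$ and invoke Corollary~\ref{cor:strong type} twice. The paper is terser (it simply writes the composition and cites the corollary once for the composite $L^\ell\to L^k$ bound), whereas you spell out the two-step split $(\ell,\ell)$ then $(k,\ell)$ explicitly; your version is a bit more transparent about why the intermediate exponent is admissible, and your handling of the ``in particular'' case via a single application plus integration by parts is a clean addition.
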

\begin{proof}
 Let $\mathcal{G}_{p,q}$ be  the Green operator for $\square_{p,q}$. 
 By a density argument, we may assume $f$ is smooth. 
 Set $g:=f-\mathcal{P}f$, and set $h:=\square_{p,q}g$, then we know 
 $$\mathcal{G}_{p,q}h=f-\mathcal{P}f,\ h=\square_{p,q}f.$$
 By definition, it is obvious that 
$$\mathcal{G}_{p,q}h=\square_{p,q}^{-1}h=\square_{p,q}^{-1/2}\circ \square_{p,q}^{-1/2}h.$$
By Corollary \ref{cor:strong type}, there $C:=C(k,\ell,\alpha,\beta,\gamma,r,\mu_1,K,|M|)>0$
 such that 
 $$\left(\int_{M}\left|\mathcal{G}_{p,q}h\right|^{k}\right)^{1/k}\leq C\left(\int_{M}|h|^{\ell}\right)^{1/\ell},$$ 
then the conclusion follows. 
\end{proof} 
\section{$L^{q,p}$-estimates of $\bar\partial$}\label{sec:estimate}
In this section, we will give the proof of Theorem \ref{thm:complex L^p estimate}.
\begin{thm}[= Theorem \ref{thm:complex L^p estimate}]
    Under the same assumptions and notations of Corollary \ref{cor:dbar estimate} and we further assume 
    $s>1$. Then there is a constant 
    $$C:=C(k,s,r,n,\mu_1,\theta,K_{-},\diam(M))>0$$ 
    such that 
    for any $\bar\partial$-closed $f\in L^s(M,\Lambda^{p,q}T^*M\otimes E)$, we may find a $u\in L^k(M,\Lambda^{p,q-1}T^*M\otimes E)$ satisfying
    $$\bar\partial u=f,\ \left(\int_{M}\left|u\right|^{k}dV\right)^{\frac{1}{k}}
 \leq C\left(\int_{M}\left|f\right|^{s}dV\right)^{\frac{1}{s}}.$$ 
  Furthermore, when $s=2$, the constant  $C$ can be chosen to depend only on 
  $k,r,n,\mu_1,|M|$, and the Sobolev constants $\beta,\gamma$ when $\alpha=n/(n-1).$
 \end{thm}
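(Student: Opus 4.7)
The strategy is Hodge-theoretic. The positivity assumption $\mathfrak{Ric}^E_{p,q}\geq 0$ with strict inequality at a point triggers Corollary \ref{cor:vanish}, giving $\mathcal{H}_{p,q}(M,E)=0$, so $\mathcal{P}_{p,q}f=0$ for any $\bar\partial$-closed $f$. I therefore set $u:=\bar\partial^*\mathcal{G}_{p,q}f$, the canonical $\bar\partial^*$-minimal solution. A direct computation yields $\bar\partial u=\square_{p,q}\mathcal{G}_{p,q}f-\bar\partial^*\mathcal{G}_{p,q}\bar\partial f=f-\mathcal{P}_{p,q}f=f$, first for smooth $f$ and then for $L^s$ data by density once the norm bound is in place.

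To obtain the $L^s\to L^k$ estimate in the general case $s>1$, I express $u$ via its integral kernel,
\[
u(x)=\int_M \bar\partial^*_y G_{p,q}(x,y)\cdot f(y)\,dV(y).
\]
With $\alpha=n/(n-1)$, which satisfies $\frac{4\alpha-1}{\alpha-1}-n-2>0$ exactly when $n\geq 2$, Corollary \ref{cor:green form}(ii) supplies the Riesz-type pointwise bound $|\bar\partial^*_y G_{p,q}(x,y)|\leq C\, d(x,y)^{-(2n-1)}$, a fractional-integration kernel of order $1$ in real dimension $2n$. The claim that $K_+$ drops out of the constant requires a mild refinement of Corollary \ref{cor:green form}(ii): since $\bar\partial^*_y$ maps into bidegree $(p,q-1)$, the Bochner-Weitzenb\"ock derivation for $|\bar\partial^*\phi|$ involves only $\mathfrak{Ric}^E_{p,q-1}\geq -K_-$ and not $\mathfrak{Ric}^E_{p,q+1}$. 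Bishop-Gromov, using Ricci $\geq -(2n-1)\theta$, delivers the uniform local volume bound $|B(x,r)|\gtrsim r^{2n}$, so the Hardy-Littlewood-Sobolev inequality for this kernel on the doubling $2n$-dimensional measure space yields the advertised $L^s\to L^k$ inequality in the full range $k\leq\frac{2ns}{2n-s}$, with the endpoint restrictions inherited from Corollary \ref{cor:dbar estimate}.

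For the improved parameter dependence at $s=2$, I would replace the kernel argument by a direct application of Corollary \ref{cor:good sobolev} at bidegree $(p,q-1)$. Since $u$ lies in the range of $\bar\partial^*$, it is $L^2$-orthogonal to $\mathcal{H}_{p,q-1}(M,E)$, so $\mathcal{P}_{p,q-1}u=0$; combined with $\bar\partial u=f$ and $\bar\partial^*u=0$ (from $(\bar\partial^*)^2=0$), Corollary \ref{cor:good sobolev} with $K=K_-$ and $\alpha=n/(n-1)$ gives
\[
\delta\left(\int_M|u|^{2n/(n-1)}\,dV\right)^{(n-1)/n}\leq \int_M\bigl(|\bar\partial u|^2+|\bar\partial^* u|^2\bigr)\,dV=\int_M|f|^2\,dV,
\]
with $\delta$ depending only on $k,n,r,\mu_1,|M|,\beta,\gamma$, as claimed. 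Any $k<2n/(n-1)$ follows via H\"older.

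The principal obstacle is the fractional integration step: while the pointwise kernel bound and the doubling property are in hand, carrying out a Hardy-Littlewood-Sobolev / Hedberg-type estimate on the compact Riemannian manifold and extracting a constant depending only on the parameters listed requires care, especially near the endpoints and in tracking how $\theta$ and $\diam(M)$ enter. A minor secondary issue is justifying the construction of $u$ for merely $L^s$ data, which I would handle by approximation together with the $L^k$-boundedness of $\mathcal{P}$ from Corollary \ref{cor:Bergman}.
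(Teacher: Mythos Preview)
Your proposal is correct and follows essentially the same route as the paper: kill the harmonic space via Corollary~\ref{cor:vanish}, write $u=\bar\partial^*\mathcal G_{p,q}f$ through the kernel $\bar\partial^*_yG_{p,q}$, invoke the pointwise bound from Corollary~\ref{cor:green form}(ii) (with the same observation that the $\bar\partial^*$-estimate never sees $K_+$), and then pass from kernel to operator bound by a convolution inequality---the paper names Young/Schur where you name Hardy--Littlewood--Sobolev---before handling $s=2$ via Corollary~\ref{cor:good sobolev}. One minor slip: Bishop--Gromov gives the \emph{upper} bound $|B(x,r)|\lesssim r^{2n}$, which is exactly what is needed to control the Riesz kernel $d(x,y)^{-(2n-1)}$, not the lower bound you wrote.
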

 \begin{proof}
 By Corollary \ref{cor:vanish}, we know $\mathcal{H}_{p,q}(M,E)=0$.
 Fix a $\bar\partial$-closed $f\in L^s(M,\Lambda^{p,q}T^*M\otimes E),$ then we may assume $f$ is smooth by density.
 Set  
 $$u(x):=\int_{M}\langle f(y),\bar\partial_y^* G_{p,q}(x,y)\rangle dV(y),$$
 where $G_{p,q}$ is the Green form. By Hodge theory and the fact $\mathcal{H}_{p,q}(M,E)=0$, we know 
 $\bar\partial u=f.$ The estimate for $\bar\partial^*G$ from 
 Corollary \ref{cor:green form} (whose constant, by tracing the proof of Corollary \ref{cor:green form}, can be made independent of 
  $K_+$), combined with Young's convolution inequality or Schur's test (as in \cite{DHQ24}),  yields the desired $L^{k,s}$-estimates.
  The last claim follows directly from Corollary \ref{cor:good sobolev}.
 \end{proof}
\section{Sobolev-type inequalities and eigenvalues estimates for K\"ahler families}\label{sec:family}
\indent In this section, we will give a precise statement and provide a more general version of Theorem \ref{thm:family version}
 within the framework of \cite{GPSS24}.
To do this, we firstly recall the set-up of \cite{GPSS24} and state some lemms. \\
\indent Let \((X, \omega_0)\) be an \(n\)-dimensional compact K\"ahler manifold equipped with a K\"ahler metric \(\omega_0\). Let \(\mathcal{K}(X)\) be the space of K\"ahler metrics on \(X\) and define the \(p\)-th Nash-Yau entropy of a K\"ahler metric \(\omega \in \mathcal{K}(X)\) associated to \((X, \omega_0)\) via
\[
V_\omega:= \int_{X} \omega^n,\ \mathcal{N}_{X, \omega_0, m}(\omega) := \frac{1}{V_\omega} \int_{X} \left| \log \left( (V_\omega)^{-1} \frac{\omega^n}{\omega_0^n} \right) \right|^m \omega^n, \quad ,
\]
for \(m > 0\). For given parameters $A,B>0$, $m>n$, define the following set of admissible metrics: 
\[
\mathcal{V}(X, \omega_0, n,m,A, B) := \left\{ \omega \in \mathcal{K}(X)\right.\left|\ I_\omega\leq A,\ \mathcal{N}_{X, \omega_0, m}(\omega) \leq B \right\},
\]
where $I_\omega:=[\omega]\cdot[\omega_0]^{n-1}$ is the intersection number of the K\"ahler classes $[\omega]$ and $[\omega_0]$.\\
\indent By the works of \cite{GPSS22} and \cite{GPSS24}, we have the following Sobolev-type inequality:
\begin{lem}\label{lem:sobolev family functions}
Given $m>n,\ A>0,\ B>0$, there exist $\alpha:=\alpha(n,m)>1$ and $C:=C(X,\omega_0,n,m,\alpha,A,B)>0$ such that for any K\"ahler metric $\omega\in \mathcal{V}(X, \omega_0, n,m,A,B)$
and any $f\in C^1(X)$, the following inequality holds:
$$\left(\frac{1}{V_\omega}\int_{X}|f|^{2\alpha}\omega^n\right)^{1/\alpha}\leq C
\left(\frac{I_\omega}{V_\omega}\int_{X}|\nabla f|^2\omega^n+\frac{1}{V_\omega}\int_{X}|f|^2\omega^n\right).$$
\end{lem}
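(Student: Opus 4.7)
The plan is to follow the approach of \cite{GPSS22} and \cite{GPSS24}, which derives such Sobolev-type inequalities from a uniform diagonal upper bound for the heat kernel of the scalar Laplacian $\Delta_\omega$. Concretely, I would first establish an ultracontractive estimate
$$\sup_{x\in X} H_\omega(t,x,x) \leq C\,V_\omega^{-1}\left(\frac{I_\omega}{t}\right)^n, \qquad 0<t\leq I_\omega,$$
with $C=C(X,\omega_0,n,m,A,B)$, and then invoke the Varopoulos-Davies-Nash dictionary (cf.\ \cite{D90}, \cite{SC02}, \cite{Z11}) to convert this diagonal bound into the desired Sobolev inequality, with the factor $I_\omega$ absorbed through the time scaling in the heat kernel.

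For the diagonal bound, I would use the auxiliary complex Monge-Amp\`ere technique of Guo-Phong-Song-Sturm. Fix $t>0$, $x_0\in X$ and set $u(y):=H_\omega(t,x_0,y)$, which is normalized so that $\int_X u\,\omega^n=1$. Solve the auxiliary equation
$$(\omega_0+i\partial\bar\partial\varphi)^n=\frac{u^p}{\int_X u^p\,\omega_0^n}\,\omega_0^n$$
for a suitable $p>1$, normalized by $\sup_X\varphi=0$. The $L^\infty$-stability theorem of \cite{GPSS22} produces a uniform bound $\sup_X|\varphi|\leq C$ using precisely the entropy bound $\mathcal{N}_{X,\omega_0,m}(\omega)\leq B$ and the intersection bound $I_\omega\leq A$. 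Feeding this into a Moser-type iteration for $\int_X u^p\,\omega^n$, and using $\varphi$ as a test function to compare $\omega^n$ against $\omega_0^n$, yields the diagonal heat-kernel bound with the claimed power of $I_\omega/t$.

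Once the diagonal bound is established, the Varopoulos-Davies-Nash argument produces a Nash inequality for the zero-mean part $g:=f-V_\omega^{-1}\int_X f\,\omega^n$, and a Moser iteration then upgrades this to
$$\left(\frac{1}{V_\omega}\int_X |g|^{2\alpha}\omega^n\right)^{1/\alpha}\leq C'\,\frac{I_\omega}{V_\omega}\int_X|\nabla g|^2\,\omega^n$$
with $\alpha=\alpha(n,m)>1$ (at best $\alpha=n/(n-1)$, possibly slightly smaller depending on the Monge-Amp\`ere stability constants). Absorbing the zero mode via the elementary bound $\|V_\omega^{-1}\int_X f\,\omega^n\|_{L^{2\alpha}(\omega^n)}^2\leq V_\omega^{1/\alpha-1}\|f\|_{L^2(\omega^n)}^2$ then produces the inequality in the stated form, with the $V_\omega^{-1}\int_X|f|^2\omega^n$ term accounting for the constant zero-frequency mode.

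The main obstacle is the uniform $L^\infty$-control on $\varphi$ in the Monge-Amp\`ere step, which must be invoked as a black box from \cite{GPSS22}; its proof relies on a delicate test-function/maximum-principle argument combined with the entropy-based comparison between $\omega^n$ and $\omega_0^n$, and is precisely where the dependence $\alpha=\alpha(n,m)$ on the entropy exponent first arises. Tracking this dependence --- in particular, verifying that the admissible $\alpha$ depends only on $n$ and $m$ and not on the specific metric $\omega\in\mathcal{V}(X,\omega_0,n,m,A,B)$ --- is the subtle point. All subsequent steps (Moser iteration for the heat kernel, Nash's equivalence, the Sobolev-from-Nash upgrade, and the zero-mode absorption) are standard semigroup-theoretic manipulations and present no essential difficulty.
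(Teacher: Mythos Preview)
The paper does not prove this lemma at all: it is stated as a direct citation of \cite{GPSS22} and \cite{GPSS24} (see the sentence immediately preceding the lemma, ``By the works of \cite{GPSS22} and \cite{GPSS24}, we have the following Sobolev-type inequality''), and no argument is given. So there is no ``paper's own proof'' to compare your proposal against; the authors treat this as an imported black box.

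Your outline is a plausible high-level summary of the strategy in the cited GPSS papers, but note that the actual route in \cite{GPSS23} is somewhat different from what you sketch: rather than solving an auxiliary Monge--Amp\`ere equation with right-hand side built from the heat kernel itself, they first obtain uniform Green's function estimates (via the auxiliary Monge--Amp\`ere/comparison method applied to carefully chosen test data) and then pass to heat-kernel and Sobolev bounds. Your idea of plugging $u=H_\omega(t,x_0,\cdot)$ into the Monge--Amp\`ere right-hand side is not quite how the argument runs; the entropy bound is used to control the density $\omega^n/\omega_0^n$, not the heat kernel directly. If you intend to fill in a proof here, you should either cite the relevant theorem from \cite{GPSS23} verbatim or follow their Green's-function route rather than the variant you describe.
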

\indent For any K\"ahler metric $\omega$ on $X$, let $H_{\omega}$ be the heat kernel of $(X,\omega)$ for functions.
Fix $p,q\in\{0,\cdots,n\}$. Let $E$ be a Hermitian holomorphic vector bundle of rank $r$ over $X$, and let $H_{p,q,\omega}$ 
be the heat kernel of $(X,\omega)$ with respect to the $\bar\partial$-Laplacian $\square_{p,q,\omega}$ which correpsonds
to the metric $\omega$. By adopting similar notations as in \ref{thm: heat kernel}, there are  corresponding distance functions $d_{\omega}(\cdot,\cdot)$,
Weitzenb\"ock curvature operator $\mathfrak{Ric}_{p,q,\omega}^E$,
Hodge numbers $b_{p,q,\omega}$, first nonzero eigenvalues $\mu_{1,\omega}$, eigensections $\phi_{1,\omega},\cdots,\phi_{b_{p,q,\omega},\omega}$
and Bergman projections $\mathcal{P}_\omega$.
For our purpose, we introducing the following set of K\"ahler metrics:
For any $A>0,\  B>0,\ K\geq 0,\ m>n$, define 
\[
\mathcal{V}_{p,q}^E(X, \omega_0, n,m,A, B,K) := \left\{ \omega \in \mathcal{V}(X, \omega_0, n,m,A, B)\right.\left|\ 
\mathfrak{Ric}_{p,q,\omega}^E\geq -K\right\}.
\]
Combining Theorem \ref{thm: heat kernel} and Lemma \ref{lem:sobolev family functions}, one can conclude the following:
\begin{prop}\label{prop:family}
Given $m>n,\ A>0,\ B>0,\ K\geq 0$, there exist $\alpha:=\alpha(n,m)>1$ and $C:=C(X,\omega_0,n,m,\alpha,A,B)>0$ such that 
for any K\"ahler metric $\omega\in \mathcal{V}_{p,q}^E(X, \omega_0, n,m,A,B,K)$, we have the following estimates of heat kernels:
     $$|H_{p,q,\omega}(t,x,y)|\leq \frac{C}{V_\omega}\left(\frac{I_\omega}{t}\right)^{\frac{\alpha}{\alpha-1}}e^{Kt+I_\omega^{-1}t}e^{-\frac{d_\omega(x,y)^2}{9t}},$$
$$\left|H_{p,q,\omega}(t,x,y)-\sum_{k=1}^{b_{p,q,\omega}}\phi_{k,\omega}(x)\otimes \phi_{k,\omega}(y)\right|\leq \frac{Cr}{V_\omega}\left(\frac{I_\omega}{t}\right)^{\frac{2\alpha}{\alpha-1}}e^{\frac{2(KI_\omega+1)\alpha}{(\alpha-1)I_\omega\mu_{1,\omega}}},$$
for all $(t,x,y)\in\mr_+\times X\times X$.
\end{prop}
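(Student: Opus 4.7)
The plan is to apply parts (i) and (ii) of Theorem \ref{thm: heat kernel} to each individual metric $\omega$ in the admissible family, reading off the Sobolev constants $\beta,\gamma$ directly from Lemma \ref{lem:sobolev family functions} and then tracking the resulting algebraic simplifications. Multiplying the inequality of Lemma \ref{lem:sobolev family functions} through by $V_\omega^{1/\alpha}$ yields
$$\left(\int_X|f|^{2\alpha}\omega^n\right)^{1/\alpha}\leq C I_\omega V_\omega^{\frac{1}{\alpha}-1}\int_X|\nabla f|^2\omega^n+CV_\omega^{\frac{1}{\alpha}-1}\int_X|f|^2\omega^n,$$
which is exactly the hypothesis $(\clubsuit)$ of Theorem \ref{thm: heat kernel} with
$$\beta=CI_\omega V_\omega^{\frac{1}{\alpha}-1},\qquad \gamma=CV_\omega^{\frac{1}{\alpha}-1},\qquad \frac{\gamma}{\beta}=\frac{1}{I_\omega}.$$
The curvature hypothesis $\mathfrak{Ric}^E_{p,q,\omega}\geq -K$ is built into the definition of $\mathcal{V}_{p,q}^E(X,\omega_0,n,m,A,B,K)$, so every assumption of Theorem \ref{thm: heat kernel} is available.

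For the first bound, I would invoke Theorem \ref{thm: heat kernel}(i) and observe the identity $(1/\alpha-1)\cdot\alpha/(\alpha-1)=-1$, which gives
$$\beta^{\frac{\alpha}{\alpha-1}}=C^{\frac{\alpha}{\alpha-1}}I_\omega^{\frac{\alpha}{\alpha-1}}V_\omega^{-1},$$
while the exponential factor $e^{Kt+\gamma t/\beta}$ becomes $e^{Kt+t/I_\omega}$. Regrouping the powers of $t$ produces the claimed expression $\frac{C}{V_\omega}(I_\omega/t)^{\alpha/(\alpha-1)}e^{Kt+I_\omega^{-1}t}e^{-d_\omega(x,y)^2/(9t)}$.

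For the second bound, I would apply Theorem \ref{thm: heat kernel}(ii). Now
$$\beta^{\frac{2\alpha}{\alpha-1}}=C^{\frac{2\alpha}{\alpha-1}}I_\omega^{\frac{2\alpha}{\alpha-1}}V_\omega^{-2},$$
and the $|M|$ appearing in Theorem \ref{thm: heat kernel}(ii) is the Riemannian volume $V_\omega/n!$, so the product $\beta^{2\alpha/(\alpha-1)}|M|$ contributes $V_\omega^{-1}I_\omega^{2\alpha/(\alpha-1)}$ up to a harmless numerical constant. The exponent in the remaining exponential is
$$\left(\frac{K}{\mu_{1,\omega}}+\frac{1}{I_\omega\mu_{1,\omega}}\right)\frac{2\alpha}{\alpha-1}=\frac{2(KI_\omega+1)\alpha}{(\alpha-1)I_\omega\mu_{1,\omega}},$$
matching the claim.

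The proof is essentially mechanical once the identification $\gamma/\beta=1/I_\omega$ is made; the only thing to monitor carefully is the discrepancy of a factor $n!$ between the volume form $\omega^n$ used in Lemma \ref{lem:sobolev family functions} and the Riemannian volume $dV_\omega=\omega^n/n!$ used in Theorem \ref{thm: heat kernel}, along with the definition of $|\nabla f|^2$. These rescalings only alter the universal constants and are absorbed into $C$. The potentially subtle point, which is really the main obstacle in spirit, is to confirm that the $K$- and $V_\omega$-dependence is entirely explicit in the statements of Theorem \ref{thm: heat kernel}(i)--(ii), so that the global constant $C$ in Proposition \ref{prop:family} can indeed be taken to depend only on $X,\omega_0,n,m,\alpha,A,B$; a careful inspection of those statements shows this is the case.
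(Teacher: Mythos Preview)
Your proposal is correct and follows precisely the route the paper indicates: the paper simply states that Proposition \ref{prop:family} follows by ``Combining Theorem \ref{thm: heat kernel} and Lemma \ref{lem:sobolev family functions},'' and you have carried out exactly that combination, correctly identifying $\beta=CI_\omega V_\omega^{1/\alpha-1}$, $\gamma/\beta=1/I_\omega$, and tracking the exponents through parts (i) and (ii). Your remarks about the $n!$ discrepancy between $\omega^n$ and $dV_\omega$ and about the explicit $K$- and $V_\omega$-dependence are the only points requiring care, and you have handled them appropriately.
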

By Lemma \ref{lem:eigensection} and Proposition \ref{prop:family}, one may modified the proof of Lemma \ref{lem:weak type} a little
to conclude the following result:
\begin{prop}\label{prop:family weak type}
Given $m>n,\ A>0,\ B>0,\ K\geq 0$, choose an $\alpha$ as in Propoistion \ref{prop:family}. For any $1\leq \ell<(2\alpha)/(\alpha-1)$, set 
$$-\rho:=\frac{1}{2}+\frac{\alpha}{(1-\alpha)\ell}<0,$$ 
   then there is a constant $C:=C(X,\omega_0,n,m,\ell,\alpha,A,B)$ such that for all $\delta>0$, 
    any $\omega\in \mathcal{V}_{p,q}^E(X, \omega_0, n,m,A,B),$ and any $L^{\ell}$-integrable $E$-valued $(p,q)$-form 
   $g$ on $(X,\omega)$, we have 
   $$\left|\{x\in X|\ \square_{p,q,\omega}^{-1/2}g(x)>\delta\}\right|\leq C\left(\frac{\eta^\ell}{V_\omega}\right)^{\frac{1}{2\rho}}\delta^{-\frac{\ell}{2\rho}-\ell}\left(\int_{X}|g|^{\ell}\omega^n\right)^{1+\frac{1}{2\rho}},$$
   where 
   $$\eta:=2r(I_\omega K+1)^{\frac{2\alpha}{\alpha-1}}+e^K
   +I_\omega^{\frac{2\alpha}{\alpha-1}}e^{K+I_\omega^{-1}}+rI_\omega^{\frac{2\alpha}{\alpha-1}}e^{\frac{2(KI_\omega+1)\alpha}{(\alpha-1)I_\omega\mu_{1,\omega}}}.$$
  % $$\eta_2:=\max\left\{r(I_\omega K+1)^{\frac{2\alpha}{\alpha-1}}+I_\omega^{\frac{\alpha}{\alpha-1}}e^{K+I_\omega^{-1}}
   %,rI_\omega^{\frac{2\alpha}{\alpha-1}}e^{\frac{KI_\omega+1}{I_\omega\mu_{1,\omega}}}\right\}.$$
\end{prop}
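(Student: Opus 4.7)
The plan is to imitate the proof of Lemma \ref{lem:weak type} line by line, replacing the absolute constants there by the $\omega$-dependent quantities furnished by Proposition \ref{prop:family}, Lemma \ref{lem:sobolev family functions}, and Lemma \ref{lem:eigensection}. Setting
\begin{equation*}
K_\omega(t,x,y) := H_{p,q,\omega}(t,x,y) - \sum_{k=1}^{b_{p,q,\omega}} \phi_{k,\omega}(x) \otimes \phi_{k,\omega}(y),
\end{equation*}
one has the representation $\square_{p,q,\omega}^{-1/2} g(x) = \Gamma(1/2)^{-1} \int_0^\infty t^{-1/2} \int_X \langle K_\omega(t,x,y), g(y)\rangle\, \omega^n(y)\, dt$. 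As in the proof of Lemma \ref{lem:weak type}, I split the time integral at a threshold $T > 0$ into pieces $T_1 g$ (over $(0,T]$) and $T_2 g$ (over $(T,\infty)$), so the argument reduces to producing two uniform bounds on $K_\omega$.

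The crucial input is that Lemma \ref{lem:sobolev family functions} provides Sobolev constants $\beta = C V_\omega^{1/\alpha-1} I_\omega$ and $\gamma = C V_\omega^{1/\alpha-1}$, with $C = C(X,\omega_0,n,m,\alpha,A,B)$. Feeding these into Lemma \ref{lem:eigensection} together with $\mathfrak{Ric}_{p,q,\omega}^E \geq -K$ yields $\sup_X |\phi_{k,\omega}|^2 \leq C V_\omega^{-1} (I_\omega K + 1)^{\alpha/(\alpha-1)}$, and Corollary \ref{cor:estimate1} similarly bounds $b_{p,q,\omega} \leq C r (I_\omega K + 1)^{\alpha/(\alpha-1)}$. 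Combined with the two heat kernel estimates of Proposition \ref{prop:family} and a case-split at $t = 1$ (using $|H_{p,q,\omega}| \leq e^{Kt} H_\omega$ with $\int_X H_\omega\, \omega^n = 1$ in the short-time regime, and the elementary inequality $t^{-2\alpha/(\alpha-1)} \leq t^{-\alpha/(\alpha-1)}$ on $[1,\infty)$), one obtains the uniform bounds
\begin{equation*}
\sup_{(t,x)} \int_X |K_\omega(t,x,y)|\, \omega^n(y) \leq C\eta, \qquad |K_\omega(t,x,y)| \leq \frac{C\eta}{V_\omega}\, t^{-\alpha/(\alpha-1)},
\end{equation*}
valid for all $t > 0$, where the four summands of $\eta$ correspond respectively to the bound on the spectral sum, the short-time $e^{Kt}$ factor, the pointwise short-time heat kernel bound, and the large-time bound from Proposition \ref{prop:family}(ii).

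With these two estimates in hand the argument of Lemma \ref{lem:weak type} proceeds without modification. A H\"older-Fubini computation using the $L^1$-in-$y$ bound and the symmetry $|K_\omega(t,x,y)| = |K_\omega(t,y,x)|$ yields $\|T_1 g\|_\ell \leq 2\sqrt{T}\, C\eta\, \|g\|_\ell$, while a H\"older estimate in $y$ combined with an $L^\infty$--$L^1$ interpolation on $K_\omega(t,x,\cdot)$ gives $|T_2 g(x)| \leq C\eta V_\omega^{-1/\ell} T^{-\rho}\|g\|_\ell$. Selecting $T$ so that the $T_2$-bound equals $\delta/2$, applying Chebyshev to $T_1 g$, and collecting the resulting powers of $\eta$, $V_\omega$ and $\|g\|_\ell$ yields the stated weak-type inequality. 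The main bookkeeping obstacle will be verifying that the four summands of $\eta$ genuinely dominate every $K$- and $\mu_{1,\omega}$-dependent prefactor arising along the way, and that the final exponents of $V_\omega$ and $\eta$ collapse into the displayed form $(\eta^\ell/V_\omega)^{1/(2\rho)}$.
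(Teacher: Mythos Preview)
Your proposal is correct and follows exactly the same route as the paper: the paper's proof simply says to rerun the argument of Lemma \ref{lem:weak type} with the constants $C_1,\dots,C_6$ there replaced by the $\omega$-dependent values coming from Proposition \ref{prop:family}, Lemma \ref{lem:eigensection}, and Corollary \ref{cor:estimate1}, which is precisely what you outline (and in more detail). Your identification of $\beta,\gamma$ from Lemma \ref{lem:sobolev family functions}, of $C_3\le C\eta$ and $C_6\le C\eta/V_\omega$, and of the four summands of $\eta$ with the corresponding contributions matches the paper's list of constants exactly.
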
 
\begin{proof}
Fix a $\omega\in\mathcal{V}_{p,q}^E(X, \omega_0, n,m,A,B)$, then the constants $C_1,\cdots,C_6$ in the proof of Lemma \ref{lem:weak type} can be taken to be 
$$C_1=\frac{C}{V_\omega}(I_\omega K+1)^{\frac{\alpha}{\alpha-1}},\ C_2=C_5=\frac{Cr}{V_\omega}I_\omega^{\frac{2\alpha}{\alpha-1}}e^{\frac{2(KI_\omega+1)\alpha}{(\alpha-1)I_\omega\mu_{1,\omega}}},$$
$$ C_3:=\max\{C_1b_{p,q,\omega}V_\omega+e^K,C_2V_\omega\},\  C_4=\frac{C}{V_\omega}I_\omega^{\frac{\alpha}{\alpha-1}}e^{K+I_\omega^{-1}},$$
$$ C_6:=\max\{C_1b_{p,q,\omega}+C_4,C_5\},$$
where by Corollary \ref{cor:estimate1}, 
$$b_{p,q,\omega}\leq Cr(I_\omega K+1)^{\frac{\alpha}{\alpha-1}},$$
and $C$ is a constant only depending on $X,\omega_0,n,m,\alpha,A,B$,

\end{proof}
\begin{comment}
 One needs to note that if 
$$\ell<\frac{1}{2}\left(2+\frac{2\alpha}{\alpha-1}\right),$$
then 
$$\frac{1}{2\rho}\leq 2\alpha,$$
and thus 
$$\max\{\eta_1,\eta_2\}^{1/(2\rho)}\leq \max\left\{1,\max\{\eta_1,\eta_2\}^{2\alpha}\right\}.$$
\end{comment}
Use Marcikiewicz interpolation theorem (see \cite[Theorem 1.4.3]{H20} and its proof), the following theorem can be concluded from 
Proposition \ref{prop:family weak type}.
\begin{thm}\label{prop:family strong type}
Under the same assumptions and notations of Proposition \ref{prop:family weak type}.
Suppose $1\leq k<\infty$ satisfy
 $$
 \begin{cases}
                             k<\ell/(2\rho)+\ell, & \mbox{if } \ell=1, \\
                             k\leq \ell/(2\rho)+\ell, & \mbox{if }  1<\ell<(2\alpha)/(\alpha-1), \\
    \end{cases}
 $$
 then there is a constant $C:=C(X,\omega_0,n,m,k,\ell,\alpha,A,B)>0$ such that 
 for any $\omega\in \mathcal{V}_{p,q}^E(X, \omega_0, n,m,A,B)$ and any $L^{\ell}$-integrable $E$-valued $(p,q)$-form    $f$ on $(X,\omega),$ we have 
 $$\left(\int_{X}\left|\square_{p,q}^{-1/2}f\right|^{k}\omega^n\right)^{1/k}
 \leq CV_\omega^{\frac{1}{k}-\frac{1}{\ell}}(1+\eta)\left(\int_{X}\left|f\right|^{\ell}\omega^n\right)^{\frac{1}{\ell}},$$
 where $\eta$ is defined in Proposition \ref{prop:family weak type}. 
\end{thm}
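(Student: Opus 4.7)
The plan is to derive the strong-type bound by Marcinkiewicz interpolation applied to the weak-type estimate of Proposition \ref{prop:family weak type}, in direct parallel to how Corollary \ref{cor:strong type} was deduced from Lemma \ref{lem:weak type}. The additional bookkeeping compared to that derivation is that here I must keep track of the explicit dependence of the interpolation constant on the metric-dependent quantities $V_\omega$ and $\eta$.

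First I would rephrase Proposition \ref{prop:family weak type} in the standard weak-type form: for each $\ell \in [1, 2\alpha/(\alpha-1))$, the operator $T := \square_{p,q,\omega}^{-1/2}$ is of weak type $(\ell, k^*(\ell))$ where $k^*(\ell) := \ell/(2\rho) + \ell$, with weak-type norm bounded by $C V_\omega^{1/k^*(\ell) - 1/\ell}\, \eta^{1 - \ell/k^*(\ell)}$ for a constant $C$ depending only on $X, \omega_0, n, m, \ell, \alpha, A, B$. A short algebraic manipulation from $-\rho = 1/2 + \alpha/((1-\alpha)\ell)$ gives the key identity $1/\ell - 1/k^*(\ell) = (\alpha-1)/(2\alpha)$, which is constant in $\ell$; this means that all admissible weak-type pairs $(\ell, k^*(\ell))$ lie on a single line in the interpolation diagram.

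Second, for a target pair $(\ell, k)$ with $1 < \ell < 2\alpha/(\alpha-1)$ and $k \leq k^*(\ell)$, I would select auxiliary exponents $\ell_0 < \ell < \ell_1$ in $[1, 2\alpha/(\alpha-1))$ depending only on $(k, \ell, \alpha)$, apply Proposition \ref{prop:family weak type} at both $\ell_0$ and $\ell_1$, and invoke the Marcinkiewicz interpolation theorem (\cite[Theorem 1.4.3]{H20}) to obtain strong type at the interior point $(\ell, k^*(\ell))$. Because the finite-volume inclusion $L^{k^*(\ell)}(X,\omega) \hookrightarrow L^k(X,\omega)$ costs exactly $V_\omega^{1/k - 1/k^*(\ell)}$, a single application of H\"older's inequality then yields strong type $(\ell, k)$ with the claimed form of the constant. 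For the endpoint case $\ell = 1$ with $k < k^*(1)$ strict, the standard truncation proof of the Marcinkiewicz theorem (decomposing $g = g\mathbf{1}_{\{|g|\le\lambda\}} + g\mathbf{1}_{\{|g|>\lambda\}}$ and optimizing $\lambda$ over the two weak-type bounds at $\ell_0=1$ and some $\ell_1 > 1$) yields strong $(1,k)$ directly, which is why the strict inequality is needed exactly in that case.

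Third, I would carefully combine the constants. The Marcinkiewicz weighted-geometric-mean of the two weak-type norms at $\ell_0, \ell_1$, multiplied by the H\"older factor, produces a prefactor of $V_\omega^{1/k - 1/\ell}$: the $V_\omega$-exponent collapses precisely because $1/\ell' - 1/k^*(\ell')$ equals the same constant $(\alpha-1)/(2\alpha)$ at every admissible $\ell'$. The corresponding $\eta$-exponent is bounded by a quantity depending only on $(k,\ell,\alpha)$, so the $\eta$-dependence is majorized by $C(1+\eta)$ using $\eta \geq 0$. The main obstacle is controlling the Marcinkiewicz constant, which blows up as the target exponent approaches an endpoint of the interpolation interval; one handles this by choosing $\ell_0, \ell_1$ depending on $(k,\ell,\alpha)$ but independently of the metric $\omega$, so that the Marcinkiewicz constant is absorbed into the $(k,\ell,\alpha,\ldots)$-dependence asserted in the statement.
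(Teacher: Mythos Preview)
Your proposal is correct and follows essentially the same approach as the paper: both deduce the strong-type bound from Proposition \ref{prop:family weak type} by Marcinkiewicz interpolation (the paper cites \cite[Theorem 1.4.3]{H20} and its proof, without further detail). Your explicit bookkeeping of the $V_\omega$- and $\eta$-dependence, in particular the observation that $1/\ell - 1/k^*(\ell) = (\alpha-1)/(2\alpha)$ is independent of $\ell$, is exactly what is needed to verify the stated form of the constant, and is implicit in the paper's one-line reference.
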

Now we give a more general version of the first part of Theorem \ref{thm:family version}, which follows from 
Theorem \ref{prop:family strong type} by using the same ideas as in the proof of Corollary \ref{cor:good proof},
and so we omit the details.
\begin{thm}\label{thm:sobolev family good}
Given $m>n,\ A>0,\ B>0,\ K\geq 0.$ 
Choose $\alpha$ as in Proposition \ref{prop:family}.
For any 
$$1\leq \ell<\frac{1}{2}\left(2+\frac{2\alpha}{\alpha-1}\right),$$
set 
$$-\rho:=\frac{1}{2}+\frac{\alpha}{(1-\alpha)\ell}<0.$$
Suppose $1\leq k<\infty$ satisfy
 $$
 \begin{cases}
                             k<\ell/(2\rho)+\ell, & \mbox{if } \ell=1, \\
                             k\leq \ell/(2\rho)+\ell, & \mbox{if }  \ell>1, \\
    \end{cases}
 $$
 then there is a constant $C:=C(X,\omega_0,n,m,k,\ell,\alpha,A,B)>0$ such that 
 for any  K\"ahler metric $\omega\in \mathcal{V}_{p,q}^E(X, \omega_0, n,m,A,B)$ and any $f\in C^2(X)$, the following Sobolev-type inequality holds:
$$\left(\int_{X}\left|f-\mathcal{P}_\omega f\right|^{k}\right)^{1/k}\leq CV_\omega^{\frac{1}{k}-\frac{1}{\ell}}
(1+\eta)\left(\int_{X}|\square_{p,q}f|^{\ell}\right)^{1/\ell},$$
where $\eta$ is defined in Proposition \ref{prop:family weak type}. 
\end{thm}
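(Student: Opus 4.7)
The plan is to follow the same strategy as in the proof of Corollary \ref{cor:good proof}, with Theorem \ref{prop:family strong type} playing the role there played by Corollary \ref{cor:strong type}. The structural input is Hodge theory: the component $f - \mathcal{P}_\omega f$ orthogonal to $\mathcal{H}_{p,q}(X,E)$ can be recovered from $\square_{p,q,\omega} f$ through the Green operator $\mathcal{G}_{p,q,\omega}$, and the spectral calculus factors $\mathcal{G}_{p,q,\omega} = \square_{p,q,\omega}^{-1/2}\circ\square_{p,q,\omega}^{-1/2}$.

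Concretely, I would first reduce to smooth $f$ by a density argument. Next, set $g := f - \mathcal{P}_\omega f$ and $h := \square_{p,q,\omega} f$; since $\mathcal{P}_\omega$ annihilates the range of $\square_{p,q,\omega}$, we have $h = \square_{p,q,\omega} g$, and the orthogonality of $g$ to harmonic forms gives $g = \mathcal{G}_{p,q,\omega} h$. Using the spectral identity
\begin{equation*}
g \;=\; \square_{p,q,\omega}^{-1/2}\bigl(\square_{p,q,\omega}^{-1/2} h\bigr),
\end{equation*}
I would then apply Theorem \ref{prop:family strong type} to control $\|g\|_{L^k(X,\omega)}$ by $\|h\|_{L^\ell(X,\omega)}$. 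The constraints on $k$ and $\ell$ in the statement are exactly the hypotheses that make Theorem \ref{prop:family strong type} applicable through the intermediate exponent, and substituting $h = \square_{p,q,\omega} f$ then produces the advertised inequality.

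The main obstacle will be the careful bookkeeping of the constant's dependence on $V_\omega$ and $\eta$. After iterating $\square_{p,q,\omega}^{-1/2}$ one must choose the intermediate Lebesgue exponent so that the composition of the two weak-type bounds coming from Proposition \ref{prop:family weak type} and the Marcinkiewicz interpolation step collapse to the single factor $V_\omega^{1/k-1/\ell}(1+\eta)$ rather than leaving behind an $(1+\eta)^2$-term; this parallels the reason why Theorem \ref{thm:family version} exhibits $(1+\eta)^2$ only after squaring the $\ell=2,\ k=2\alpha$ instance of the present statement. All other dependencies collapse to $X,\omega_0,n,m,k,\ell,\alpha,A,B$, because the curvature bound $K$ and spectral data such as $\mu_{1,\omega}$ enter the final estimate only through $\eta$.
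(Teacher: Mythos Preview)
Your approach is exactly what the paper does: it says the result ``follows from Theorem~\ref{prop:family strong type} by using the same ideas as in the proof of Corollary~\ref{cor:good proof}, and so we omit the details.'' Your reduction to smooth $f$, the identification $f-\mathcal{P}_\omega f=\mathcal{G}_{p,q,\omega}(\square_{p,q,\omega}f)$, and the factoring $\mathcal{G}_{p,q,\omega}=\square_{p,q,\omega}^{-1/2}\circ\square_{p,q,\omega}^{-1/2}$ are precisely the steps of Corollary~\ref{cor:good proof}.

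One comment on your final paragraph. Your parallel to Theorem~\ref{thm:family version} is slightly miscalibrated: the right-hand side there is $\int_X(|\bar\partial f|^2+|\bar\partial^*f|^2)\,\omega^n=\|\square_{p,q,\omega}^{1/2}f\|_2^2$, not $\|\square_{p,q,\omega}f\|_2^2$. So Theorem~\ref{thm:family version} arises from squaring a \emph{single} application of Theorem~\ref{prop:family strong type} (at $\ell=2$, $k=2\alpha$, applied to $g=\square_{p,q,\omega}^{1/2}f$), not from the present theorem. Consequently, the appearance of $(1+\eta)^2$ there does not corroborate that two applications of $\square_{p,q,\omega}^{-1/2}$ can be made to collapse to a single $(1+\eta)$; a straightforward composition through any intermediate exponent yields $(1+\eta)^2$, and there is no evident mechanism in Proposition~\ref{prop:family weak type} or the Marcinkiewicz step to cancel one factor. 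This does not affect the correctness of your overall strategy---it is the paper's own---but your stated resolution of the bookkeeping obstacle is not convincing as written.
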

In the last, let us give a lower bound for the first eigenvalues of $\square_{p,q,\omega}$ under some stronger assumptions,
which clearly implies the second part of Theorem \ref{thm:family version}.
For any K\"ahler metric $\omega$ on $X$, let  $0<\mu_{1,\omega}\leq \mu_{2,\omega}<\cdots$ be all nonzero eigenvalues of $\square_{p,q,\omega}$.
\begin{prop}\label{prop:first eigenvalue family}
Given $m>n,\ A>0,\ B>0$, then there exists  a constant $c:=c(X,\omega_0,n,m,A,B)>0$
such that for any $\omega\in \mathcal{V}_{p,q}^E(X, \omega_0, n,m,A,B,0)$ satisfying $b_{p,q,\omega}\geq 1$, we have 
$$\mu_{1,\omega}\geq \frac{c}{I_\omega}.$$
\end{prop}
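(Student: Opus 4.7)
The strategy is to exploit the rigidity of harmonic forms forced by $\mathfrak{Ric}^{E}_{p,q,\omega}\geq 0$, reducing the estimate of $\mu_{1,\omega}$ to a Poincar\'e inequality on scalar functions that follows from the family Sobolev inequality of Lemma~\ref{lem:sobolev family functions}.

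First, for any harmonic $\phi\in\mathcal{H}_{p,q,\omega}$ the Bochner--Weitzenb\"ock identity $\tfrac{1}{2}\Delta|\phi|^{2}=|\nabla\phi|^{2}+\langle\mathfrak{Ric}^{E}_{p,q,\omega}\phi,\phi\rangle\geq 0$, together with compactness of $X$, forces $\nabla\phi\equiv 0$, $\mathfrak{Ric}^{E}_{p,q,\omega}\phi\equiv 0$, and $|\phi|^{2}$ constant. Hence an $L^{2}$-orthonormal basis $\phi_{1,\omega},\ldots,\phi_{b_{p,q,\omega},\omega}$ of $\mathcal{H}_{p,q,\omega}$ is pointwise orthogonal with $|\phi_{i,\omega}|^{2}\equiv 1/V_{\omega}$, and $W:=\mathrm{span}(\phi_{i,\omega})$ is a parallel subbundle of $\Lambda^{p,q}T^{*}X\otimes E$ contained in $\ker\mathfrak{Ric}^{E}_{p,q,\omega}$. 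Running the Davies/Saloff-Coste argument of the proof of Theorem~\ref{thm: heat kernel}(i) on the family Sobolev inequality yields a uniform Poincar\'e inequality $\mu_{1}^{0}(\omega)\geq c_{0}/I_{\omega}$ for mean-zero scalar functions, with $c_{0}=c_{0}(X,\omega_{0},n,m,A,B)>0$.

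Now let $\psi$ be a first eigensection of $\square_{p,q,\omega}$ with $\|\psi\|_{L^{2}(\omega)}=1$ and $\psi\perp\mathcal{H}_{p,q,\omega}$; Bochner combined with $\mathfrak{Ric}\geq 0$ gives $\int_{X}|\nabla\psi|^{2}\omega^{n}\leq 2\mu_{1,\omega}$. Define the mean-zero scalar functions $g_{i}:=V_{\omega}\operatorname{Re}\langle\psi,\phi_{i,\omega}\rangle$ and $\tilde g_{i}:=V_{\omega}\operatorname{Im}\langle\psi,\phi_{i,\omega}\rangle$. Using $\nabla\phi_{i,\omega}=0$, $|\phi_{i,\omega}|^{2}=1/V_{\omega}$, and pointwise orthogonality, a direct computation shows
\begin{equation*}
\sum_{i}\bigl(|\nabla g_{i}|^{2}+|\nabla\tilde g_{i}|^{2}\bigr)\leq V_{\omega}|\nabla\psi|^{2},\qquad \sum_{i}\bigl(g_{i}^{2}+\tilde g_{i}^{2}\bigr)=V_{\omega}|P_{W}\psi|^{2},
\end{equation*}
where $P_{W}$ denotes the pointwise orthogonal projection onto $W$. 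Applying the scalar Poincar\'e inequality to each $g_{i},\tilde g_{i}$, summing, and substituting $\mu_{1}^{0}(\omega)\geq c_{0}/I_{\omega}$ yields the key projection estimate $\int_{X}|P_{W}\psi|^{2}\omega^{n}\leq 2I_{\omega}\mu_{1,\omega}/c_{0}$. If $\int|P_{W}\psi|^{2}\omega^{n}\geq 1/2$, one immediately obtains $\mu_{1,\omega}\geq c_{0}/(4I_{\omega})$, and the proof is complete. In the opposite case $\int|P_{W^{\perp}}\psi|^{2}\omega^{n}\geq 1/2$, note that $W^{\perp}$ is also a parallel subbundle and that $\square_{p,q,\omega}$ preserves the decomposition $W\oplus W^{\perp}$, so $\chi:=P_{W^{\perp}}\psi$ is a nonzero eigensection of $\square_{p,q,\omega}|_{W^{\perp}}$ with eigenvalue $\mu_{1,\omega}$ and $\|\chi\|_{L^{2}}^{2}\geq 1/2$. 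Crucially, $W^{\perp}$ admits no parallel section: any such would again be annihilated by $\mathfrak{Ric}^{E}_{p,q,\omega}$ by the Bochner argument above, hence be harmonic, contradicting $W\cap W^{\perp}=0$. Combining Kato's inequality $|\nabla|\chi||\leq|\nabla\chi|$ with the scalar Poincar\'e, via the dichotomy of Proposition~\ref{prop:first eigenvalue} applied to $\chi$ on $W^{\perp}$, then yields $\mu_{1,\omega}\geq c'/I_{\omega}$ uniformly.

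\emph{Main obstacle.} The delicate point is the $W^{\perp}$ sub-case in which $|\chi|$ is nearly constant: the dichotomy of Proposition~\ref{prop:first eigenvalue} with $b\equiv 0$ returns no information directly, and the pointwise minimum eigenvalue of $\mathfrak{Ric}^{E}_{p,q,\omega}|_{W^{\perp}}$ may vanish at points where $\ker\mathfrak{Ric}^{E}_{p,q,\omega}$ is strictly larger than $W$. The cleanest resolution is to iterate the parallel-subbundle decomposition, enlarging $W$ so that the residual $W^{\perp}$ contains no parallel sections of $\ker\mathfrak{Ric}^{E}_{p,q,\omega}$; the iteration terminates because the rank of the fibre is finite. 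An alternative is a compactness-contradiction argument based on the entropy bound $\mathcal{N}_{X,\omega_{0},m}(\omega)\leq B$ and $I_{\omega}\leq A$: any sequence $\omega_{j}\in\mathcal{V}_{p,q}^{E}(X,\omega_{0},n,m,A,B,0)$ with $b_{p,q,\omega_{j}}\geq 1$ and $I_{\omega_{j}}\mu_{1,\omega_{j}}\to 0$ would, upon passage to a GPSS-type limit, contradict the persistence of a parallel harmonic form coming from $b_{p,q,\omega_{j}}\geq 1$.
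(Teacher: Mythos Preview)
Your approach is genuinely different from the paper's, and it has a gap you yourself flag but do not close.

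The paper's proof is a three-line heat-kernel argument: combine the long-time scalar heat kernel bound $H_\omega(t,x,x)\leq V_\omega^{-1}(1+Ce^{-cI_\omega^{-1}t})$ from \cite[Theorem 2.2]{GPSS23} with the pointwise domination $|H_{p,q,\omega}|\leq e^{Kt}H_\omega$ of \cite[Theorem 4.3]{LX10} (here $K=0$), take the trace integral to obtain $b_{p,q,\omega}+e^{-\mu_{1,\omega}t}\leq 1+Ce^{-cI_\omega^{-1}t}$, use $b_{p,q,\omega}\geq 1$ to cancel the constant, and let $t\to\infty$. No Bochner rigidity, no subbundle decomposition, no dichotomy.

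Your route via the parallel subbundle $W$ spanned by harmonic forms is structurally sound through the $P_W\psi$ estimate, but the $W^\perp$ case is not proved. Your ``cleanest resolution'' by iteration does not get off the ground: every parallel section of $\Lambda^{p,q}T^*X\otimes E$ is automatically harmonic (Bochner with $\mathfrak{Ric}\geq 0$), so $W$ already contains all parallel sections and cannot be enlarged; meanwhile $\ker\mathfrak{Ric}^E_{p,q,\omega}$ need not have constant rank, so there is no parallel subbundle to iterate into. The fallback to Proposition~\ref{prop:first eigenvalue} with $b\equiv 0$ returns nothing, exactly as you note, and you give no quantitative substitute. The compactness-contradiction alternative is only named, not executed, and would require a limit theory for $\square_{p,q,\omega}$ under entropy bounds that is nowhere established in the paper. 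As it stands, the proposal proves the result only when $\int|P_W\psi|^2\geq 1/2$, which there is no reason to expect in general. The paper's heat-trace argument sidesteps all of this by never needing to analyze the eigensection $\psi$ directly.
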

\begin{proof}
By \cite[Theorem 2.2]{GPSS23} and \cite[Theorem 4.3]{LX10},
there are constants $C:=C(X,\omega_0,n,m,A,B)>0,\ c:=c(X,\omega_0,n,m,A,B)>0$ such that for any 
$\omega\in \mathcal{V}_{p,q}^E(X, \omega_0, n,m,A,B,0)$, we have 
$$|H_{p,q,\omega}(t,x,x)|\leq \frac{1}{V_\omega}\left(1+Ce^{-cI_{\omega}^{-1}t}\right),\ \forall t\geq I_\omega.$$
Taking traces on both sides, then we get 
$$b_{p,q,\omega}+e^{-\mu_{1,\omega} t}\leq 1+Ce^{-cI_{\omega}^{-1}t},\ \forall t\geq I_\omega,$$
then by assumptions, we know 
$$e^{-\mu_{1,\omega}t}\leq Ce^{-cI_{\omega}^{-1}t}.$$
Let $t\rw \infty$, then we get 
$$\mu_{1,\omega}\geq \frac{c}{I_\omega}.$$
\end{proof}
Similar to the proof of \eqref{equ:eigenvalues estimates}, by Proposition \ref{prop:family}
and \ref{prop:first eigenvalue family}, one can easily get the following estimates of eigenvalues: 
\begin{thm}\label{thm:eigenvalue family}
Given $m>n,\ A>0,\ B>0$, then there are constants $\alpha:=\alpha(n,m)>1$ and  $c:=c(X,\omega_0,n,m,A,B)>0$
such that for any $\omega\in \mathcal{V}_{p,q}^E(X, \omega_0, n,m,A,B,0)$ satisfying $b_{p,q,\omega}\geq 1$, we have 
$$\mu_{k,\omega}\geq \frac{c}{I_\omega}\left(\frac{k}{r}\right)^{\frac{\alpha-1}{\alpha}},\ \forall k\geq 1.$$
\end{thm}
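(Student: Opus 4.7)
\indent The plan is to adapt the Davies--Li style argument used to derive \eqref{equ:eigenvalues estimates} in the proof of Theorem \ref{thm: heat kernel}(ii), now feeding it the uniform on-diagonal heat kernel bound from Proposition \ref{prop:family} and handling the resulting exponential factor by the lower bound from Proposition \ref{prop:first eigenvalue family}. Since the authors explicitly indicate this route, the work is in carrying it through cleanly with the correct family-level constants.

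\indent Fix $\omega\in\mathcal{V}_{p,q}^E(X,\omega_0,n,m,A,B,0)$ with $b_{p,q,\omega}\geq 1$, and let $\alpha:=\alpha(n,m)$ be as in Proposition \ref{prop:family}. Setting $K=0$ there gives the on-diagonal bound
\[
|H_{p,q,\omega}(t,x,x)|\leq \frac{C}{V_\omega}\left(\frac{I_\omega}{t}\right)^{\alpha/(\alpha-1)}e^{I_\omega^{-1}t}.
\]
Passing to the pointwise fibrewise trace (costing at most a factor of $r\binom{n}{p}\binom{n}{q}$, since $H_{p,q,\omega}(t,x,x)$ is a positive semidefinite endomorphism of the fibre of $\Lambda^{p,q}T^*X\otimes E$) and integrating over $X$ against $\omega^n$ produces
\[
\sum_{j\geq 0}e^{-\lambda_{j,\omega}t}\leq Cr\left(\frac{I_\omega}{t}\right)^{\alpha/(\alpha-1)}e^{I_\omega^{-1}t},
\]
where $\{\lambda_{j,\omega}\}$ ranges over all eigenvalues of $\square_{p,q,\omega}$ with multiplicity. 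Discarding all but the first $k$ nonzero terms in the sum and using monotonicity yields
\[
k\,e^{-\mu_{k,\omega}t}\leq Cr\left(\frac{I_\omega}{t}\right)^{\alpha/(\alpha-1)}e^{I_\omega^{-1}t}
\]
for every $k\geq 1$ and every $t>0$.

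\indent Optimizing by taking $t:=1/\mu_{k,\omega}$ converts this to $k\leq eCr(I_\omega\mu_{k,\omega})^{\alpha/(\alpha-1)}e^{1/(I_\omega\mu_{k,\omega})}$. The decisive step is that Proposition \ref{prop:first eigenvalue family}, applicable precisely because $b_{p,q,\omega}\geq 1$, furnishes a constant $c_0:=c_0(X,\omega_0,n,m,A,B)>0$ with $I_\omega\mu_{1,\omega}\geq c_0$. Since $\mu_{k,\omega}\geq \mu_{1,\omega}$, the exponential factor $e^{1/(I_\omega\mu_{k,\omega})}\leq e^{1/c_0}$ is uniformly bounded in both $k$ and $\omega$. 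Solving the remaining algebraic inequality for $\mu_{k,\omega}$ then produces exactly $\mu_{k,\omega}\geq (c/I_\omega)(k/r)^{(\alpha-1)/\alpha}$ with $c$ depending only on $X,\omega_0,n,m,A,B$.

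\indent The only substantive point where something could go wrong is in controlling the exponential $e^{1/(I_\omega\mu_{k,\omega})}$, and this is neutralized by Proposition \ref{prop:first eigenvalue family}. The remaining items are bookkeeping: distinguishing the full spectrum $\{\lambda_{j,\omega}\}$ from the nonzero eigenvalues $\{\mu_{j,\omega}\}$ is harmless because excluding the finite-dimensional kernel only drops $b_{p,q,\omega}$ terms equal to $1$, and the fibre-rank factor $r\binom{n}{p}\binom{n}{q}$ introduced by the trace is either absorbed into a constant depending on $n$ or, in the case of $r$, kept explicit so as to appear in the $(k/r)^{(\alpha-1)/\alpha}$ factor of the final bound.
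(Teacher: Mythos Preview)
Your proof is correct and follows essentially the same route the paper indicates: the trace of the on-diagonal heat kernel bound from Proposition \ref{prop:family} (with $K=0$) yields $k\,e^{-\mu_{k,\omega}t}\leq Cr(I_\omega/t)^{\alpha/(\alpha-1)}e^{t/I_\omega}$, and after optimizing in $t$ the only obstruction is the factor $e^{1/(I_\omega\mu_{k,\omega})}$, which Proposition \ref{prop:first eigenvalue family} uniformly controls via $I_\omega\mu_{1,\omega}\geq c_0$. This is exactly the Donnelly--Li argument behind \eqref{equ:eigenvalues estimates}, specialized to the family setting; indeed, substituting the family Sobolev constants $\beta\sim I_\omega V_\omega^{1/\alpha-1}$, $\gamma/\beta\sim I_\omega^{-1}$, $K=0$ directly into \eqref{equ:eigenvalues estimates} and then invoking Proposition \ref{prop:first eigenvalue family} recovers the same bound, so your direct trace computation and the paper's appeal to \eqref{equ:eigenvalues estimates} are two presentations of one argument.
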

\indent Clearly, Theorem \ref{thm:eigenvalue family} is a generalization of \cite[Corollary 2.1]{GPSS23}.

 \bibliographystyle{alphanumeric}

\end{document}